\documentclass[10pt]{article}

\usepackage{latexsym, amssymb, amsmath, amsthm, amscd}
\usepackage[all,cmtip]{xy}
\usepackage{amsmath}
\usepackage{amsthm}
\usepackage{amssymb}
\usepackage{amsfonts}
\usepackage{amsrefs}
\usepackage{color,authblk}
\usepackage{tikz}
\usepackage{amscd} 
\usepackage{comment}
\usepackage{mathrsfs}
\usepackage{comment}
\usepackage[all]{xy}
\usepackage{graphicx}

\usepackage{authblk}
\usepackage{hyperref}
\usepackage{bbm}
\usepackage{fullpage}
\usepackage[all,cmtip]{xy}
\usepackage{relsize}
\usepackage{amsmath,amscd}
\usepackage{tikz-cd}
\usepackage{tikz}
\usetikzlibrary{matrix}
\usepackage[mathcal]{euscript}
\usepackage{txfonts}

\usepackage{hyperref}

\numberwithin{equation}{section} \DeclareMathSizes{2}{10}{12}{13}
\parindent=0.0in

\textwidth 7in
\textheight 8.2in

\oddsidemargin -0.2in
\evensidemargin -0.5in

\makeatletter
\newcommand*{\doublerightarrow}[2]{\mathrel{
		\settowidth{\@tempdima}{$\scriptstyle#1$}
		\settowidth{\@tempdimb}{$\scriptstyle#2$}
		\ifdim\@tempdimb>\@tempdima \@tempdima=\@tempdimb\fi
		\mathop{\vcenter{
				\offinterlineskip\ialign{\hbox to\dimexpr\@tempdima+1em{##}\cr
					\rightarrowfill\cr\noalign{\kern.5ex}
					\rightarrowfill\cr}}}\limits^{\!#1}_{\!#2}}}
\newcommand*{\triplerightarrow}[1]{\mathrel{
		\settowidth{\@tempdima}{$\scriptstyle#1$}
		\mathop{\vcenter{
				\offinterlineskip\ialign{\hbox to\dimexpr\@tempdima+1em{##}\cr
					\rightarrowfill\cr\noalign{\kern.5ex}
					\rightarrowfill\cr\noalign{\kern.5ex}
					\rightarrowfill\cr}}}\limits^{\!#1}}}
\makeatother

\newtheorem{thm}{Proposition}[section]
\newtheorem{Thm}[thm]{Theorem}

\newtheorem{cor}[thm]{Corollary}
\newtheorem{eg}[thm]{Examples}
\newtheorem{lem}[thm]{Lemma}
\newtheorem{defn}[thm]{Definition}

\title{Eilenberg-Moore categories and quiver representations of monads and comonads}

\author{Divya Ahuja\footnote{Department of Mathematics, Indian Institute of Technology, Delhi. Email: divyaahuja1428@gmail.com.} $\qquad$ Abhishek Banerjee\footnote{Department of Mathematics, Indian Institute of Science, Bangalore. Email: abhishekbanerjee1313@gmail.com.}$\qquad$ Surjeet Kour\footnote{Department of Mathematics, Indian Institute of Technology, Delhi. Email: koursurjeet@gmail.com.} $\qquad $  Samarpita Ray\footnote{Stat-Math Unit, Indian Statistical Institute, Bangalore. Email: ray.samarpita31@gmail.com.}}

\date{}

\begin{document}
	
	\maketitle 

\begin{abstract}
We consider representations of quivers taking values in monads or comonads over a Grothendieck category
		$\mathcal C$.  We treat these as scheme like objects whose ``structure sheaf'' consists of monads or comonads. By using systems of adjoint functors between Eilenberg-Moore categories, we obtain a categorical framework of modules over monad quivers, and of comodules over comonad quivers. Our main objective is to give conditions for these to be Grothendieck categories, which play the role of noncommutative spaces. As with usual ringed spaces, we have to study two kinds of module categories
over a monad quiver. The first behaves like a sheaf of modules over a ringed space. The second consists of modules that are cartesian, which resemble quasi-coherent sheaves. We also obtain an extension of the classical quasi-coherator construction to modules over a monad quiver with values in Eilenberg-Moore categories. We establish similar results for comodules over a comonad quiver. One of our key steps is finding a modulus like bound for an endofunctor $U:\mathcal C\longrightarrow \mathcal C$ in terms of $\kappa(G)$, where $G$ is a generator for $\mathcal C$ and $\kappa(G)$ is a cardinal such that $G$ is $\kappa(G)$-presentable.  Another feature of our paper is that we study modules over a monad quiver in two different orientations, which we refer to as ``cis-modules'' and ``trans-modules.'' We conclude with rational pairings of a monad quiver with a comonad quiver, which relate comodules over a comonad quiver to coreflective subcategories of modules over monad quivers. 
\end{abstract}
	
	\medskip
	{MSC(2020) Subject Classification: 18C20, 18E10}

	\medskip
	{Keywords:} Monad quivers, Comonad quivers, Eilenberg-Moore categories, Grothendieck categories
	
	\medskip
\hypersetup{linktocpage}
	\tableofcontents
	\section{Introduction}
	
	Let $Z$ be a scheme. Then, a famous result of Gabber (see, for instance, \cite[Tag 077P]{Stacks}) shows that the category $QCoh(Z)$ of quasi-coherent sheaves over $Z$ is a Grothendieck category. If $S$ is a scheme and $\mathcal Z$ is an algebraic stack over $S$, the category $QCoh(\mathcal Z)$ of quasi-coherent sheaves over $\mathcal Z$ is also a Grothendieck category (see, for instance, \cite[Tag 06WU]{Stacks}). We can ask similar questions in much more general contexts. For example, let $(\mathcal D,\otimes)$ be a monoidal category having an action 
	$\_\_\otimes \_\_:\mathcal D\times \mathcal L\longrightarrow \mathcal L$ on a Grothendieck category $\mathcal L$. Then if  $Alg(\mathcal D)$ denotes the category of monoid objects
	in $\mathcal D$, we may consider for any $A\in Alg(\mathcal D)$ the category $A-Mod^{\mathcal L}$ of ``left $A$-module objects in $\mathcal L$.'' Then, a morphism in $Alg(\mathcal D)$ induces an adjoint pair of functors between the corresponding module categories with objects in $\mathcal L$. Accordingly, one may set up a theory of quasi-coherent modules, with coefficients in $\mathcal L$, for a representation $\mathcal Y\longrightarrow Alg(\mathcal D)$ of a small category $\mathcal Y$. We can ask for conditions for these quasi-coherent modules to form a Grothendieck category. For instance, if $k$ is a field and $\mathcal L$ is a $k$-linear Grothendieck category, we may consider $R$-module objects in $\mathcal L$ for any $k$-algebra $R$. The latter categories play a key role in the study of noncommutative projective schemes by Artin and Zhang \cite{AZ1}, \cite{AZ2}. 
	
	\smallskip
	In this paper, we develop a categorical framework for studying module representations and comodule representations taking values in Eilenberg-Moore categories of monads and comonads. For this, we generalize the usual setup of sheaves in several different ways. First, we replace the system of affine open subsets of a scheme by a quiver $\mathbb Q=(\mathbb V,\mathbb E)$, i.e., a directed graph $\mathbb Q$ with a set of vertices $\mathbb V$ and a set of edges $\mathbb E$. Our motivation comes from several sources. The first  is that of Estrada and Virili \cite{EV}, who studied modules over a representation $\mathcal A:\mathcal X\longrightarrow Add$ of a small category $\mathcal X$ taking values in small preadditive categories. Another is the classical work of Gerstenhaber and Schack \cite{GS0}, \cite{GS1}, \cite{GS2}, which studied modules and cohomology theory for a diagram of algebras, i.e., a functor from a small category taking values in ordinary algebras.  Similarly, the modules and cohomology theory over prestacks, which are certain kinds of functors from a small category taking values in linear categories, were studied by Lowen and Van den Bergh \cite{LVb}. We also note that modules over diagrams of Lie algebras, i.e., functors from a small category taking values in Lie algebras, were studied further by Gerstenhaber, Giaquinto and Schack in \cite{GGS}. We replace algebras by monads over a given Grothendieck category $\mathcal C$. As such, we consider a representation $\mathscr U:\mathbb Q\longrightarrow Mnd(\mathcal C)$ of the quiver $\mathbb Q$ taking values in the category $Mnd(\mathcal C)$ of monads over $\mathcal C$.  Finally, we replace the usual module categories over rings by Eilenberg-Moore categories of the monads over $\mathcal C$.  Similarly, we can take ``comodule sheaves'' over the quiver $\mathbb Q=(\mathbb V,\mathbb E)$ by starting with a representation 
$\mathscr V:\mathbb Q\longrightarrow Cmd(\mathcal C)$ taking values in the category $Cmd(\mathcal C)$ of comonads over $\mathcal C$. Then, Eilenberg-Moore categories of comonads play the role of comodules over coalgebras.

\smallskip
Our focus is on generators in these representation categories taking values in Eilenberg-Moore categories of monads and comonads. These representation categories of quivers play the role of a scheme like object whose ``structure sheaf'' consists of monads or comonads.  We study conditions for these representation categories to become Grothendieck categories. As mentioned in \cite{WLS}, a Grothendieck category often plays the role of a noncommutative space. We also know from a classical result of Gabriel \cite{Gab} that a scheme may be recovered from the Grothendieck category of its quasi-coherent sheaves. 
Hence, the representation categories that we consider in this paper allow us to develop the theory of ``monadic spaces'' and ``comonadic spaces'' (in place of 
the usual ringed spaces) with the flavor of algebraic geometry. We should also mention here that module valued representations of a small category have been studied at several places in the literature (see, for instance, \cite{f13}, \cite{EE}, \cite{f16}, \cite{f17}).

\smallskip We refer to a representation $\mathscr U:\mathbb Q\longrightarrow Mnd(\mathcal C)$ as a monad quiver. To study modules over $\mathscr U$, we combine techniques on monads and adapt our methods from earlier work in \cite{Ban}, \cite{BBR} which are inspired by the cardinality arguments of Estrada and Virili \cite{EV} (see also Enochs and Estrada \cite{EE}, \cite{f16}). In this respect, one of our key tools in this paper is a modulus like bound that we obtain on an endofunctor. More precisely, we   fix a generator $G$ for the Grothendieck category $\mathcal C$. Then, for any object
	$M\in \mathcal C$, the set $el_G(M):=\mathcal C(G,M)$ plays the role of elements of $M$ and we put  $||M||^G:=|\mathcal C(G,M)|$. We choose $\kappa(G)$ such that $G\in \mathcal C$ is 
	$\kappa(G)$-presentable. Our first main step is to give a modulus like bound on an endofunctor $U:\mathcal C\longrightarrow \mathcal C$, i.e., a cardinal $\lambda^U$ (which depends on the generator $G$) such that for any object $M\in \mathcal C$ we have
	\begin{equation}
		||UM||^G\leq \lambda^U\times  (||M||^G)^{\kappa(G)}
	\end{equation} If $\phi:U\longrightarrow U'$ is a morphism of monads over $\mathcal C$, under certain conditions, there is a system of functors (see Section 4)
	\begin{equation}
		\label{1.2fd} \phi^*:EM_{U}\longrightarrow EM_{U'}\qquad \phi_*:EM_{U'}\longrightarrow EM_U\qquad \hat\phi:EM_U\longrightarrow EM_{U'}
	\end{equation} between  Eilenberg-Moore categories $EM_U$ and $EM_{U'}$ of $U$ and $U'$ respectively. Here, $(\phi^*,\phi_*)$ and 
$(\phi_*,\hat\phi)$ are pairs of adjoint functors. For a monad quiver  $\mathscr U:\mathbb Q\longrightarrow Mnd(\mathcal C)$, we use the system of functors in 
\eqref{1.2fd} to define $\mathscr U$-modules of two different orientations. By adapting the terminology from \cite{BBR}, we refer to these as ``cis-modules'' and ``trans-modules.'' More precisely, a cis-module $\mathscr M$ over $\mathscr U$ consists of a family of objects $\{\mathscr M_x\in EM_{\mathscr U_x}\}_{x\in Ob(\mathbb Q)}$ along with compatible morphisms $\mathscr M^\alpha:\mathscr U(\alpha)^*\mathscr M_x\longrightarrow \mathscr M_y$ (equivalently, $\mathscr M_\alpha:
	\mathscr M_x\longrightarrow \mathscr U(\alpha)_*\mathscr M_y$) for each edge $\alpha\in \mathbb Q(x,y)$ (see Definition \ref{D4.2}).  A trans-module $\mathscr M$ over $\mathscr U$ consists of  objects $\{\mathscr M_x\in EM_{\mathscr U_x}\}_{x\in Ob(\mathbb Q)}$ along with compatible morphisms ${^\alpha}\mathscr M:\mathscr U(\alpha)_*\mathscr M_y\longrightarrow \mathscr M_x$ (equivalently, ${_\alpha}\mathscr M:
	\mathscr M_y\longrightarrow \hat{\mathscr U(\alpha)}\mathscr M_x$) for each edge $\alpha\in \mathbb Q(x,y)$ (see Definition \ref{D4.10}).  We give conditions for the categories $Mod^{cs}-\mathscr U$ and $Mod^{tr}-\mathscr U$ of cis-modules and trans-modules respectively to be  Grothendieck categories, and also conditions for  them to have projective generators.
	
	\smallskip The categories $Mod^{cs}-\mathscr U$ and $Mod^{tr}-\mathscr U$ resemble categories of sheaves of modules over a ringed space. As with a ringed space, we also consider modules over the representation $\mathscr U:\mathbb Q\longrightarrow Mnd(\mathcal C)$ which behave like quasi-coherent sheaves.  Accordingly, we consider the full subcategory $Mod^{cs}_c-\mathscr U$  of $Mod^{cs}-\mathscr U$ consisting of objects which are cartesian, i.e., cis-modules $\mathscr M$ for which the morphisms  $\mathscr M^\alpha:\mathscr U(\alpha)^*\mathscr M_x\longrightarrow \mathscr M_y$  are isomorphisms for each edge $\alpha\in  \mathbb Q(x,y)$.  We give conditions for 
	$Mod^{cs}_c-\mathscr U$ to be a Grothendieck category. In that case, the canonical inclusion $Mod^{cs}_c-\mathscr U\hookrightarrow Mod^{cs}-\mathscr U$ has a right adjoint. As such, we  have a generalization of the classical quasi-coherator construction (see \cite[Lemme 3.2]{Ill}) to modules over a monad quiver with values in Eilenberg-Moore categories. We prove similar results for the full subcategory $Mod^{tr}_c-\mathscr U$  of cartesian objects in $Mod^{tr}-\mathscr U$, i.e., trans-modules  $\mathscr M$ for which  ${_\alpha}\mathscr M:
	\mathscr M_y\longrightarrow \hat{\mathscr U(\alpha)}\mathscr M_x$  is an isomorphism for each edge $\alpha\in  \mathbb Q(x,y)$. 

\smallskip
We then consider a comonad quiver $\mathscr V:\mathbb Q\longrightarrow Cmd(\mathcal C)$. If $\phi:V\longrightarrow V'$ is a morphism of comonads, there is a pair $(\phi^\circ,\phi_\circ)$ of adjoint functors
\begin{equation}\label{1.3fdt}
\phi^\circ: EM^V\longrightarrow EM^{V'} \qquad \phi_\circ: EM^{V'}\longrightarrow EM^V 
\end{equation} between  Eilenberg-Moore categories $EM^V$ and $EM^{V'}$ of $V$ and $V'$ respectively. We study comodules over the comonad quiver $\mathscr V$. We mention that unlike the system of functors \eqref{1.2fd} in the case of monads, we do not know of good conditions for the functor 
$\phi_\circ: EM^{V'}\longrightarrow EM^V $ appearing in \eqref{1.3fdt} to have a right adjoint. Accordingly, we only work with cis-comodules over the comonad quiver $\mathscr V$. 

\smallskip More precisely, a cis-comodule $\mathscr M$ over $\mathscr V$ consists of a family of objects $\{\mathscr M_x\in EM^{\mathscr V_x}\}_{x\in Ob(\mathbb Q)}$ along with compatible morphisms $\mathscr M^\alpha:\mathscr V(\alpha)^\circ\mathscr M_x\longrightarrow \mathscr M_y$ (equivalently, $\mathscr M_\alpha:
	\mathscr M_x\longrightarrow \mathscr V(\alpha)_\circ\mathscr M_y$) for each edge $\alpha\in \mathbb Q(x,y)$ (see Definition \ref{D7.3}).  We give conditions for the category $Com^{cs}-\mathscr V$ of cis-comodules to be a Grothendieck category and for it to have projective generators. 
 We mention here an important distinction between Eilenberg-Moore categories of monads and those of comonads, because of which we have to modify our cardinality arguments for treating 
cis-modules in order to apply them to the category $Com^{cs}-\mathscr V$ of cis-comodules. Suppose that $G$ is a generator for $\mathcal C$ that is $\kappa(G)$-presentable. Then if $U$ is a monad on $\mathcal C$ satisfying certain conditions, the object $UG$ becomes a generator for the Eilenberg-Moore category $EM_U$ that is also 
$\kappa(G)$-presentable (see Lemma \ref{L3.2}). However,  
if $V$ is a comonad, $VG$ need not be a generator for the Eilenberg-Moore category $EM^V$.

\smallskip
We will say that a cis-comodule $\mathscr M$ over $\mathscr V$ is cartesian if the morphism $\mathscr M_\alpha:
	\mathscr M_x\longrightarrow \mathscr V(\alpha)_\circ\mathscr M_y$ is an isomorphism for each edge $\alpha\in \mathbb Q(x,y)$. We give conditions 
for the full subcategory $Com^{cs}_c-\mathscr V$ of cartesian cis-comodules to be a Grothendieck category. We also obtain an analogue for the classical quasi-coherator construction in this context, i.e., a right adjoint to the inclusion functor $Com^{cs}_c-\mathscr V\hookrightarrow Com^{cs}-\mathscr V$.

\smallskip
In the final section of this paper, we relate trans-modules over a monad quiver $\mathscr U:\mathbb Q^{op}\longrightarrow Mnd(\mathcal C)$ to cis-comodules over a comonad quiver $\mathscr V:\mathbb Q\longrightarrow Cmd(\mathcal C)$. We do this by defining a rational pairing between monad quivers and comonad quivers (see Definition \ref{D10.2}). For this, we use the theory of rational pairings between adjoint functors developed by Mesablishvili and Wisbauer \cite{MW}. We recall the well known fact that if $C\otimes A\longrightarrow k$ is a rational pairing between a $k$-coalgebra $C$ and a $k$-algebra $A$ (where $k$ is a field), then $C$-comodules correspond to rational modules over $A$ (see, for instance, \cite[$\S$ 4.18]{BWb}). We have considered rational pairings between 
quivers of algebras and quivers of coalgebras in \cite{BBR}. If we have a rational pairing $\mathcal P$ between a monad quiver $\mathscr U$ and a comonad quiver $\mathscr V$, we define the full subcategory $Rat_{\mathcal P}^{tr}-\mathscr U \hookrightarrow Mod^{tr}-\mathscr U$ of rational trans-modules such that the inclusion has a right adjoint
\begin{equation}
\iota_{\mathcal P}^{tr} :Rat_{\mathcal P}^{tr}-\mathscr U \longrightarrow Mod^{tr}-\mathscr U\qquad Rat_{\mathcal P}^{tr}: Mod^{tr}-\mathscr U\longrightarrow Rat_{\mathcal P}^{tr}-\mathscr U
\end{equation} In that case, we show that $Com^{cs}-\mathscr V\cong Rat_{\mathcal P}^{tr}-\mathscr U$. We conclude by giving conditions for the subcategory $Rat_{\mathcal P}^{tr}-\mathscr U$ of rational trans-modules to form a hereditary torsion class in $Mod^{tr}-\mathscr U$. 
	
\smallskip
{\bf Acknowledgements:} This paper will form part of the PhD thesis of the first named author. Divya Ahuja was supported by CSIR PhD fellowship 09/086(1430)/2019-EMR-I. Authors Abhishek Banerjee and Surjeet Kour were partially supported by SERB Core Research Grant 2023/004143.  Samarpita Ray was supported by 
Faculty Fellowship DST/INSPIRE/04/2021/002904.

	\section{Generators and the bound on an endofunctor}

	Throughout this section and the rest of this paper, we assume that $\mathcal C$ is a Grothendieck category. We begin by recalling the following standard definition. 
	
	\begin{defn}\label{D2.1} (see \cite[$\S$ 1.13]{AR}). Let $\kappa$ be a regular cardinal. A partially ordered set $J$ is said to be $\kappa$-directed if every subset of $J$ having cardinality $<\kappa$ has an upper bound in $J$.
		An object $M\in \mathcal C$ is said to be $\kappa$-presentable if the functor $\mathcal C(M,\_\_)$ preserves $\kappa$-directed colimits.
	\end{defn}
	
	From Definition \ref{D2.1} it is clear that if $J$ is partially ordered set that is $\kappa$-directed, then it is also $\kappa'$-directed for any regular cardinal
	$\kappa'\leq \kappa$.  Accordingly,  if an object $M\in  \mathcal C$ is $\kappa$-presentable, then $M$ is also $\kappa''$-presentable for any regular cardinal
	$\kappa''\geq \kappa$.

	\smallskip We now fix a generator $G$ for $\mathcal C$. Because $\mathcal C$ is a  Grothendieck category, it is also locally presentable (see, for instance, \cite[Proposition 3.10]{Beke}) and it follows in particular that for each object $M\in  \mathcal C$ we can choose $\kappa(M)$ such that $M$ is $\kappa(M)$-presentable.  We choose therefore $\kappa(G)$ such that $G$ is
	$\kappa(G)$-presentable. By the above reasoning, we may suppose that $\kappa(G)$ is infinite.
	
	\smallskip
	For each $M\in \mathcal C$, we now define
	\begin{equation}\label{cardM}
		el_G(M):=\mathcal C(G,M)\qquad ||M||^{G}:=|\mathcal C(G,M)|
	\end{equation} From \eqref{cardM}, it is immediately clear that if $M'\hookrightarrow M$ is a monomorphism in $\mathcal C$, then 
	$||M'||^{ G}\leq ||M||^{ G}$. For the rest of this paper, we will assume that the generator $G$  is such that
	for any epimorphism $M\twoheadrightarrow M''$ in $\mathcal C$, we must have $||M''||^{ G}\leq ||M||^{ G}$. This would happen, for instance, if  $G$ were projective.
	
	\smallskip
	For a set $S$ and a regular cardinal $\alpha$, we denote by $\mathcal P_\alpha(S)$ the collection of subsets of $S$ having cardinality $<\alpha$. Since $\alpha$ is regular, we note that $\mathcal P_\alpha(S)$ is $\alpha$-directed. If $\{M_s\}_{s\in S}$ is a collection of objects of $\mathcal C$ indexed by $S$ and $T\subseteq S$ is any subset, we denote by $M_{T}$
	the direct sum $M_{T}:=\underset{s\in T}{\bigoplus}M_s$. 
	
	\begin{lem}\label{L2.1} Let $\{M_s\}_{s\in S}$ be a family of objects in $\mathcal C$. Let $\lambda$, $\mu\geq \aleph_0$ be cardinals such that
		\begin{equation} \lambda\geq \mbox{max$\{\mbox{$|S|$, $\kappa(G)$}\}$} \qquad  \mu\geq \mbox{sup$\{\mbox{ $||M_s||^G$, $s\in S$}\}$}
		\end{equation} Then, $||\underset{s\in S}{\bigoplus}M_s||^{ G}\leq
		\mu^{\kappa(G)}\times \lambda^{\kappa(G)}$.
	\end{lem}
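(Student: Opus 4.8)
The goal is to bound $||\bigoplus_{s\in S} M_s||^G = |\mathcal{C}(G, \bigoplus_{s\in S} M_s)|$. The key observation is that since $G$ is $\kappa(G)$-presentable and $\kappa(G)$ is regular and infinite, any morphism $G \to \bigoplus_{s\in S} M_s$ factors through a "small" sub-coproduct. More precisely, write the coproduct $\bigoplus_{s \in S} M_s$ as the $\kappa(G)$-directed colimit of the partial sums $M_T = \bigoplus_{s \in T} M_s$ indexed by $T \in \mathcal{P}_{\kappa(G)}(S)$ (this is genuinely a $\kappa(G)$-directed poset since $\kappa(G)$ is regular, as noted in the excerpt, and its colimit is the full coproduct because coproducts commute with directed colimits of subsets). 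Since $G$ is $\kappa(G)$-presentable, the canonical map
\[
\underset{T \in \mathcal{P}_{\kappa(G)}(S)}{\mathrm{colim}}\ \mathcal{C}(G, M_T) \longrightarrow \mathcal{C}\Bigl(G, \underset{s\in S}{\bigoplus} M_s\Bigr)
\]
is a bijection. Hence every element of $\mathcal{C}(G, \bigoplus_s M_s)$ is represented by some element of some $\mathcal{C}(G, M_T)$ with $|T| < \kappa(G)$, and therefore
\[
\Bigl|\mathcal{C}\Bigl(G, \underset{s\in S}{\bigoplus} M_s\Bigr)\Bigr| \leq \sum_{T \in \mathcal{P}_{\kappa(G)}(S)} |\mathcal{C}(G, M_T)| \leq |\mathcal{P}_{\kappa(G)}(S)| \times \sup_{T} \, ||M_T||^G .
\]

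**Estimating the two factors.** For the first factor, $|\mathcal{P}_{\kappa(G)}(S)| \leq |S|^{\kappa(G)} \leq \lambda^{\kappa(G)}$, using $\lambda \geq |S|$ and $\lambda \geq \kappa(G)$ (so $\lambda^{\kappa(G)}$ absorbs the count of subsets of size $< \kappa(G)$; one can be slightly more careful and write $|\mathcal P_{\kappa(G)}(S)| \le |S|^{<\kappa(G)} \le \lambda^{\kappa(G)}$). For the second factor, I need to bound $||M_T||^G = |\mathcal{C}(G, M_T)|$ for a fixed $T$ with $|T| < \kappa(G)$. Running the same $\kappa(G)$-presentability argument one level down is circular, so instead I bound $\mathcal{C}(G, M_T)$ directly: a morphism $G \to M_T = \bigoplus_{s \in T} M_s$, composed with each projection $M_T \to M_s$, gives a family $(G \to M_s)_{s \in T}$, and this assignment $\mathcal{C}(G, M_T) \hookrightarrow \prod_{s \in T} \mathcal{C}(G, M_s)$ is injective (the projections are jointly monic on a coproduct in an abelian/additive category, or more simply $\bigoplus \to \prod$ is monic here and one composes). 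Wait — more carefully, since $\mathcal{C}$ is Grothendieck the canonical map $\bigoplus_{s\in T} M_s \to \prod_{s \in T} M_s$ is a monomorphism, so $\mathcal{C}(G, M_T) \hookrightarrow \mathcal{C}(G, \prod_{s\in T} M_s) = \prod_{s \in T} \mathcal{C}(G, M_s)$. Therefore
\[
||M_T||^G \leq \prod_{s \in T} ||M_s||^G \leq \mu^{|T|} \leq \mu^{\kappa(G)},
\]
using $|T| < \kappa(G)$ and $\mu \geq \sup_s ||M_s||^G \geq 1$ (here I should observe $\mathcal{C}(G,M_s)$ is nonempty — it contains the zero morphism — so each $||M_s||^G \geq 1$ and the product is genuinely $\leq \mu^{|T|}$; one may also need $\mu \ge 2$, or just fold a harmless factor, since if some $M_s = 0$ the bound $\mu^{|T|}$ still holds as $1 \le \mu^{|T|}$). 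Combining, $||\bigoplus_{s\in S} M_s||^G \leq \lambda^{\kappa(G)} \times \mu^{\kappa(G)}$, which is exactly the claimed bound.

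**Main obstacle.** The conceptual core — factoring $G \to \bigoplus M_s$ through a small sub-coproduct via $\kappa(G)$-presentability — is clean and is the real content. The part requiring the most care is the cardinal bookkeeping at the two points flagged above: (i) that $|\mathcal{P}_{\kappa(G)}(S)| \leq \lambda^{\kappa(G)}$ (which needs $\lambda \ge |S|$ and $\lambda \ge \kappa(G)$, both given, plus the standard fact $|S|^{<\kappa} \le |S|^{\kappa}$), and (ii) the passage from $M_T$ as a coproduct to the product bound $\prod_{s\in T}||M_s||^G$, which relies on the Grothendieck hypothesis that $\bigoplus \hookrightarrow \prod$ is monic together with $||{-}||^G$ being monotone under monomorphisms (noted in the excerpt right after \eqref{cardM}). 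Neither is deep, but one should be mildly attentive to the edge cases where some $\mathcal{C}(G, M_s)$ is a singleton (i.e. $M_s = 0$), to the infinitude of $\lambda, \mu, \kappa(G)$ being used to tame small exponents, and — if one prefers the one-step argument of replacing $M_T$ by its image under $\bigoplus \to \prod$ — to the fact that $||{-}||^G$ only increases under the monomorphism. I do not expect to need the epimorphism hypothesis on $G$ here; it is monomorphisms and presentability that do the work.
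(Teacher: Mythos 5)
Your proof is correct and follows essentially the same route as the paper: write $\bigoplus_{s\in S}M_s$ as the $\kappa(G)$-directed colimit of the sub-coproducts $M_T$, use $\kappa(G)$-presentability of $G$ to identify $\mathcal C(G,M_S)$ with the colimit of the $\mathcal C(G,M_T)$, bound each $||M_T||^G\leq\mu^{\kappa(G)}$ via the monomorphism into the product, and bound the number of indices by $|S|^{\kappa(G)}\leq\lambda^{\kappa(G)}$. The only cosmetic difference is that the paper realizes the colimit of hom-groups as a quotient of their direct sum in abelian groups, while you bound it by the disjoint union of representatives; the cardinality estimate is the same.
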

	
	\begin{proof}
		We consider a subset $T\in \mathcal P_{\kappa(G)}(S)$. Then, we have
		\begin{equation}\label{2.2dq}
			||M_T||^G=|\mathcal C(G,M_T)|\leq \left\vert \mathcal C\left(G,\underset{s\in T}{\prod}M_s\right)\right\vert=\left\vert\underset{s\in T}{\prod}\mathcal C(G,M_s)\right\vert\leq \mu^{|T|}\leq \mu^{\kappa(G)}
		\end{equation} We now note that the direct sum $M_S=\underset{s\in S}{\bigoplus}M_s$ may be expressed as the colimit $\underset{T\in \mathcal P_{\kappa(G)}(S)}{\varinjlim}M_T$. Since this colimit is $\kappa(G)$-directed and $G$ is $\kappa(G)$-presentable, we now have
		\begin{equation}\label{2.3eq}
			||M_S||^G=|\mathcal C(G,M_S)|=\left\vert\mathcal C\left(G,\underset{T\in \mathcal P_{\kappa(G)}(S)}{\varinjlim}M_T\right)\right\vert=\left\vert\underset{T\in \mathcal P_{\kappa(G)}(S)}{\varinjlim}\mathcal C(G,M_T)\right\vert
		\end{equation} Since there is an epimorphism 
		$
		\underset{T\in \mathcal P_{\kappa(G)}(S)}{\bigoplus}\mathcal C(G,M_T)\twoheadrightarrow \underset{T\in \mathcal P_{\kappa(G)}(S)}{\varinjlim}\mathcal C(G,M_T)
		$ in the category of abelian groups, it follows from \eqref{2.2dq} and \eqref{2.3eq} that
		\begin{equation}
			||M_S||^G\leq\left\vert \underset{T\in \mathcal P_{\kappa(G)}(S)}{\bigoplus}\mathcal C(G,M_T) \right\vert \leq  \mu^{\kappa(G)}\times \lambda^{\kappa(G)}
		\end{equation}  The last inequality follows from the fact that $|\mathcal P_{\kappa(G)}(S)|\leq |\mathcal P_{{\kappa(G)}^+}(S)|=|S|^{\kappa(G)}\leq \lambda^{\kappa(G)}$, where ${\kappa(G)}^+$ is the successor of $\kappa(G)$ 
		(see, for instance, \cite[$\S$ 8.2]{Hod}).
	\end{proof}
	
	\begin{Thm}\label{T2.2} Let $U:\mathcal C\longrightarrow \mathcal C$ be an endofunctor that preserves colimits. Let $\lambda^U:={(||UG||^G)}^{\kappa(G)}\times \kappa(G)^{\kappa(G)}$. Then, 
		$||UM||^G\leq \lambda^U\times  (||M||^G)^{\kappa(G)}$ for any object $M\in \mathcal C$. 
		
	\end{Thm}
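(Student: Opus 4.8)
The plan is to exploit the fact that $G$ is a generator in order to present an arbitrary object $M$ as a quotient of a coproduct of copies of $G$, then transport this presentation through $U$, and finally feed the resulting coproduct into Lemma \ref{L2.1}. The only real work beyond this will be cardinal bookkeeping.

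First I would write $S:=el_G(M)=\mathcal C(G,M)$, so that $|S|=\|M\|^G$, and use the canonical morphism $\varepsilon_M\colon\bigoplus_{s\in S}G\longrightarrow M$ whose component at the index $s\in\mathcal C(G,M)$ is the morphism $s\colon G\to M$ itself. Since $G$ generates the Grothendieck category $\mathcal C$, the morphism $\varepsilon_M$ is an epimorphism. Because $U$ preserves all colimits it is in particular additive, it preserves arbitrary coproducts, and it preserves cokernels; hence it sends epimorphisms to epimorphisms and commutes with the coproduct. Applying $U$ to $\varepsilon_M$ therefore produces an epimorphism
\[
U\varepsilon_M\colon\ \bigoplus_{s\in S}UG\ =\ U\!\left(\bigoplus_{s\in S}G\right)\ \twoheadrightarrow\ UM .
\]
By our standing hypothesis on $G$ (that epimorphisms do not increase $\|\cdot\|^G$), this gives $\|UM\|^G\le\big\|\bigoplus_{s\in S}UG\big\|^G$.

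Next I would apply Lemma \ref{L2.1} to the constant family $\{M_s:=UG\}_{s\in S}$, with the choices $\mu:=\max\{\|UG\|^G,\aleph_0\}$ and $\lambda:=\max\{\|M\|^G,\kappa(G)\}$; here $\lambda\ge\aleph_0$ because $\kappa(G)\ge\aleph_0$, and $\mu,\lambda$ clearly satisfy the hypotheses of the lemma (note $\sup\{\|M_s\|^G:s\in S\}=\|UG\|^G$ and $\max\{|S|,\kappa(G)\}=\lambda$). The lemma yields
\[
\|UM\|^G\ \le\ \Big\|\bigoplus_{s\in S}UG\Big\|^G\ \le\ \mu^{\kappa(G)}\times\lambda^{\kappa(G)} .
\]
It then remains to check that the right-hand side is bounded by $\lambda^U\times(\|M\|^G)^{\kappa(G)}$. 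Using $\|UG\|^G\ge1$ together with $\aleph_0^{\kappa(G)}\le 2^{\kappa(G)}\le\kappa(G)^{\kappa(G)}$, one gets $\mu^{\kappa(G)}\le(\|UG\|^G)^{\kappa(G)}\times\kappa(G)^{\kappa(G)}=\lambda^U$; similarly, using $\|M\|^G\ge1$, one gets $\lambda^{\kappa(G)}\le(\|M\|^G)^{\kappa(G)}\times\kappa(G)^{\kappa(G)}$. Multiplying these and absorbing the extra factor $\kappa(G)^{\kappa(G)}$ into the infinite cardinal $\lambda^U$ (of which it is a factor) finishes the argument.

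I expect the genuinely delicate point to be precisely this last piece of cardinal arithmetic: because $\|UG\|^G$ or $\|M\|^G$ may be finite, one cannot simply invoke $\max\{a,b\}=a\cdot b$ or $a+b=\max\{a,b\}$, and one must keep track of the degenerate small cases so that the stray factors of $\aleph_0$ and $\kappa(G)^{\kappa(G)}$ really do get swallowed by $\lambda^U$. The conceptual core — presenting $M$ as an epimorphic image of $\bigoplus_{el_G(M)}G$ and pushing it through the colimit-preserving $U$ — and the reduction to Lemma \ref{L2.1} are routine once this is arranged; one should also record explicitly the small observation that ``$U$ preserves colimits'' forces $U$ to preserve both epimorphisms and coproducts, since that is what makes the displayed epimorphism legitimate.
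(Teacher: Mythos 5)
Your proposal is correct and follows essentially the same route as the paper's proof: present $M$ as an epimorphic image of $G^{(\mathcal C(G,M))}$, push through the colimit-preserving $U$, and apply Lemma \ref{L2.1} with exactly the same choices $\mu=\max\{\|UG\|^G,\aleph_0\}$ and $\lambda=\max\{\kappa(G),\|M\|^G\}$. The only difference is that you spell out the final cardinal arithmetic (absorbing $\aleph_0^{\kappa(G)}$ and the extra $\kappa(G)^{\kappa(G)}$ into $\lambda^U$) more explicitly than the paper, which simply notes that the result is clear since $\kappa(G)$ is infinite.
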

	
	\begin{proof}
		Since $G$ is a generator, we know that for any $M\in \mathcal C$, the canonical morphism  $G^{\mathcal C(G,M)}\longrightarrow M$ is an epimorphism. Since $U$ preserves colimits, it follows that we have an epimorphism $(UG)^{\mathcal C(G,M)}\twoheadrightarrow UM$ in $\mathcal C$. By the assumption on the generator $G$, it follows that $||UM||^G\leq \left\vert\left\vert (UG)^{\mathcal C(G,M)}\right\vert\right\vert^G$.
		Applying Lemma \ref{L2.1} with $\mu=\mbox{max$\{||UG||^G,\aleph_0\}$}$ and $\lambda=\mbox{max$\{\kappa(G), ||M||^G\}$}$, we have
		\begin{equation}
			||UM||^G\leq \left\vert\left\vert (UG)^{\mathcal C(G,M)}\right\vert\right\vert^G\leq \mu^{\kappa(G)}\times \lambda^{\kappa(G)}\leq {(||UG||^G)}^{\kappa(G)}\times \kappa(G)^{\kappa(G)}\times  \aleph_0^{\kappa(G)} \times  (||M||^G)^{\kappa(G)}
		\end{equation} Since $\kappa(G)$ is infinite, the result is now clear. 
	\end{proof}

	\section{Generators in  Eilenberg-Moore categories} 
	
	We continue with $\mathcal C$ being a Grothendieck category. By definition, a monad $(U,\theta,\eta)$ on $\mathcal C$ is a triple consisting of an endofunctor $U:\mathcal C\longrightarrow \mathcal C$ and natural transformations $\theta: U\circ U\longrightarrow U$, $\eta: 1_{\mathcal C}
	\longrightarrow U$ satisfying associativity and unit conditions similar to usual multiplication. A module $(M,f_M)$ over $(U,\theta,\eta)$ consists of  $M\in \mathcal C$ and a morphism  $f_M:UM\longrightarrow M$ in $\mathcal C$ such that the following compatibilities hold.
	\begin{equation}\label{3.1cq}
		f_M\circ\theta M= f_M\circ Uf_M~~\textup{and}~~f_M\circ\eta M= 1_M
	\end{equation}
	A morphism $g:(M,f_M)\longrightarrow (M',f_{M'})$ of $(U,\theta,\eta)$-modules is given by $g:M\longrightarrow M'$ in $\mathcal C$ such that 
	$f_{M'}\circ Ug= g\circ f_M$.  This gives the standard Eilenberg-Moore category of modules over the monad   $(U,\theta,\eta)$  and we denote it by $EM_U$. When there is no danger of confusion, an object $(M,f_M)\in EM_U$ will often be denoted simply by $M$.
	
	\smallskip For any object $M\in \mathcal C$, we note that $(UM,\theta M:U^2M\longrightarrow UM)$ carries the structure of a module over $(U,\theta,\eta)$. Further, it is well known (see, for instance, \cite{Mac}) that there is an adjunction of functors, given by  natural isomorphisms
	\begin{equation}\label{monadj}
		EM_U(UM,N)\cong \mathcal C(M,N)
	\end{equation}
	for $M\in \mathcal C$ and $N\in EM_U$.
	
	\smallskip
	\begin{thm}\label{L3.1}
		Let $(U,\theta,\eta)$ be a monad on $\mathcal C$ such that $U$ is exact and preserves colimits. Then, $EM_U$ is a Grothendieck  category.  If $\mathcal C$ has a projective generator, so does $EM_U$.
		Further, if $\{M_i\}_{i\in I}$ is any system (resp. any finite system) of objects in $EM_U$, the colimit (resp. the finite limit) in $EM_U$ is defined by taking $\underset{i\in I}{colim}\textrm{ }M_i$ (resp.  $\underset{i\in I}{lim}\textrm{ }M_i$) in $\mathcal C$. 
	\end{thm}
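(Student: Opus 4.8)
The plan is to exploit the adjunction $\mathcal C(M,\_\_) \cong EM_U(UM,\_\_)$ together with the fact that $EM_U$ is a category of algebras for a colimit-preserving monad. First I would recall the standard fact that the forgetful functor $V : EM_U \longrightarrow \mathcal C$ creates all limits, and that it creates those colimits which $U$ preserves; since $U$ preserves all colimits by hypothesis, $V$ creates all colimits, so $EM_U$ is cocomplete and complete, and colimits (resp. limits) in $EM_U$ are computed by taking the underlying colimit (resp. limit) in $\mathcal C$ and equipping it with the canonical $U$-action. This immediately gives the last sentence of the statement. Exactness of $V$ (it creates kernels and cokernels) together with exactness of $U$ shows that a short exact sequence in $EM_U$ is exact iff its underlying sequence in $\mathcal C$ is; combined with the previous point, filtered colimits in $EM_U$ are computed in $\mathcal C$ and hence are exact, so axiom AB5 transfers from $\mathcal C$ to $EM_U$. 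One also needs additivity of $EM_U$: since $U$ is additive (it preserves the zero object and finite coproducts, being a colimit-preserving endofunctor of an additive category), $EM_U$ is an additive — indeed abelian — category.

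Next I would produce a generator. The natural candidate is $UG$, where $G$ is a generator of $\mathcal C$: for any $N \in EM_U$ the adjunction gives $EM_U(UG,N)\cong \mathcal C(G,N)$ naturally, and since $G$ detects whether a morphism in $\mathcal C$ is zero (equivalently an epi, in a Grothendieck category), and morphisms in $EM_U$ are detected on underlying objects, $UG$ detects epimorphisms in $EM_U$; hence $UG$ is a generator. More concretely, for any $N\in EM_U$ the counit-type map $(UG)^{(\mathcal C(G,N))}\longrightarrow N$ in $EM_U$ corresponds under the adjunction to the canonical epimorphism $G^{(\mathcal C(G,N))}\longrightarrow N$ in $\mathcal C$, which is epi; since $V$ reflects epimorphisms, this is an epimorphism in $EM_U$. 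This establishes that $EM_U$ is a Grothendieck category.

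For the projective generator statement: if $P$ is a projective generator of $\mathcal C$, I claim $UP$ is a projective generator of $EM_U$. It is a generator by the argument just given (replacing $G$ by $P$). For projectivity, note $EM_U(UP,\_\_)\cong \mathcal C(P,V(\_\_))$ as functors $EM_U\to \mathbf{Ab}$; the functor $V$ is exact (it preserves and reflects short exact sequences, as noted above) and $\mathcal C(P,\_\_)$ is exact since $P$ is projective, so the composite is exact, i.e.\ $UP$ is projective in $EM_U$.

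The main obstacle I anticipate is the careful verification that $V$ creates the colimits preserved by $U$ — this is the classical result (\cite{Mac}, or the monadicity formalism) that underlies everything, and one must be slightly careful that "$U$ preserves colimits" is used in the form needed to build the algebra structure on a colimit of underlying objects, using that $U^2$ also preserves the relevant colimits. Once that is in place, the AB5 property, the existence of the generator $UG$, and the projective generator claim all follow formally from the adjunction \eqref{monadj} and exactness of $V$, with essentially no extra work.
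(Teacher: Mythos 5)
Your proposal is correct and takes essentially the same route as the paper: the paper likewise computes colimits and finite limits of algebras on underlying objects in $\mathcal C$ (constructing the $U$-action on kernels and cokernels directly from the exactness and colimit-preservation of $U$ rather than citing the creation-of-(co)limits formalism), exhibits $(UG,\theta G)$ as a generator via the epimorphism $(UG)^{(X)}\to UM\to M$ adjoint to $G^{(X)}\twoheadrightarrow M$, and deduces projectivity from the exactness of $EM_U((UG,\theta G),\_\_)\cong\mathcal C(G,\_\_)$.
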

	\begin{proof} Let $g:(M,f_M)\longrightarrow (N,f_N)$ be a morphism in $EM_U$. We set
		\begin{equation}
			K:=Ker(g:M\longrightarrow N)\qquad L=Coker(g:M\longrightarrow N)
		\end{equation} Since $U$ is exact, it is clear that we have induced morphisms $f_K:UK\longrightarrow K$ and $f_L:UL\longrightarrow L$ defining objects
		$(K,f_K)$, $(L,f_L)\in EM_U$. It follows that $EM_U$ contains kernels and cokernels and that $Ker((N,f_N)\longrightarrow Coker(g))=Coker(Ker(g)
		\longrightarrow (M,f_M))$. This makes $EM_U$ an abelian category.  Since $U$ is exact and preserves colimits, we see that $U$ can be used to determine both colimits and finite limits in $EM_U$, and that  $EM_U$ satisfies the (AB5) axiom.

		\smallskip
		Let $G$ be a generator for $\mathcal C$ and let $(M,f_M)\in EM_U$.  We choose 
		an epimorphism $p:G^{(X)}\longrightarrow M$ in $\mathcal C$ from a direct sum of copies of $G$. Since $U$ preserves colimits,  $Up: UG^{(X)}=(UG)^{(X)}\longrightarrow UM$ is an epimorphism in $EM_U$. Additionally, it is clear from the condition $f_M\circ\eta_M= 1_M$  in \eqref{3.1cq} that $f_M:UM\longrightarrow M$ is an epimorphism in $EM_U$. Therefore, $f_M\circ Up: UG^{(X)}=(UG)^{(X)}\longrightarrow UM\longrightarrow M$ is an epimorphism in $EM_U$ and it follows  that $(UG,\theta G)$ is a generator for $EM_U$. Finally, if $G\in \mathcal C$ is projective, then $EM_U((UG,\theta G),\_\_)\cong \mathcal C(G,\_\_)$ is exact and $(UG,\theta G)$ becomes projective in $EM_U$.
	\end{proof}

	\begin{lem}\label{L3.2}
		Suppose that $(U,\theta,\eta)$ is  a monad on $\mathcal C$ which is exact and preserves colimits.  Let $M\in \mathcal C$ be an object and suppose that $M$ is $\kappa(M)$-presentable as an object of $\mathcal C$. Then, $(UM,\theta M)$ is $\kappa(M)$-presentable as an object of $EM_U$. 
	\end{lem}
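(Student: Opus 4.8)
The plan is to exploit the adjunction \eqref{monadj}, namely $EM_U(UM,N)\cong\mathcal C(M,N)$ for $N\in EM_U$, which says precisely that the free functor $F:\mathcal C\to EM_U$, $F(M)=(UM,\theta M)$, is left adjoint to the forgetful functor $V:EM_U\to\mathcal C$. Thus $EM_U((UM,\theta M),\_\_)\cong\mathcal C(M,V(\_\_))=\mathcal C(M,\_\_)\circ V$. To show $(UM,\theta M)$ is $\kappa(M)$-presentable in $EM_U$, I must show this composite functor preserves $\kappa(M)$-directed colimits. Since $M$ is $\kappa(M)$-presentable in $\mathcal C$ by hypothesis, $\mathcal C(M,\_\_)$ preserves $\kappa(M)$-directed colimits, so it suffices to show that the forgetful functor $V:EM_U\to EM_U$... more precisely $V:EM_U\to\mathcal C$ preserves $\kappa(M)$-directed colimits.

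The key step is therefore: \emph{the forgetful functor $V:EM_U\to\mathcal C$ preserves $\kappa(M)$-directed colimits (indeed all colimits).} But this is exactly the content of the last assertion of Theorem~\ref{L3.1}: since $U$ preserves colimits, the colimit in $EM_U$ of any system $\{M_i\}_{i\in I}$ is computed by taking $\underset{i\in I}{\mathrm{colim}}\,M_i$ in $\mathcal C$ and equipping it with the induced module structure. In other words, $V$ creates (hence preserves) all colimits, in particular $\kappa(M)$-directed ones. Concatenating, for a $\kappa(M)$-directed system $\{N_i\}_{i\in I}$ in $EM_U$ we get
\begin{equation}
EM_U\!\left((UM,\theta M),\varinjlim_i N_i\right)\cong\mathcal C\!\left(M,V\varinjlim_i N_i\right)\cong\mathcal C\!\left(M,\varinjlim_i V N_i\right)\cong\varinjlim_i\mathcal C(M,VN_i)\cong\varinjlim_i EM_U((UM,\theta M),N_i),
\end{equation}
where the first and last isomorphisms are \eqref{monadj}, the second is preservation of colimits by $V$, and the third is $\kappa(M)$-presentability of $M$ in $\mathcal C$. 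One should check that these isomorphisms are compatible with the canonical comparison map into the colimit, which is routine from naturality of the adjunction isomorphism.

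I do not anticipate a serious obstacle here; the statement is essentially formal once one has Theorem~\ref{L3.1} in hand. The only mild subtlety worth spelling out is why $V$ preserving colimits is enough: presentability is about the covariant hom-functor preserving filtered (here $\kappa(M)$-directed) colimits, and since $EM_U((UM,\theta M),\_\_)$ factors as $\mathcal C(M,\_\_)\circ V$, preservation by each factor suffices. It is also worth remarking that the exactness hypothesis on $U$ is not actually needed for this lemma — only that $U$ preserves colimits, so that $F\dashv V$ with $V$ colimit-preserving — though we retain it for consistency with the standing hypotheses of this section.
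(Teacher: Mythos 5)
Your proof is correct and is essentially the same as the paper's: both use the adjunction \eqref{monadj} together with the fact from Theorem~\ref{L3.1} that colimits in $EM_U$ are computed in $\mathcal C$, then invoke the $\kappa(M)$-presentability of $M$. Your additional remarks (naturality of the comparison map, that exactness of $U$ is not needed) are accurate but not required.
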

	
	\begin{proof}
		Let $\{N_i\}_{i\in I}$ be a system of objects in $EM_U$ that is $\kappa(M)$-directed. By Lemma \ref{L3.1}, we know that the underlying object of $N:=\underset{i\in I}{colim}
		\textrm{ }N_i$  in $EM_U$ is given by taking the colimit in $\mathcal C$. We now see that
		\begin{equation}\label{3.4cy}
			EM_U(UM,N)\cong \mathcal C\left(M, {\underset{i\in I}{colim}}\textrm{ }N_i\right)={\underset{i\in I}{colim}}\textrm{ }\mathcal C(M,N_i)={\underset{i\in I}{colim}}\textrm{ }
			EM_U(UM,N_i)
		\end{equation} The result is  now clear.
	\end{proof}
	
	\begin{thm}\label{P3.3} Suppose that $(U,\theta,\eta)$ is  a monad on $\mathcal C$ which is exact and preserves colimits.  Let $G$ be a generator for $\mathcal C$ that is $\kappa(G)$-presentable. Then, $EM_U$ is a locally $\kappa(G)$-presentable category.
	\end{thm}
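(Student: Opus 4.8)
The plan is to verify the standard criterion for local presentability: $EM_U$ is locally $\kappa(G)$-presentable as soon as it is cocomplete and there is a \emph{set} of $\kappa(G)$-presentable objects such that every object of $EM_U$ is a $\kappa(G)$-directed colimit of objects from that set (see \cite{AR}). Cocompleteness of $EM_U$, and the fact that colimits in $EM_U$ may be computed in $\mathcal C$ through $U$, is part of Theorem \ref{L3.1}. Writing $H:=(UG,\theta G)$, Theorem \ref{L3.1} shows that $H$ is a generator of $EM_U$, while Lemma \ref{L3.2} applied with $M=G$ shows that $H$ is $\kappa(G)$-presentable in $EM_U$. Being Grothendieck, $EM_U$ is locally presentable, so its $\kappa(G)$-presentable objects form, up to isomorphism, a set; this will be the set from which we build colimits. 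Everything therefore reduces to showing that an arbitrary $N\in EM_U$ is a $\kappa(G)$-directed colimit of $\kappa(G)$-presentable objects.

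To do this I would first exhibit $N$ as a cokernel of a morphism between coproducts of copies of $H$. As noted in the proof of Theorem \ref{L3.1}, $f_N\colon UN\to N$ is an epimorphism in $EM_U$; composing $f_N$ with $Up$ for an epimorphism $p\colon G^{(X)}\twoheadrightarrow N$ in $\mathcal C$, and using that $U$ preserves colimits (hence epimorphisms and coproducts), gives an epimorphism $H^{(X)}=U(G^{(X)})\twoheadrightarrow N$ in $EM_U$. Applying the same construction to the kernel of this epimorphism yields $N\cong \mathrm{Coker}\big(H^{(Y)}\xrightarrow{\,f\,}H^{(X)}\big)$ for suitable sets $X,Y$. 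Now for each $Y'\in\mathcal P_{\kappa(G)}(Y)$ the coproduct $H^{(Y')}$ is $\kappa(G)$-presentable (a coproduct of fewer than $\kappa(G)$ copies of the $\kappa(G)$-presentable object $H$, using that $\kappa(G)$ is regular), and since $H^{(X)}=\varinjlim_{X'\in\mathcal P_{\kappa(G)}(X)}H^{(X')}$ is a $\kappa(G)$-directed colimit, the composite $H^{(Y')}\hookrightarrow H^{(Y)}\xrightarrow{f}H^{(X)}$ factors through $H^{(X')}$ for some $X'\in\mathcal P_{\kappa(G)}(X)$. I would then let $\mathcal D$ be the poset of pairs $(Y',X')\in\mathcal P_{\kappa(G)}(Y)\times\mathcal P_{\kappa(G)}(X)$ with $f\big(H^{(Y')}\big)\subseteq H^{(X')}$, ordered componentwise. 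Using regularity of $\kappa(G)$ once more, $\mathcal D$ is $\kappa(G)$-directed and the two projections to $\mathcal P_{\kappa(G)}(Y)$ and $\mathcal P_{\kappa(G)}(X)$ are cofinal, so $(Y',X')\mapsto \mathrm{Coker}\big(H^{(Y')}\to H^{(X')}\big)$ is a $\kappa(G)$-directed system in $EM_U$ whose colimit --- computed, like all colimits, in $\mathcal C$ --- is $\mathrm{Coker}(f)\cong N$; and each term, being a finite colimit of $\kappa(G)$-presentable objects, is again $\kappa(G)$-presentable (see \cite{AR}). This is exactly the presentation required by the criterion.

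The step that genuinely needs care is the construction of the indexing poset $\mathcal D$ and the verification that the colimit over $\mathcal D$ of the cokernels $\mathrm{Coker}(H^{(Y')}\to H^{(X')})$ reproduces $\mathrm{Coker}(f)$. This rests on the regularity of $\kappa(G)$ (to keep $\mathcal D$ itself $\kappa(G)$-directed and all index sets of size $<\kappa(G)$), on the $\kappa(G)$-presentability of $H$ coming from Lemma \ref{L3.2} (to factor maps out of $H^{(Y')}$ through $\kappa(G)$-small subcoproducts of $H^{(X)}$), and on the fact, recorded in Theorem \ref{L3.1}, that coproducts and cokernels in $EM_U$ are computed in $\mathcal C$ and hence commute with the colimit over $\mathcal D$. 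The remaining ingredients are immediate from Theorem \ref{L3.1}, Lemma \ref{L3.2}, and standard properties of locally presentable categories.
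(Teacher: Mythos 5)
Your proposal is correct and follows essentially the same route as the paper: both arguments rest on the two observations that $(UG,\theta G)$ is a generator of $EM_U$ (Theorem \ref{L3.1}) and is $\kappa(G)$-presentable (Lemma \ref{L3.2}). The only difference is that the paper then directly invokes the standard fact that a cocomplete category with a $\kappa(G)$-presentable (strong) generator is locally $\kappa(G)$-presentable, whereas you reprove that fact in place via the cokernel-of-coproducts filtration; your extra argument is sound but could simply be replaced by a citation of \cite{AR}.
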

	
	\begin{proof} From the proof of Lemma \ref{L3.1}, we know that the pair $(UG,\theta G)$ is a generator for the Eilenberg-Moore category $EM_U$. Since $G$ is $\kappa(G)$-presentable, it follows from Lemma \ref{L3.2} that $(UG,\theta G)$ is $\kappa(G)$-presentable as an object of $EM_U$. Hence, $EM_U$ is locally $\kappa(G)$-presentable. 
	\end{proof}
	
	For the sake of convenience, we now fix a regular cardinal
	$
	\delta\geq \mbox{max$\{\kappa(G), ||G||^G\}$} 
	$.  We note in particular that since $\delta\geq \kappa(G)$, the object $G$ is also $\delta$-presentable. If the monad $U$ preserves colimits, we  use Theorem \ref{T2.2} to fix  $\lambda^U = {(||UG||^G)}^{\kappa(G)}\times \kappa(G)^{\kappa(G)}$ such that $||UM||^G\leq \lambda^U\times  (||M||^G)^{\kappa(G)}$ for any object $M\in \mathcal C$. 
	
	\begin{thm}\label{P3.4} Suppose that $(U,\theta,\eta)$ is  a monad on $\mathcal C$ which is exact and preserves colimits.  Let $(M,f_M)\in EM_U$ and consider some $x\in el_G(M)$. Then, there is a subobject $N_x\subseteq M$ in $EM_U$ such that $||N_x||^G\leq \lambda^U$ and $x\in el_G(N_x)$. 
		
	\end{thm}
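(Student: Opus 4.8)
The idea is that $N_x$ should be taken to be the image, formed inside the abelian category $EM_U$, of the $EM_U$-morphism adjoint to $x$; taking instead the image of $x$ itself inside $\mathcal C$ would not in general produce a subobject in $EM_U$, and this is the only genuinely delicate point. Concretely, under the free--forgetful adjunction isomorphism \eqref{monadj}, $EM_U(UG,N)\cong\mathcal C(G,N)$ natural in $N$, the element $x\in el_G(M)=\mathcal C(G,M)$ corresponds to a morphism $\bar x\colon (UG,\theta G)\longrightarrow (M,f_M)$ in $EM_U$ (whose underlying morphism in $\mathcal C$ is $f_M\circ Ux\colon UG\longrightarrow UM\longrightarrow M$). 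Since $U$ is exact, Theorem \ref{L3.1} shows that $EM_U$ is abelian with kernels and cokernels computed in $\mathcal C$; hence $\bar x$ factors as $\bar x=\iota\circ p$ with $p\colon(UG,\theta G)\longrightarrow N_x$ an epimorphism and $\iota\colon N_x\longrightarrow (M,f_M)$ a monomorphism in $EM_U$, where $N_x:=\operatorname{Im}(\bar x)=\operatorname{Ker}(\operatorname{Coker}\bar x)$. By construction $N_x$ is a subobject of $M$ in $EM_U$.

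For the cardinality bound, note that $N_x$ is built from a cokernel (a colimit, which $U$ preserves) followed by a kernel (which the exact $U$ preserves), so by Theorem \ref{L3.1} its underlying object and the underlying morphism of $p$ are computed in $\mathcal C$; in particular $p$ is an epimorphism $UG\twoheadrightarrow N_x$ in $\mathcal C$. Our standing hypothesis on $G$ then gives $||N_x||^G\leq ||UG||^G$. Finally, since $\kappa(G)$ is infinite we have $\lambda^U={(||UG||^G)}^{\kappa(G)}\times\kappa(G)^{\kappa(G)}\geq ||UG||^G$: if $||UG||^G$ is infinite this is clear, and if it is finite then already $\kappa(G)^{\kappa(G)}\geq\aleph_0\geq ||UG||^G$. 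Hence $||N_x||^G\leq\lambda^U$, as required.

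It remains to see that $x\in el_G(N_x)$, i.e.\ that $x\colon G\to M$ factors through $\iota\colon N_x\hookrightarrow M$. Let $p'\in\mathcal C(G,N_x)$ be the image of $p\in EM_U(UG,N_x)$ under \eqref{monadj}. Naturality of that adjunction isomorphism in the variable $N$, applied to $\iota$, yields $\iota\circ p'=x$ (both sides are the image of $\bar x=\iota\circ p$ under $EM_U(UG,M)\cong\mathcal C(G,M)$). Therefore $x$ lies in the image of $\mathcal C(G,\iota)\colon el_G(N_x)\hookrightarrow el_G(M)$, which is precisely the assertion $x\in el_G(N_x)$. The one step deserving care is the claim that the image taken in $EM_U$ has underlying object a quotient of $UG$ in $\mathcal C$; this is exactly where the hypotheses that $U$ is exact and preserves colimits are used, via Theorem \ref{L3.1}, and everything else is a routine chase of the adjunction.
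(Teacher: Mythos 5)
Your proposal is correct and follows essentially the same route as the paper: form the adjoint morphism $\hat{x}=f_M\circ Ux\colon UG\to M$ in $EM_U$, take $N_x$ to be its image, bound $||N_x||^G$ by $||UG||^G\leq\lambda^U$ via the epimorphism $UG\twoheadrightarrow N_x$, and recover $x$ from the factorization (the paper does this concretely via the unit, noting $f_M\circ Ux\circ\eta G=x$, which is the same computation your naturality argument encodes). Your extra care about why the image in $EM_U$ is computed in $\mathcal C$ and why $||UG||^G\leq\lambda^U$ only makes explicit what the paper leaves implicit.
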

	
	\begin{proof}
		By definition, $x\in el_G(M)=\mathcal C(G,M)$. By \eqref{monadj}, we have a corresponding morphism $\hat{x}\in EM_U(UG,M)\cong \mathcal C(G,M)$ given by setting 
		$\hat{x}:UG \xrightarrow{Ux}UM\xrightarrow{f_M}M$. By setting $N_x:=Im(\hat{x})$ in $EM_U$, we obtain the commutative diagram
		\begin{equation}\label{3.6cd}
			\begin{tikzcd}
				&    &  {UM}  \arrow{ddr}{f_M} &\\
				&   &   &\\[-2ex]
				G\arrow{r}{\eta G}&   UG\arrow{rr}[swap]{\hat{x}} \arrow{uur}{Ux} \arrow{ddr}{p_x} & & M \\
				&     &   &\\[-2ex] 
				& &  N_x  \arrow{uur}{i_x} &\\
			\end{tikzcd}
		\end{equation} We note that since the composition $G\xrightarrow{\eta G}UG \xrightarrow{Ux}UM\xrightarrow{f_M}M$ gives back $x:G\longrightarrow M$, it follows from \eqref{3.6cd} that
		$x\in el_G(N_x)\subseteq el_G(M)$. Finally, since  $p_x:UG\longrightarrow N_x$ is an epimorphism, it follows   that
		$
		||N_x||^G\leq ||UG||^G\leq \lambda^U
		$. This proves the result.
	\end{proof}
	
	\begin{Thm}\label{T3.5} Suppose that $(U,\theta,\eta)$ is  a monad on $\mathcal C$ which is exact and preserves colimits.  Let $(M,f_M)\in EM_U$ and consider  some $X\subseteq  el_G(M)$. Then, there is a subobject $N_X\subseteq M$ in $EM_U$ such that $||N_X||^G\leq\lambda^U\times \delta^\delta\times {|X|}^\delta$ and $X\subseteq el_G(N_X)$. 
		
	\end{Thm}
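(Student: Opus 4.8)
The plan is to build $N_X$ as a union (i.e., a filtered colimit inside $EM_U$) of the small subobjects $N_x$ produced by Theorem \ref{P3.4}. First I would invoke Theorem \ref{P3.4} to obtain, for every $x \in X$, a subobject $N_x \subseteq M$ in $EM_U$ with $x \in el_G(N_x)$ and $||N_x||^G \leq \lambda^U$. Then I would set $N_X := \sum_{x \in X} N_x$, the smallest subobject of $M$ in $EM_U$ containing all the $N_x$; concretely, this is the image in $EM_U$ of the canonical morphism $\bigoplus_{x \in X} N_x \longrightarrow M$, which makes sense because by Theorem \ref{L3.1} colimits (hence direct sums and images) in $EM_U$ are computed on underlying objects in $\mathcal C$, and $U$ being exact guarantees the image inherits a module structure. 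Since each $x \in el_G(N_x) \subseteq el_G(N_X)$, we immediately get $X \subseteq el_G(N_X)$, which is the second assertion.

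It remains to bound $||N_X||^G$. Because $N_X$ is a quotient of $\bigoplus_{x \in X} N_x$ in $\mathcal C$ (the underlying object is the image of that direct sum), the standing assumption on the generator $G$ — that cardinality does not increase along epimorphisms — gives
\begin{equation}
||N_X||^G \leq \Bigl|\Bigl| \bigoplus_{x \in X} N_x \Bigr|\Bigr|^G.
\end{equation}
Now I would apply Lemma \ref{L2.1} to the family $\{N_x\}_{x \in X}$ with the indexing set $S = X$. Take $\mu := \max\{\sup_x ||N_x||^G,\ \aleph_0\}$, which is bounded by $\max\{\lambda^U, \aleph_0\}$, and take $\lambda := \max\{|X|,\ \kappa(G),\ \aleph_0\}$. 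Lemma \ref{L2.1} then yields $||\bigoplus_{x \in X} N_x||^G \leq \mu^{\kappa(G)} \times \lambda^{\kappa(G)}$. Since $\kappa(G) \leq \delta$ and $||G||^G \leq \delta$ (so in particular $\lambda^U = (||UG||^G)^{\kappa(G)} \times \kappa(G)^{\kappa(G)}$ and $\mu$ are controlled), and since $\delta \geq \aleph_0$ is infinite, a routine manipulation of cardinal arithmetic — using $\mu^{\kappa(G)} \leq (\lambda^U)^\delta \cdot \aleph_0^\delta$ and $\lambda^{\kappa(G)} \leq |X|^\delta \times \delta^\delta$, together with the fact that $\delta^\delta$ absorbs the $\aleph_0^\delta$ and $\kappa(G)^{\kappa(G)}$ factors — collapses the bound to $\lambda^U \times \delta^\delta \times |X|^\delta$, as claimed.

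The only genuinely delicate points are bookkeeping rather than conceptual. One must be slightly careful that the "sum of subobjects" $\sum_{x \in X} N_x$ is legitimately an object of $EM_U$ and a subobject of $M$ there: this is exactly where exactness of $U$ and the description of limits/colimits in $EM_U$ from Theorem \ref{L3.1} are used, since the image factorization in $\mathcal C$ must be promoted to one in $EM_U$. The other point is making sure the final cardinal estimate is stated cleanly in terms of $\delta$ rather than $\kappa(G)$ and the various auxiliary quantities; here one leans on $\delta \geq \max\{\kappa(G), ||G||^G\}$, on $\delta$ being infinite so that $\delta^\delta$ swallows all the smaller exponential terms, and on the convention already fixed in the text for $\lambda^U$. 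Neither step should present a real obstacle, so the main "work" is simply assembling Theorem \ref{P3.4}, Lemma \ref{L2.1}, and Theorem \ref{L3.1} in the right order.
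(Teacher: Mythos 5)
Your construction produces literally the same subobject as the paper's: your $N_X=\sum_{x\in X}N_x=\sum_{x\in X}Im(\hat x)$ is the image of $\bigoplus_{x\in X}\hat x:(UG)^{(X)}\longrightarrow M$, and this map is exactly the adjoint $\hat h_X$ of $h_X:G^{(X)}\longrightarrow M$ used in the paper, so the approach is essentially the same; the only difference is bookkeeping, where the paper bounds $||(UG)^{(X)}||^G$ by applying Theorem \ref{T2.2} to $G^{(X)}$, while you bound $||\bigoplus_x N_x||^G$ via Proposition \ref{P3.4} plus Lemma \ref{L2.1}. One small wrinkle in your arithmetic: you write $\mu^{\kappa(G)}\leq(\lambda^U)^{\delta}\cdot\aleph_0^{\delta}$, but $(\lambda^U)^{\delta}$ need not be absorbed into $\lambda^U\times\delta^\delta$; what you actually need (and what Lemma \ref{L2.1} gives, since its exponent is $\kappa(G)$) is $(\lambda^U)^{\kappa(G)}=\lambda^U$, which holds because $\lambda^U=(||UG||^G)^{\kappa(G)}\times\kappa(G)^{\kappa(G)}$ and $\kappa(G)\cdot\kappa(G)=\kappa(G)$. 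With that correction the estimate $\mu^{\kappa(G)}\times\lambda^{\kappa(G)}\leq\lambda^U\times\delta^\delta\times|X|^\delta$ goes through and the proof is complete.
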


	\begin{proof}
		By considering the morphisms $x\in X\subseteq el_G(M)=\mathcal C(G,M)$, we obtain $h_X:G^{(X)}\longrightarrow M$ from a direct sum of copies of $G$. Since $ \delta\geq \mbox{max$\{\kappa(G), ||G||^G\}$}$, it follows from Lemma \ref{L2.1} that
		\begin{equation}
			||G^{(X)}||^G\leq \delta^{\kappa(G)}\times (|X|\times \kappa(G))^{\kappa(G)}\leq \delta^\delta\times {|X|}^\delta
		\end{equation}
		By the adjunction in \eqref{monadj} and the fact that $U$ preserves direct sums, we obtain $\hat{h}_X\in EM_U((UG)^{(X)},M)\cong \mathcal C(G^{(X)},M)$ and a commutative diagram
		\begin{equation}\label{3.7cd}
			\begin{tikzcd}
				&    &  {UM}  \arrow{ddr}{f_M} &\\
				&   &   &\\[-2ex]
				&   (UG)^{(X)}\arrow{rr}[swap]{\hat{h}_X} \arrow{uur}{Uh_X} \arrow{ddr}{p_X} & & M \\
				&     &   &\\[-2ex] 
				& &  N_X  \arrow{uur}{i_X} &\\
			\end{tikzcd}
		\end{equation} 
		by setting $N_X:=Im(\hat{h}_X)$ in $EM_U$. As with \eqref{3.6cd} in the proof of Proposition \ref{P3.4}, it follows from that \eqref{3.7cd}  that $X\subseteq el_G(N_X)\subseteq el_G(M)$. Again, since $U$ preserves colimits and $p_X:(UG)^{(X)}\longrightarrow N_X$ is an epimorphism, it follows  that
		\begin{equation}
			||N_X||^G\leq ||(UG)^{(X)}||^G\leq \lambda^U\times (||G^{(X)}||^G)^{\kappa(G)}\leq \lambda^U\times  (\delta^\delta\times {|X|}^\delta)^\delta=\lambda^U\times \delta^\delta\times {|X|}^\delta
		\end{equation}
	\end{proof}
	
	We now come to the case of Eilenberg-Moore categories of comonads. By definition, a comonad $(V,\delta,\epsilon)$ on $\mathcal C$ 
	consists of an endofunctor $V:\mathcal C\longrightarrow \mathcal C$ and two natural transformations $\delta: V\longrightarrow V\circ V$ and $\epsilon:V\longrightarrow 1_{\mathcal C}$ satisfying coassociativity and counit conditions.
	A comodule over $V$ is a pair $(M,f^M)$ where $M\in \mathcal C$ and $f^M:M\longrightarrow VM$ is a morphism in $\mathcal C$ that satisfies
	\begin{equation}\label{3.9cq}
		\delta M\circ f^M = Vf^M \circ f^M \quad \textup{and} \quad \epsilon M \circ f^M= 1_M.
	\end{equation}
	A morphism $g:(M,f^M)\longrightarrow (M',f^{M'})$ of $V$-comodules is a morphism $g:M\longrightarrow N$ in $\mathcal C$ such that $f^{M'}\circ g= Vg \circ f^M.$ This forms the Eilenberg-Moore category over the comonad $(V, \delta, \epsilon)$, which we denote by $EM^V$. When there is no danger of confusion, we will often write an object 
	$(M,f^M)$ of $EM^V$ simply as $M\in EM^V$.
	
	\smallskip
	
	For any object $M$ in $\mathcal C$, it is easy to observe that $(VM,VM\xrightarrow{\delta M} VVM)$ is a comodule over the comonad $V$. Further, it is known (see, for instance, \cite{Mac}) that for any object $M \in EM^V$ and $N \in \mathcal C,$ there is an adjunction of functors given by natural isomorphisms
	\begin{equation}\label{comonadj}
		EM^V(M,VN)\cong \mathcal C(M,N).
	\end{equation}
Suppose in particular that $(V,\delta,\epsilon)$  is a comonad on $\mathcal C$ such that $V$ is exact and preserves colimits. Then, it follows as in the proof of Proposition \ref{L3.1} that $EM^V$ is an abelian category. Further, if $\{M_i\}_{i\in I}$ is any system (resp. any finite system) of objects in $EM^V$, the colimit (resp. the finite limit) in $EM^V$ is defined by taking $\underset{i\in I}{colim}\textrm{ }M_i$ (resp.  $\underset{i\in I}{lim}\textrm{ }M_i$) in $\mathcal C$. 

\begin{lem}\label{L3.6cf}
Let $(V,\delta,\epsilon)$ be a comonad on $\mathcal C$ such that $V$ is exact. Then, the category $EM^V$ is well powered.
\end{lem}
\begin{proof}
We consider an inclusion $i:(M',f^{M'})\longrightarrow (M,f^M)$ in $EM^V$. We have noted that kernels in $EM^V$ are computed in $\mathcal C$. Accordingly $i:M'\longrightarrow M$
is a monomorphism in $\mathcal C$. Since $V$ is exact, we see that $Vi:VM'\longrightarrow VM$ is a monomorphism in $\mathcal C$. We also know that $Vi\circ f^{M'}=f^M\circ i$. Since $Vi$ is a monomorphism, the composition $f^M\circ i$ factors uniquely through $Vi$. Hence, every subobject of $(M,f^M)$ in $EM^V$ corresponds to a unique subobject of $M$ in $\mathcal C$. Since $\mathcal C$ is a Grothendieck category and hence well powered, the result follows. 
\end{proof}

	\begin{thm}\label{P3.7}
		Let $(V,\delta,\epsilon)$ be a comonad on $\mathcal C$ such that $V$ is exact and preserves colimits. Then $EM^V$ is a Grothendieck category.  
	\end{thm}
	\begin{proof} 
	Since $\mathcal C$ is a Grothendieck category, and colimits and finite limits in $EM^V$ are computed in $\mathcal C$, it follows that filtered colimits are exact in $EM^V.$ 
		It  remains to show that $EM^V$ has a generator. We now proceed in a manner similar to the proof of \cite[Theorem 3.3]{BBK}. Let $(M,f^{M})\in EM^{V}$. Since 
		$G$ is a generator for $\mathcal C$, we can find an epimorphism $g:G^{(I)}\longrightarrow M$ in $\mathcal C$ for some indexing set $I$. Since $V$ is exact and preserves colimits, this induces an epimorphism $Vg:VG^{(I)}=(VG)^{(I)} \longrightarrow VM$ in $EM^V.$ We now consider the following pullback diagram in $EM^{V}$ 
		\begin{equation}
			\begin{CD}\label{3.11cd}
				N @> {} >> M\\
				@V {} VV @VVf^{M} V \\
				(VG)^{(I)} @>Vg  >> VM ~\\
			\end{CD}
		\end{equation}
		From (\ref{3.9cq}), we note that  $f^M:M\longrightarrow VM$ is a monomorphism in $\mathcal C$. Since $f^M$ is also a morphism in $EM^V$ and kernels in 
		$EM^V$ are computed in $\mathcal C$, it follows that $f^M$ is a monomorphism in $EM^V$. From the pullback in \eqref{3.11cd}, it now follows that
		we have a monomorphism $N\longrightarrow  (VG)^{(I)}$ in $EM^V$. Now let $Fin(I)$ denote the collection of finite subsets of $I.$ Then for any $S\in Fin(I)$, we define an object $K_{S}:=(VG)^{(I)}\times_{VM} Im(Vg~\vert_{VG^{(S)}})$ in $EM^{V}.$
		We now consider the following pullback diagrams 
		\begin{equation}\label{3.12cd}
			\begin{CD}
				N\cap K_{S} @> {} >> K_{S}\\
				@VVV @VVV \\
				N @>{}  >> (VG)^{(I)} ~\\
			\end{CD} \qquad \text{and} \qquad
			\begin{CD}
				P_{S} @> {} >> Im(Vg~\vert_{VG^{(S)}})\\
				@V {} VV @VV{} V \\
				M @>f^{M}  >> VM ~\\
			\end{CD}
		\end{equation}
		We note that all the morphisms appearing in \eqref{3.12cd} are monomorphisms. Together, they induce a commutative diagram
		\begin{equation}
			\begin{CD}\label{3.13cd}
				N\cap (VG)^{(S)} @>{} >> N\cap K_{S} @> {} >> P_{S}\\
				@V {} VV @V {} VV @VV {} V \\
				(VG)^{(S)} @>{}  >> K_{S}@> {} >> Im(Vg~\vert_{VG^{(S)}})\\
			\end{CD} 
		\end{equation}
		with all the squares as pullbacks in $EM^{V}.$ Further, as $(VG)^{(S)}\longrightarrow K_{S}\longrightarrow Im(Vg~\vert_{VG^{(S)}})$ is an epimorphism in $EM^{V}$ and $EM^{V}$ is an abelian category, its pullback $N\cap (VG)^{(S)} \longrightarrow N\cap K_{S} \longrightarrow P_{S}$ must be an epimorphism in $EM^{V}.$
		We note that $\underset{S\in Fin(I)}\varinjlim Im(Vg~\vert_{VG^{(S)}}) = Im(Vg)=VM$, since $Vg$ is an epimorphism. Applying filtered colimits over all $S\in Fin(I)$ to the right-hand side diagram in (\ref{3.12cd}), we obtain $M=\underset{S\in Fin(I)}\bigcup P_{S}$. Also from  the right-hand side diagram in (\ref{3.12cd}), it follows that
		$P_S$ is the epimorphic  image of some subobject of $(VG)^{(S)}$. Since $S$ is finite and $EM^V$ is well powered as shown in 
		Lemma \ref{L3.6cf}, we see that   the subobjects of $(VG)^{n}$, $n\in\mathbb N,$ form a set of generators for $EM^V.$
		
	\end{proof}
	
	The following notion will be needed at several places in the paper.
	
	\begin{defn}\label{D3.8}
		Let $(V,\delta, \epsilon)$ be a comonad on $\mathcal C.$ We will say that the comonad $V$ is semiperfect if $EM^V$ has a projective generator.
	\end{defn}
	The above terminology is motivated in two ways. First, we know (see \cite[Definition 3.2.4 ]{DNR}) that a coalgebra $C$ over a field $k$ is right (left) semiperfect if every finite dimensional right (left) $C$-comodule has a projective cover. Second, in \cite{SY}, an abelian category has been called semiperfect if every finitely generated object has a projective cover.

	\smallskip
	
	We conclude this section by giving an example of semiperfect comonads on a category. Let $(U, \theta, \eta)$ be a Frobenius monad (see \cite[Definition 1.1]{RS}). Then, by definition, $U$ is equipped with a natural transformation $\epsilon:U\longrightarrow 1$ such that there exists
	$\rho:1\longrightarrow U^2$ satisfying $U\theta\circ \rho U=\theta U\circ U\rho$ and $U\epsilon \circ \rho=\eta=\epsilon U\circ \rho$. Then we know (see \cite[Theorem 1.6]{RS}) that $(U,U)$ is an adjoint pair and there is a natural transformation  $\delta: U\longrightarrow U^2$ such that $(U, \delta, \epsilon)$ is a comonad. 
	It follows (see \cite[$\S$ 2.6]{BBW}) that the Eilenberg-Moore category $EM_{(U,\theta,\eta)}$ of the monad $(U,\theta,\eta)$ and the Eilenberg-Moore category $EM^{(U,\delta,\epsilon)}$ of the comonad $(U,\delta,\epsilon)$ are isomorphic. Since $(U,U)$ is an adjoint pair, we see that $U$ is exact and preserves colimits. If $\mathcal C$ has a projective generator, it now follows by Proposition \ref{L3.1} that so does 
	$EM_{(U,\theta,\eta)}\cong EM^{(U,\delta,\epsilon)}$.  Therefore, every Frobenius monad determines a semiperfect comonad.

	\section{Modules over a monad quiver}

	We continue with $\mathcal C$ being a Grothendieck category as before. We suppose from now on that the generator $G$ of $\mathcal C$ is projective. A morphism $\phi:(U,\theta,\eta)\longrightarrow (U',\theta',\eta')$ of monads over $\mathcal C$ is a natural transformation $\phi:U\longrightarrow U'$ that satisfies
	\begin{equation}
		\phi\circ \theta =\theta'\circ (\phi\ast\phi): U \circ U \longrightarrow U'\qquad \eta'=\phi\circ \eta: 1\longrightarrow U'
	\end{equation} This forms the category $Mnd(\mathcal C)$ of monads over $\mathcal C$. A morphism $\phi: U\longrightarrow U'$ of monads induces a restriction functor 
	\begin{equation}\label{forg4}
		\phi_*:EM_{U'}\longrightarrow EM_U \qquad (M',f_{M'})\mapsto (M',f_{M'}\circ \phi(M'))
	\end{equation}  Suppose that  
	$U$ and $U'$ are exact and preserve colimits.
	Given $(M,f_M)\in EM_U$, we set
	\begin{equation}\label{4.3qi}
		\phi^*(M):=Coeq\left(U'UM\doublerightarrow{\qquad\theta'(M)\circ (U'\phi(M))\qquad }{U'f_M}U'M\right)
	\end{equation} This determines a functor $\phi^*:EM_{U}\longrightarrow EM_{U'}$ that is left adjoint to $\phi_*$ (see, for instance, \cite[Proposition 1]{Lint}). Then, we know from Proposition \ref{L3.1} that $EM_{U'}$ is a Grothendieck category. By 
	Proposition \ref{L3.1}, it is also clear that the restriction functor $\phi_{*}: EM_{U'}\longrightarrow EM_{U}$ preserves colimits. It follows by \cite[Proposition 8.3.27]{KS} that $\phi_{*}$ also has a right adjoint. From the proof of Proposition \ref{L3.1}, we see that $(UG,\theta G)$ and $(U'G,\theta' G)$ are generators for $EM_U$ and $EM_{U'}$ respectively. We note that for any $(M',f_{M'})\in EM_{U'}$, we have natural isomorphisms
	\begin{equation}\label{4.4dk}
		EM_{U'}(\phi^*(UG,\theta G),(M',f_{M'}))\cong  EM_{U}((UG,\theta G),\phi_*(M',f_{M'}))\cong \mathcal C(G,M')\cong EM_{U'}((U'G,\theta' G),(M',f_{M'}))
	\end{equation} whence it follows by Yoneda lemma that $\phi^*(UG,\theta G)=(U'G,\theta' G)\in EM_{U'}$. 
	
	\begin{defn}\label{D4.0}
		Let $\phi:(U,\theta,\eta)\longrightarrow (U',\theta',\eta')$  be a morphism of monads over $\mathcal C$. We will say that $\phi$ is flat if the functor
		$\phi^*:EM_U\longrightarrow EM_{U'}$ is exact.
	\end{defn}
	
	We now recall that a quiver $\mathbb Q=(\mathbb V,\mathbb  E)$ is a directed graph, consisting of a set of $\mathbb V$ of vertices and a set $\mathbb E$ of edges. We will use $\phi: x\longrightarrow y$ to denote an arrow in $\mathbb Q$ going from $x$ to $y$. We will treat a quiver $\mathbb Q$ as a category in the obvious manner.
	
	\begin{defn}\label{D4.1} Let  $\mathbb Q=(\mathbb V,\mathbb  E)$ be a quiver. A monad quiver over $\mathcal C$ is a functor $\mathscr U:\mathbb Q\longrightarrow Mnd(\mathcal C)$. We will say $\mathscr U$ is flat if for each arrow $\phi:x\longrightarrow y$ in $\mathbb Q$, the induced morphism $\mathscr U(\phi):\mathscr U(x)\longrightarrow \mathscr U(y)$ of monads is flat. For $x\in \mathbb V$, we will often denote the monad $\mathscr U(x)$ by $\mathscr U_x$. 
	\end{defn}
	
	If $\mathscr U$ is a monad quiver over $\mathcal C$ and $\phi: x\longrightarrow y$ is an edge of $\mathbb Q$, by abuse of notation, we will continue to denote $\mathscr U(\phi):
	\mathscr U_x\longrightarrow \mathscr U_y$ simply by $\phi$. Accordingly, we will often write
	$
	\phi^*=\mathscr U(\phi)^*:EM_{\mathscr U_x}\longrightarrow EM_{\mathscr U_y}$ and $\phi_*=\mathscr U(\phi)_*:EM_{\mathscr U_y}\longrightarrow EM_{\mathscr U_x}
	$ for an edge $\phi:x\longrightarrow y$ in $\mathbb Q$.

	\begin{eg}\label{eg4.8p}
		\emph{We now give several examples of situations where the framework of monad quivers would apply. Let $k$ be a field.  We use Sweedler notation for coproducts and
			coactions, with summation symbols suppressed. }
	\end{eg}
	
	\smallskip
	(1) Let $Alg_k$ denote the category of $k$-algebras. Each $A\in Alg_k$ defines a monad $A\otimes_k\_\_$ on the category $Vect_k$ of $k$-vector spaces. If $T:\mathbb Q\longrightarrow Alg_k$ is any functor, we see that $\mathscr U:\mathbb Q\longrightarrow Mnd(Vect_k)$, $x\mapsto T(x)\otimes_k\_\_$ becomes a monad quiver. For any $x\in \mathbb Q$, the category $EM_{\mathscr U_x}$ of $\mathscr U_x$-modules takes values in the category of left $T(x)$-modules. 
	
	\smallskip
	(2) Let $(\mathcal D,\otimes)$ be a $k$-linear monoidal category and let $\mathcal L$ be a $k$-linear Grothendieck category along with an action $\_\_\otimes \_\_:\mathcal D\times 
	\mathcal L\longrightarrow \mathcal L$ such that the functor $X\otimes \_\_:\mathcal L\longrightarrow \mathcal L$ is exact and preserves colimits for any $X\in \mathcal D$. Then, any monoid object
	$A\in Alg(\mathcal D)$ determines a monad $A\otimes\_\_:\mathcal L\longrightarrow \mathcal L$.  If $T:\mathbb Q\longrightarrow Alg(\mathcal D)$ is any functor, we see that $\mathscr U:\mathbb Q\longrightarrow Mnd(\mathcal L)$, $x\mapsto T(x)\otimes \_\_$ becomes a monad quiver. For any $x\in \mathbb Q$, the category $EM_{\mathscr U_x}$ of $\mathscr U_x$-modules takes values in the category of ``left $T(x)$-module objects in $\mathcal L$.''  For instance, we may take $\mathcal D=Vect_k$. Then, any $k$-algebra 
	$R$ determines a monad $R\otimes \_\_:\mathcal L\longrightarrow \mathcal L$ on $\mathcal L$. We note that the categories of ``$R$-module objects in $\mathcal L$'' play a key role in the theory of noncommutative projective schemes studied by Artin and Zhang \cite{AZ1}, \cite{AZ2}. 
	
	\smallskip
	(3)  Let $H$ be a Hopf algebra over $k$. Then, the category $H-Mod$ of left $H$-modules is monoidal, with $H$-action on the tensor product  given by $h(m\otimes n):=h_{(1)}m\otimes h_{(2)}n$ for $h\in H$, 
	$m\in M$, $n\in N$ where $M$, $N\in H-Mod$. Let $A$ be an $H$-module algebra, i.e., a monoid object in $H-Mod$. Then, $A\otimes_k\_\_:H-Mod\longrightarrow H-Mod$ is a monad on $H-Mod$ that is exact and preserves colimits. If $T:\mathbb Q\longrightarrow Alg(H-Mod)$ is a functor taking values in the category $Alg(H-Mod)$ of monoids in $H-Mod$, it is clear that
	\begin{equation}
		\mathscr U:\mathbb Q\longrightarrow Mnd(H-Mod)\qquad x\mapsto T(x)\otimes_k\_\_:H-Mod\longrightarrow H-Mod
	\end{equation} determines a monad quiver over $H-Mod$. For any $x\in \mathbb Q$, the category $EM_{\mathscr U_x}$ of $\mathscr U_x$-modules takes values in the   category of left $T(x)$-module objects in $H-Mod$.
	
	\smallskip
	(4) Let $H$ be a Hopf algebra over $k$ and let $Comod-H$ be the category of right $H$-comodules. If $A$ is a right $H$-comodule algebra, the category  $Mod_A^H$of 
	right $(A,H)$-Hopf modules has been extensively studied in the literature (see, for instance, \cite{BBR0}, \cite{CMZ}, \cite{CMIZ}, \cite{CG}).  An object $M\in Mod_A^H$ has a right $A$-module structure and a right $H$-comodule structure that are compatible in the sense that \begin{equation} (ma)_{(0)}\otimes (ma)_{(1)}=m_{(0)}a_{(0)}\otimes m_{(1)}a_{(1)} \qquad m\in M, a\in A
	\end{equation} We know that $Mod_A^H$ is a Grothendieck category (see \cite[$\S$ 1]{CG}). For any right $H$-comodule algebra
	$B$ and any $M\in Mod_A^H$, it may be verified that $B\otimes M\in Mod_A^H$ with $A$-action $(b\otimes m)\cdot a:=b\otimes ma$ and $H$-coaction $(b\otimes m)_{(0)}\otimes 
	(b\otimes m)_{(1)}=b_{(0)}\otimes m_{(0)}\otimes b_{(1)}m_{(1)}$ for $a\in A$, $b\in B$ and $m\in M$. Accordingly, any such $B\otimes_k\_\_:Mod_A^H\longrightarrow Mod_A^H$ is a monad that is exact and preserves colimits. Its Eilenberg-Moore category consists of right $(B^{op}\otimes A,H)$-Hopf modules. 
	
	\smallskip If $T:\mathbb Q\longrightarrow Alg(Comod-H)$ is a functor taking values in the category $Alg(Comod-H)$ of right $H$-comodule algebras, we see that 
	$
	\mathscr U:\mathbb Q\longrightarrow Mnd(Mod_A^H)$, $x\mapsto T(x)\otimes_k\_\_:Mod_A^H\longrightarrow Mod_A^H
	$ determines a monad quiver on $Mod_A^H$. For any $x\in \mathbb Q$, the category $EM_{\mathscr U_x}$ of $\mathscr U_x$-modules takes values in the category of right $(T(x)^{op}\otimes A,H)$-Hopf modules.

	\smallskip
	(5) Let $(\mathcal D,\otimes)$ be a multitensor category, i.e., a locally finite $k$-linear abelian rigid monoidal category (see \cite[$\S$ 4.1]{Et}). Let $\mathcal L$ be a  locally
	finite $k$-linear abelian category that carries the structure $\otimes:\mathcal D\times \mathcal L\longrightarrow \mathcal L$ of a $\mathcal D$-module category with $\_\_\otimes \_\_$ being exact in the first variable (see \cite[$\S$ 7.3]{Et}). In this situation, it can be shown (see \cite[$\S$ 4.2.1, $\S$ 7.3]{Et}) that the functors $\otimes:\mathcal D\times \mathcal D\longrightarrow \mathcal D$ and $\otimes:\mathcal D\times \mathcal L\longrightarrow \mathcal L$
	are exact in both variables. As such, if $A\in Alg(\mathcal D)$ is a monoid object in $\mathcal D$, the functor $A\otimes\_\_:\mathcal L\longrightarrow \mathcal L$ determines a monad
	on $\mathcal L$ that is exact. 
	
	\smallskip
	In this setup, the category $\mathcal L$ is locally finite (see \cite[$\S$ 1.8]{Et}) and therefore does not contain arbitrary direct sums. Accordingly, we consider   the ind-completion 
	$Ind(\mathcal L)$ of $\mathcal L$. Since $\mathcal L$ is essentially small, $Ind(\mathcal L)$ must be a Grothendieck category (see \cite[Theorem 8.6.5]{KS}). For $A\in Alg(\mathcal D)$, 
	the monad $A\otimes\_\_:\mathcal L\longrightarrow \mathcal L$ extends canonically to a monad $A\otimes\_\_:Ind(\mathcal L)\longrightarrow Ind(\mathcal L)$ on $Ind(\mathcal L)$. Since
	$A\otimes\_\_:\mathcal L\longrightarrow \mathcal L$  is exact, so is  $A\otimes\_\_:Ind(\mathcal L)\longrightarrow Ind(\mathcal L)$  (see \cite[Corollary 8.6.8]{KS}).  By the universal property of the ind-completion, we know that the extension $A\otimes\_\_:Ind(\mathcal L)\longrightarrow Ind(\mathcal L)$ preserves filtered colimits. Since every colimit can be expressed as a combination of a finite colimit and a filtered colimit (see, for instance, \cite[Tag 002P]{Stacks}), it now follows that $A\otimes\_\_:Ind(\mathcal L)\longrightarrow Ind(\mathcal L)$ preserves all colimits. Now if 
	$T:\mathbb Q\longrightarrow Alg(\mathcal D)$ is any functor, we note that $\mathscr U:\mathbb Q\longrightarrow Mnd(Ind(\mathcal L))$, $x\mapsto T(x)\otimes\_\_:Ind(\mathcal L)
	\longrightarrow Ind(\mathcal L)$ gives a monad quiver on $Ind(\mathcal L)$.
	
	\smallskip
	This particular setup of a locally finite module category $\mathcal L$ over a multitensor category $\mathcal D$ is especially interesting, since it has a large number of naturally occurring examples
	in the literature (see \cite[$\S$ 7.4]{Et}).
	
	\smallskip
	(a) Let $(\mathcal E,\otimes)$  be a multitensor category and let $(\mathcal D,\otimes)$ be a multitensor subcategory. Then, $\mathcal E$ carries the structure of a $\mathcal D$-module category in an obvious manner. More generally, if $F:(\mathcal D,\otimes)\longrightarrow (\mathcal E,\otimes)$ is a tensor functor between multitensor categories, then $\mathcal E$ carries the structure of a $\mathcal D$-module category with $X\otimes Y:=F(X)\otimes Y$ for $X\in \mathcal D$, $Y\in \mathcal E$. 
	
	\smallskip
	(b) Let $G$ be a finite group. Then,  the category $Rep(G)$ of finite dimensional representations of $G$ over a field $k$ is a multitensor category (see \cite[$\S$ 4.1.2]{Et}). If $H\subseteq G$ is a subgroup, then the restriction 
	$Rep(G)\longrightarrow Rep(H)$ is a tensor functor, which makes $Rep(H)$ into a $Rep(G)$-module category.
	
	\smallskip
	(c) Let $G$ be a finite group and let $Vec_G$ be the category of finite dimensional $G$-graded $k$-vector spaces. Then, $Vec_G$ is a multitensor category (see \cite[$\S$ 4.1.2]{Et}). A  module category $\mathcal L$ over $Vec_G$ is a category with a $G$-action, i.e., there are autoequivalences (see \cite[$\S$ 7.4.10]{Et}) $F_g:\mathcal L\longrightarrow \mathcal L$, $g\in G$ along with isomorphisms
	\begin{equation*}
		\eta_{g,h}:F_g\circ F_h\longrightarrow F_{gh} \qquad g,h\in G
	\end{equation*} satisfying $\eta_{gh,k}\circ \eta_{g,h}=\eta_{g,hk}\circ \eta_{h,k}$ for $g,h,k\in G$.
	\subsection{Cis-modules over a monad quiver}
	\begin{defn}\label{D4.2} Let $\mathscr U:\mathbb Q=(\mathbb V,\mathbb  E)\longrightarrow Mnd(\mathcal C)$ be a monad quiver over $\mathcal C$. A cis-module $\mathscr M$ over $\mathscr U$ consists of a collection $\{\mathscr M_x\in EM_{\mathscr U_x}\}_{x\in \mathbb V}$ connected by morphisms $\mathscr M_\phi: \mathscr M_x\longrightarrow \phi_*\mathscr M_y$ in $EM_{\mathscr U_x}$ (equivalently, morphisms $\mathscr M^\phi:\phi^*\mathscr M_x\longrightarrow \mathscr M_y$ in $EM_{\mathscr U_y}$) for each edge $\phi: x\longrightarrow y$ in $\mathbb E$ such that $\mathscr M_{id_x}=id_{\mathscr M_x}$ for each $x\in \mathbb V$ and 
		$\phi_*(\mathscr M_\psi)\circ \mathscr M_\phi=\mathscr M_{\psi\phi}:\mathscr M_x\xrightarrow{\mathscr M_\phi} \phi_*\mathscr M_y
		\xrightarrow{\phi_*(\mathscr M_\psi)}\phi_*\psi_*\mathscr M_z$ (equivalently, $\mathscr M^\psi\circ \psi^*(\mathscr M^\phi)=\mathscr M^{\psi\phi}$)
		for each pair of composable morphisms $x\xrightarrow{\phi}y\xrightarrow{\psi}z$ in $\mathbb Q$.
		
		\smallskip
		A morphism $\xi:\mathscr M\longrightarrow \mathscr M'$ of cis-modules over $\mathscr U$ consists of morphisms $\xi_x:\mathscr M_x\longrightarrow \mathscr M'_x$ for each $x\in \mathbb V$ such that $\mathscr M'_\phi\circ \xi_x=\phi_*(\xi_y)\circ \mathscr M_\phi $ (equivalently, $\mathscr M'^\phi\circ \phi^*(\xi_x)=\xi_y\circ \mathscr M^\phi$) for each edge $\phi: x\longrightarrow y$ in $\mathbb E$. We denote the category of cis-modules over $\mathscr U$ by
		$Mod^{cs}-\mathscr U$.
		
		\smallskip
		Additionally, we say that  $\mathscr M \in Mod^{cs}-\mathscr U$  is cartesian if for each edge $x \xrightarrow{\psi} y$ in $\mathbb E$, the morphism $\mathscr M^{\psi} : \psi^{*}\mathscr M_x \longrightarrow \mathscr M_y$ is an isomorphism in $EM_{\mathscr U_y}$. We denote by $Mod_{c}^{cs}-\mathscr U$ the full subcategory of $Mod^{cs}-\mathscr U$ consisting of cartesian cis-modules. 
	\end{defn}
	
	In this subsection, we assume that the functor $\mathscr U:\mathbb Q\longrightarrow Mnd(\mathcal C)$ takes values in monads which are exact and preserve colimits. From the definition in \eqref{forg4} and the proof of Proposition \ref{L3.1}, it is now clear that the restriction functors $\phi_*=\mathscr U(\phi)_*:EM_{\mathscr U_y}\longrightarrow EM_{\mathscr U_x}
	$ are exact for each edge $\phi:x\longrightarrow y$. Further, $Mod^{cs}-\mathscr U$ becomes an abelian category, with kernel $Ker(\xi)_x=Ker(\xi_x)$ and
	$Coker(\xi)_x=Coker(\xi_x)$ computed pointwise for any morphism $\xi:\mathscr M\longrightarrow \mathscr M'$. 
	
	\smallskip
	For an object $\mathscr M\in Mod^{cs}-\mathscr U$, we now set 
	\begin{equation}\label{el45t}
		el_G(\mathscr M):=\underset{x\in \mathbb V}{\coprod}EM_{\mathscr U_x}(\mathscr U_xG,\mathscr M_x)=\underset{x\in \mathbb V}{\coprod}\mathcal C(G,\mathscr M_x)
	\end{equation} From \eqref{el45t} it is clear that for any subobject $\mathscr M'\subseteq \mathscr M$ in $Mod^{cs}-\mathscr U$, we must have
	$el_G(\mathscr M')\subseteq el_G(\mathscr M)$. Additionally, since $\mathscr U_xG$ is a generator for $EM_{\mathscr U_x}$, we see that
	the subobject $\mathscr M'\subseteq \mathscr M$ equals $\mathscr M$ if and only if $el_G(\mathscr M')=el_G(\mathscr M)$. 
	
	\smallskip
	We will now use an argument similar to our previous work in \cite{Ban}, \cite{BBR} which is motivated by the work of Estrada and Virili \cite{EV}  (see also Enochs and Estrada \cite{EE}, \cite{f16}). For this, we start by fixing some $\zeta\in el_G(\mathscr M)$, i.e., $\zeta:\mathscr U_xG\longrightarrow \mathscr M_x$ is a morphism in $EM_{\mathscr U_x}$ for some $x\in \mathbb V$. For each $y\in \mathbb V$, we set
	\begin{equation}\label{start4}
		\mathscr P_y:=Im\left(\underset{\psi\in \mathbb Q(x,y)}{\bigoplus}\psi^*\mathscr U_xG\xrightarrow{\psi^*\zeta}\psi^*\mathscr M_x\xrightarrow{\mathscr M^\psi}\mathscr M_y\right)=\underset{\psi\in \mathbb Q(x,y)}{\sum}Im\left(\psi^*\mathscr U_xG\xrightarrow{\psi^*\zeta}\psi^*\mathscr M_x\xrightarrow{\mathscr M^\psi}\mathscr M_y\right)\in EM_{\mathscr U_y}
	\end{equation} For each $y\in \mathbb V$, let $\iota_y: \mathscr P_y\hookrightarrow \mathscr M_y$ be the inclusion. For each $\psi\in 
	\mathbb Q(x,y)$, we have a canonical morphism $\zeta'_\psi:\psi^*\mathscr U_xG\longrightarrow\mathscr P_y$ determined by \eqref{start4}.
	
	\begin{thm}\label{P4.5}
		For an edge $y\xrightarrow{\phi}z$ in $\mathbb{Q},$ the morphism $\mathscr{M}_{\phi} : \mathscr{M}_{y} \longrightarrow \phi_{*}\mathscr{M}_{z}$ restricts to a morphism $\mathscr{P}_{\phi} : \mathscr{P}_{y} \longrightarrow \phi_{*}\mathscr P_z$ such that 
		\begin{equation}
			\phi_*(\iota_z)\circ\mathscr P_\phi=\mathscr M_\phi\circ\iota_y
		\end{equation}
		As such, the objects $\{\mathscr{P}_x \in EM_{\mathscr{U}_{x}}\}_{x \in \mathbb{V}}$ together determine a subobject $\mathscr{P} \subseteq \mathscr{M}$ in $Mod^{cs}-\mathscr U$.
	\end{thm}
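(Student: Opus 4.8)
The plan is to build the restriction $\mathscr{P}_\phi$ for each edge $\phi:y\to z$ by checking that the composite $\mathscr{M}_\phi\circ\iota_y:\mathscr{P}_y\to\phi_*\mathscr{M}_z$ factors through the subobject $\phi_*(\iota_z):\phi_*\mathscr{P}_z\hookrightarrow\phi_*\mathscr{M}_z$. First note that $\phi_*(\iota_z)$ really is a monomorphism, since $\iota_z$ is monic in $EM_{\mathscr{U}_z}$ and $\phi_*=\mathscr{U}(\phi)_*$ is exact. Since in an abelian category $\phi_*(\iota_z)$ is the kernel of its own cokernel, a morphism factors through it as soon as its precomposite with some epimorphism does; hence it suffices to produce the factorization after precomposing $\mathscr{M}_\phi\circ\iota_y$ with the canonical epimorphism $\bigoplus_{\psi\in\mathbb{Q}(x,y)}\psi^*\mathscr{U}_xG\twoheadrightarrow\mathscr{P}_y$ (which is epic because, by \eqref{start4}, $\mathscr{P}_y$ is the image of the induced map out of this direct sum) whose $\psi$-component is $\zeta'_\psi$, so that $\iota_y\circ\zeta'_\psi=\mathscr{M}^\psi\circ\psi^*\zeta$. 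Thus I reduce to showing, for each path $\psi\in\mathbb{Q}(x,y)$, that the composite
\[
\psi^*\mathscr{U}_xG\xrightarrow{\ \psi^*\zeta\ }\psi^*\mathscr{M}_x\xrightarrow{\ \mathscr{M}^\psi\ }\mathscr{M}_y\xrightarrow{\ \mathscr{M}_\phi\ }\phi_*\mathscr{M}_z
\]
factors through $\phi_*(\iota_z)$.

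For this, observe that for a path $\psi:x\to y$ and the edge $\phi:y\to z$ the composite $\phi\psi:x\to z$ is again a path in $\mathbb{Q}$, so the cocycle condition for the $\mathscr{U}$-module $\mathscr{M}$ (Definition \ref{D4.2}) gives $\mathscr{M}^{\phi\psi}=\mathscr{M}^\phi\circ\phi^*(\mathscr{M}^\psi)$. Transposing this identity across the adjunction $\phi^*\dashv\phi_*$, and using $\mathscr{M}^\phi=\varepsilon\circ\phi^*(\mathscr{M}_\phi)$ for the counit $\varepsilon$, one identifies $\mathscr{M}_\phi\circ\mathscr{M}^\psi$ with $\phi_*(\mathscr{M}^{\phi\psi})\circ\eta_{\psi^*\mathscr{M}_x}$, where $\eta:\mathrm{id}\to\phi_*\phi^*$ is the unit. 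Substituting $\psi^*\zeta$ and applying naturality of $\eta$ to the morphism $\psi^*\zeta$ then yields
\[
\mathscr{M}_\phi\circ\mathscr{M}^\psi\circ\psi^*\zeta=\phi_*\bigl(\mathscr{M}^{\phi\psi}\circ(\phi\psi)^*\zeta\bigr)\circ\eta_{\psi^*\mathscr{U}_xG}.
\]
But the definition of $\mathscr{P}_z$ in \eqref{start4}, applied to the path $\phi\psi\in\mathbb{Q}(x,z)$, says exactly that $\mathscr{M}^{\phi\psi}\circ(\phi\psi)^*\zeta$ factors as $\iota_z\circ\zeta'_{\phi\psi}$. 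Applying the exact functor $\phi_*$ to this factorization shows the displayed composite factors through $\phi_*(\iota_z)$, which is what was needed.

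Running over all $\psi\in\mathbb{Q}(x,y)$ and assembling, I obtain for each edge $\phi:y\to z$ a unique morphism $\mathscr{P}_\phi:\mathscr{P}_y\to\phi_*\mathscr{P}_z$ in $EM_{\mathscr{U}_y}$ with $\phi_*(\iota_z)\circ\mathscr{P}_\phi=\mathscr{M}_\phi\circ\iota_y$, uniqueness being forced by $\phi_*(\iota_z)$ monic. Since the data defining a $\mathscr{U}$-module in Definition \ref{D4.2} consists of the objects together with the structure maps on edges, the collection $\{\mathscr{P}_x\}_{x\in\mathbb{V}}$ with $\{\mathscr{P}_\phi\}_{\phi\in\mathbb{E}}$ determines a $\mathscr{U}$-module $\mathscr{P}$; the remaining normalization and cocycle identities for $\mathscr{P}$ follow from those for $\mathscr{M}$ by pulling back along the componentwise monomorphisms $\iota_x$. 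The identities $\phi_*(\iota_z)\circ\mathscr{P}_\phi=\mathscr{M}_\phi\circ\iota_y$ then say precisely that $\iota:=\{\iota_x\}_{x\in\mathbb{V}}$ is a morphism $\mathscr{P}\to\mathscr{M}$ in $Mod-\mathscr{U}$, and since kernels in $Mod-\mathscr{U}$ are computed pointwise and each $\iota_x$ is monic, $\iota$ is a monomorphism, so $\mathscr{P}\subseteq\mathscr{M}$ is a subobject.

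The only delicate point I anticipate is the adjunction bookkeeping in the middle paragraph: transposing the cocycle identity correctly and tracking the unit $\eta$ and counit $\varepsilon$ of $\phi^*\dashv\phi_*$, so as to rewrite $\mathscr{M}_\phi\circ\mathscr{M}^\psi\circ\psi^*\zeta$ in the form $\phi_*(-)\circ\eta$ with $(-)$ one of the maps defining $\mathscr{P}_z$. Everything else is formal: exactness of $\phi_*$, the epi--mono factorization property of abelian categories, and the pointwise computation of kernels in $Mod-\mathscr{U}$.
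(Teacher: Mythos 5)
Your proof is correct and follows essentially the same route as the paper: reduce to the canonical epimorphism $\bigoplus_{\psi\in\mathbb Q(x,y)}\psi^*\mathscr U_xG\twoheadrightarrow\mathscr P_y$, use the cocycle identity to recognize $\mathscr M^{\phi\psi}\circ\phi^*\psi^*\zeta$ as one of the maps defining $\mathscr P_z$, and transpose across the adjunction $\phi^*\dashv\phi_*$. The only (harmless) difference is that you verify the factorization by precomposing directly with the epimorphism and using that a monomorphism is the kernel of its cokernel, whereas the paper tests against the projective generator $\mathscr U_yG$ and lifts through the epimorphism using projectivity.
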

	\begin{proof}
		Since $\iota_{z} : \mathscr{P}_{z} \hookrightarrow \mathscr{M}_{z}$ is a monomorphism and $\phi_{*}$ is a right adjoint, $\phi_{*}(\iota_z) $ is also a monomorphism. We claim that the composition $\mathscr P_y\xrightarrow{\iota_y}\mathscr M_y \xrightarrow{\mathscr M_\phi}\phi_*(\mathscr M_z)$ factors through $\iota_{z} : \mathscr{P}_{z} \hookrightarrow \mathscr{M}_{z}$.  Since $\mathscr{U}_yG$ is a projective generator for the Grothendieck category $EM_{\mathscr U_y}$, it suffices  (see \cite[Lemma $3.2$]{Ban}) to show that for any morphism $\tau : \mathscr{U}_{y}G \longrightarrow \mathscr{P}_y$, there exists a morphism $\tau' : \mathscr{U}_{y}G \longrightarrow \phi_{*}\mathscr{P}_z$ such that $\phi_{*}(\iota_z) \circ \tau' = \mathscr{M}_{\phi} \circ \iota_{y}\circ \tau.$ By \eqref{start4}, there is an epimorphism \begin{equation}
			\underset{\psi\in \mathbb Q(x,y)}{\bigoplus} \zeta'_{\psi}: \underset{\psi\in \mathbb Q(x,y)}{\bigoplus} \psi^{*}\mathscr{U}_{x}G \longrightarrow \mathscr{P}_y
		\end{equation}
		in $EM_{\mathscr U_y}$. Since $\mathscr{U}_{y}G$ is projective in $EM_{\mathscr U_y}$, we can lift the morphism $\tau : \mathscr U_yG \longrightarrow \mathscr P_y$ to  $\tau'' : \mathscr U_yG \longrightarrow \underset{\psi\in \mathbb Q(x,y)}{\bigoplus}  \psi^{*}\mathscr U_xG$ such that 
		$
		\tau = \left(\underset{\psi\in \mathbb Q(x,y)}{\bigoplus} \zeta'_{\psi}\right) \circ \tau''
		$.
		By \eqref{start4}, we know that for each $\psi\in \mathbb Q(x,y)$, the composition $\psi^{*}\mathscr U_{x}G \xrightarrow{\zeta'_{\psi}} \mathscr P_y \xrightarrow{\iota_{y}} \mathscr M_y$ factors through $\psi^{*}\mathscr M_x$ as 
		$
		\iota_y \circ \zeta'_{\psi} = \mathscr{M}^\psi \circ \psi^{*}\zeta.
		$ 
		Then applying $\phi^{*}$ and composing with $\mathscr M^{\phi},$ we get \begin{equation}
			\mathscr{M}^{\phi} \circ \phi^{*}(\iota_{y}) \circ \phi^{*}(\zeta'_{\psi}) = \mathscr{M}^{\phi} \circ \phi^{*}(\mathscr M^{\psi}) \circ \phi^{*}(\psi^{*}\zeta) = \mathscr{M}^{\phi\psi} \circ \phi^{*}\psi^{*}\zeta
		\end{equation}
		which clearly factors through $\iota_{z} : \mathscr{P}_z \longrightarrow \mathscr M_z.$ Since $(\phi^{*}, \phi_{*})$ is an adjoint pair, it follows that the composition $\psi^*\mathscr{U}_xG \xrightarrow{\zeta'_\psi}\mathscr P_y\xrightarrow{\iota_y}\mathscr M_y \xrightarrow{\mathscr{M}_{\phi}}\phi_{*}\mathscr{M}_z$ factors through   $\phi_{*}(\iota_{z}) : \phi_{*} \mathscr P_{z} \longrightarrow \phi_{*}\mathscr{M}_{z}.$ The result is now clear.
	\end{proof}

	\begin{lem}\label{L4.5dk}
		Let $\zeta'_{1} : \mathscr U_{x}G \longrightarrow \mathscr{P}_x$ be the canonical morphism corresponding to the identity map in $\mathbb{Q}(x,x)$. Then, for any $y \in \mathbb{V}$, we have 
		\begin{equation}
			\mathscr{P}_y = Im\left(\underset{\psi\in \mathbb Q(x,y)}{\bigoplus}\psi^*\mathscr U_xG\xrightarrow{\psi^*\zeta'_1}\psi^*\mathscr P_x \xrightarrow{\mathscr P^\psi}\mathscr P_y\right)
		\end{equation}
	\end{lem}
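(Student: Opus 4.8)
The plan is to identify, as a single morphism $\bigoplus_{\psi\in\mathbb Q(x,y)}\psi^*\mathscr U_xG\to\mathscr P_y$, the two expressions on the two sides of the asserted equality: on one side the canonical epimorphism $\bigoplus_{\psi}\zeta'_\psi$ built into the definition of $\mathscr P_y$ in \eqref{start4} and already used in the proof of Proposition \ref{P4.5}, and on the other side the morphism whose $\psi$-component is $\mathscr P^\psi\circ\psi^*\zeta'_1$. Once these are shown to agree, the lemma is immediate, since the image of $\bigoplus_{\psi}\zeta'_\psi$ is, by construction, precisely $\mathscr P_y$.

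First I would pin down $\zeta'_1$ explicitly. Taking $y=x$ in \eqref{start4}, the summand indexed by the identity $id_x\in\mathbb Q(x,x)$ is $Im(\zeta)$, using the identifications $(id_x)^*\cong\mathrm{Id}_{EM_{\mathscr U_x}}$ and $\mathscr M^{id_x}=\mathrm{id}_{\mathscr M_x}$. Hence $Im(\zeta)\subseteq\mathscr P_x$, so $\zeta:\mathscr U_xG\to\mathscr M_x$ factors as $\zeta=\iota_x\circ\zeta'_1$ through a morphism $\zeta'_1:\mathscr U_xG\to\mathscr P_x$, and by construction this factorization is exactly the canonical morphism attached to $id_x$, as in the statement.

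Next, for a fixed path $\psi\in\mathbb Q(x,y)$ I would prove $\mathscr P^\psi\circ\psi^*\zeta'_1=\zeta'_\psi$ as morphisms $\psi^*\mathscr U_xG\to\mathscr P_y$. Both land in $\mathscr P_y$, and $\iota_y$ is a monomorphism, so it suffices to compare them after composing with $\iota_y$. Since $\iota:\mathscr P\hookrightarrow\mathscr M$ is a morphism in $Mod-\mathscr U$ (Proposition \ref{P4.5}), the naturality relation of Definition \ref{D4.2} holds for every path $\psi$, giving $\iota_y\circ\mathscr P^\psi=\mathscr M^\psi\circ\psi^*(\iota_x)$. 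Combining this with $\iota_x\circ\zeta'_1=\zeta$ and with the defining relation $\iota_y\circ\zeta'_\psi=\mathscr M^\psi\circ\psi^*\zeta$ of \eqref{start4}, we obtain
\[
\iota_y\circ\mathscr P^\psi\circ\psi^*\zeta'_1=\mathscr M^\psi\circ\psi^*(\iota_x\circ\zeta'_1)=\mathscr M^\psi\circ\psi^*\zeta=\iota_y\circ\zeta'_\psi,
\]
and cancelling $\iota_y$ yields the claim.

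Summing the claim over $\psi\in\mathbb Q(x,y)$ shows that the morphism in the statement coincides with $\bigoplus_{\psi}\zeta'_\psi$, whose image is $\mathscr P_y$, which is the desired equality. I expect the only points requiring genuine care to be the normalization making $(id_x)^*$ literally the identity functor (so that $\zeta'_1$ really is the canonical morphism for $id_x$ and $\zeta$ genuinely factors through $\iota_x$), and the upgrade of Proposition \ref{P4.5} from single edges of $\mathbb Q$ to arbitrary composable paths $\psi$ — the latter being purely formal, as it follows from $\iota$ being a morphism in $Mod-\mathscr U$ together with the functoriality of the structure morphisms $\mathscr M^{(-)}$ and $\mathscr P^{(-)}$.
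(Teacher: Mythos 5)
Your proposal is correct and follows essentially the same route as the paper: both arguments rest on the factorization $\iota_x\circ\zeta'_1=\zeta$, the commutative square $\iota_y\circ\mathscr P^\psi=\mathscr M^\psi\circ\psi^*(\iota_x)$ coming from $\iota$ being a morphism in $Mod-\mathscr U$, and the fact that $\iota_y$ is monic, after which \eqref{start4} gives the claim. The only cosmetic difference is that you establish the morphism-level identity $\mathscr P^\psi\circ\psi^*\zeta'_1=\zeta'_\psi$ and then sum, whereas the paper concludes directly at the level of images.
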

	\begin{proof}
		Let $x\xrightarrow{\psi}y$ be an edge in $\mathbb Q$. We consider the following commutative diagram
		\begin{equation}
			\begin{CD}
				\psi^{*}\mathscr{U}_xG @>\psi^{*}\zeta'_{1} >> \psi^{*}\mathscr P_x @>\mathscr P^{\psi} >> \mathscr P_{y} \\
				@. @VV\psi^{*}(\iota_{x})V @VV\iota_{y}V \\
				@.\psi^{*}\mathscr M_x @>\mathscr M^{\psi}>> \mathscr{M}_y \\
			\end{CD}
		\end{equation} 
		Clearly, $\iota_x \circ \zeta'_{1} = \zeta$. Applying $\psi^{*}$, composing with $\mathscr{M}^{\psi}$ and using the fact that $\iota_y$ is monic, we obtain
		\begin{equation}
			Im(\mathscr{M}^{\psi} \circ \psi^{*}\zeta) = Im(\mathscr{M}^{\psi} \circ \psi^{*} (\iota_{x})\circ \psi^{*}\zeta'_{1}) = Im(\iota_{y} \circ \mathscr{P}^{\psi}\circ \psi^{*}\zeta'_{1}) = Im(\mathscr{P}^{\psi} \circ \psi^{*}\zeta'_{1})
		\end{equation}
		The result now follows from \eqref{start4}.
	\end{proof}
	
	We now fix an infinite regular cardinal $\gamma$ such that
	\begin{equation}\label{crd4rf}
		\gamma\geq sup\{\mbox{$|Mor(\mathbb Q)|$, $\kappa(G)$, $||\mathscr U_yG||^G$, $y\in Ob(\mathbb Q)$} \}
	\end{equation}
	
	\begin{lem}\label{L4.6tte}
		We have $|el_G(\mathscr P)|\leq  \gamma^{\kappa(G)}$.
	\end{lem}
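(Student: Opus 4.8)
The plan is to bound $|el_G(\mathscr P)| = \left|\coprod_{y\in \mathbb V} \mathcal C(G,\mathscr P_y)\right|$ by estimating each $||\mathscr P_y||^G$ separately and then summing over the vertices $y$, using that $|\mathbb V| \leq |Mor(\mathbb Q)| \leq \gamma$. So the crux is to show $||\mathscr P_y||^G \leq \gamma^{\kappa(G)}$ for each fixed $y$, after which the total is at most $\gamma \times \gamma^{\kappa(G)} = \gamma^{\kappa(G)}$ since $\gamma$ is infinite and $\gamma \leq \gamma^{\kappa(G)}$.

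To estimate $||\mathscr P_y||^G$, I would work directly from the defining description \eqref{start4}: there is an epimorphism $\bigoplus_{\psi\in \mathbb Q(x,y)}\psi^*\mathscr U_xG \twoheadrightarrow \mathscr P_y$ in $EM_{\mathscr U_y}$. Since $G$ is projective, epimorphisms in $EM_{\mathscr U_y}$ do not increase $||\cdot||^G$ (as in the proof of Theorem \ref{T2.2}, using that $EM_{\mathscr U_y}((\mathscr U_yG,\theta G),\_\_)\cong \mathcal C(G,\_\_)$ is exact), so it suffices to bound $\left|\left|\bigoplus_{\psi\in \mathbb Q(x,y)}\psi^*\mathscr U_xG\right|\right|^G$. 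Here I first need to control $||\psi^*\mathscr U_xG||^G$ for a single edge $\psi$. Recall from \eqref{4.4dk} that $\psi^*(\mathscr U_xG,\theta G) = (\mathscr U_yG,\theta' G)$, so in fact $\psi^*\mathscr U_xG \cong \mathscr U_yG$ and hence $||\psi^*\mathscr U_xG||^G = ||\mathscr U_yG||^G \leq \gamma$ by the choice of $\gamma$ in \eqref{crd4rf}. Then applying Lemma \ref{L2.1} to the family $\{\psi^*\mathscr U_xG\}_{\psi\in \mathbb Q(x,y)}$ with $\mu = \gamma$ and $\lambda = \max\{|\mathbb Q(x,y)|, \kappa(G)\} \leq \gamma$ (the index set has cardinality $\leq |Mor(\mathbb Q)| \leq \gamma$) — noting that Lemma \ref{L2.1} is stated for $\mathcal C$, but by Theorem \ref{L3.1} colimits and the generator in $EM_{\mathscr U_y}$ behave exactly as in a Grothendieck category, so the same cardinality argument applies — gives $\left|\left|\bigoplus_{\psi}\psi^*\mathscr U_xG\right|\right|^G \leq \gamma^{\kappa(G)} \times \gamma^{\kappa(G)} = \gamma^{\kappa(G)}$.

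Putting these together: $||\mathscr P_y||^G \leq \gamma^{\kappa(G)}$ for every $y\in \mathbb V$, and therefore
\begin{equation}
|el_G(\mathscr P)| = \left|\underset{y\in \mathbb V}{\coprod}\mathcal C(G,\mathscr P_y)\right| \leq |\mathbb V| \times \sup_{y\in \mathbb V}||\mathscr P_y||^G \leq \gamma\times \gamma^{\kappa(G)} = \gamma^{\kappa(G)}.
\end{equation}

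The main obstacle I anticipate is purely bookkeeping: making sure Lemma \ref{L2.1} can legitimately be invoked inside the Eilenberg-Moore category $EM_{\mathscr U_y}$ rather than in $\mathcal C$ itself. This is fine because the relevant features — $\kappa(G)$-directed colimits of the form $M_S = \varinjlim_{T} M_T$, preservation of such colimits by $EM_{\mathscr U_y}(\mathscr U_yG,\_\_)$, and the existence of a surjection from the direct sum onto the filtered colimit in the category of abelian groups — all hold in $EM_{\mathscr U_y}$ by Theorem \ref{L3.1} and Proposition \ref{P3.3}, with $(\mathscr U_yG,\theta G)$ playing the role of $G$ and, crucially, $||\mathscr U_yG||^G$ replacing $||G||^G$ in the hypotheses. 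Alternatively, one can avoid this subtlety entirely by pushing the computation down to $\mathcal C$: since $U' = \mathscr U_y$ preserves colimits and $G$ is projective, $||\mathscr P_y||^G$ is controlled by the underlying object in $\mathcal C$, and one applies Theorem \ref{T2.2} and Lemma \ref{L2.1} there. Either route is routine; the only real care needed is that every cardinal appearing is $\leq \gamma$, which is exactly what \eqref{crd4rf} was set up to guarantee.
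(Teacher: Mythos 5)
Your proposal is correct and follows essentially the same route as the paper: the paper also bounds each $||\mathscr P_y||^G$ via the epimorphism $\bigoplus_{\psi\in\mathbb Q(x,y)}\psi^*\mathscr U_xG\twoheadrightarrow\mathscr P_y$, the identification $\psi^*\mathscr U_xG=\mathscr U_yG$, projectivity of $\mathscr U_yG$, and the adjunction $EM_{\mathscr U_y}(\mathscr U_yG,\_\_)\cong\mathcal C(G,\_\_)$ to reduce to Lemma \ref{L2.1} in $\mathcal C$, and then sums over $\mathbb V$ using \eqref{crd4rf}. The "alternative" you mention at the end (pushing the Hom-set computation down to $\mathcal C$ via the adjunction) is precisely how the paper resolves the applicability of Lemma \ref{L2.1}, so your anticipated obstacle is handled exactly as in the paper.
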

	
	\begin{proof}
		For each $\psi\in  \mathbb Q(x,y)$, we know that $\psi^*\mathscr U_xG=\mathscr U_yG\in EM_{\mathscr U_y}$. From Lemma \ref{L4.5dk}, it now follows that $\mathscr P_y$ is a quotient of $\underset{\psi\in \mathbb Q(x,y)}{\bigoplus}\mathscr U_yG$. We recall that $\mathscr U_yG$ is projective in $EM_{\mathscr U_y}$. Using Lemma \ref{L2.1} and the assumption in \eqref{crd4rf}, we now see that
		\begin{equation}\label{413ed}
			|EM_{\mathscr U_y}(\mathscr U_yG,\mathscr P_y)|\leq \left\vert EM_{\mathscr U_y}\left(\mathscr U_yG,\underset{\psi\in \mathbb Q(x,y)}{\bigoplus}\mathscr U_yG\right)\right\vert=\left\vert \mathcal C\left(G,\underset{\psi\in \mathbb Q(x,y)}{\bigoplus}\mathscr U_yG\right)\right\vert\leq \gamma^{\kappa(G)}
		\end{equation} From the definition in \eqref{el45t} and the assumption in \eqref{crd4rf}, the result is now clear.
	\end{proof}
	
	\begin{Thm}\label{Th4.7b}
		 $Mod^{cs}-\mathscr U$ is a Grothendieck category.
	\end{Thm}
	
	\begin{proof} Both filtered colimits and finite limits in $Mod^{cs}-\mathscr U$ are computed pointwise at each vertex $x\in \mathbb V$. Hence, they commute with each other and $Mod^{cs}-\mathscr U$ satisfies (AB5). We take $\mathscr M\in Mod^{cs}-\mathscr U$ and some $\zeta\in el_G(\mathscr M)$, given by $\zeta: \mathscr U_xG\longrightarrow \mathscr M_x$ for some $x\in \mathbb V$. We consider the subobject $\mathscr P\subseteq \mathscr M$ corresponding to $\zeta$ as in 
		Proposition \ref{P4.5}. From the definition in \eqref{start4}, we know that $\zeta\in el_G(\mathscr P)$. From Lemma \ref{L4.6tte}, we know that 
		$|el_G(\mathscr P)|\leq  \gamma^{\kappa(G)}$. 
		
		\smallskip
		By Proposition \ref{3.1cq}, each $EM_{\mathscr U_x}$ is a Grothendieck category, and hence well-powered. Since $\mathscr U_xG$ is a generator for 
		$EM_{\mathscr U_x}$, the object $\mathscr M'_x$ for any $\mathscr M'\in Mod^{cs}-\mathscr U$ can be expressed as a quotient of $(\mathscr U_xG)^{EM_{\mathscr U_x}(\mathscr U_xG,\mathscr M'_x)}$ over some subobject. Hence, the isomorphism classes of cis-modules 
		$\mathscr M'$ satisfying $|el_G(\mathscr M')|\leq \gamma^{\kappa(G)}$ form a set. It is now clear that this  collection gives a set of generators
		for $Mod^{cs}-\mathscr U$.
	\end{proof}
	\subsection{Trans-modules over a monad quiver}
	Let $(U, \theta, \eta)$ be monad on $\mathcal C$ that preserves colimits. Since $\mathcal C$ is a Grothendieck category, it follows from  \cite[Proposition 8.3.27]{KS} $U$ has a right adjoint $V$. Then, $V$ is canonically equipped with the structure of a comonad $(V,\delta, \epsilon)$. For monads $(U, \theta, \eta)$ and $(U', \theta', \eta')$ on $\mathcal C$ that preserve colimits,we have comonads $(V,\delta, \epsilon)$ and $(V',\delta',\epsilon')$ on $\mathcal C$ such that $(U,V)$ and $(U',V')$ are pairs of adjoint functors. We now consider a natural transformation $\phi: U\longrightarrow U'$ in $Mnd(\mathcal C).$  By\cite[Lemma 6.5]{MW},  $\phi$ induces a morphism $\bar{\phi}:V'\longrightarrow V$ of comonads $V$ and $V'$ on $\mathcal C$. 
	
	\smallskip
By \cite[\S 2.6]{BBW}), we know that $EM_{U}\cong EM^{V}$ and $EM_{U'}\cong EM^{V'}$. Accordingly, we may treat an object $(M,f_M)\in EM_U$ as an object 
$(M,f^M)\in EM^V$. Suppose that $V$ and $V'$ are exact. There is now a  functor $\hat{\phi}:EM_{U}\cong EM^{V}\longrightarrow EM^{V'}\cong EM_{U'}$ defined by setting
	\begin{equation}\label{eq4.17}
		\hat{\phi}(M):=Eq\bigg(V'M\doublerightarrow {\qquad (V'\bar{\phi}(M))\circ \delta'(M)\qquad}{V'f^{M}} V'VM\bigg).
	\end{equation} for each $(M,f_M)\in EM_U$. The following fact is well known.
	\begin{lem}\label{L4.9} The functor $\hat\phi:EM_U\longrightarrow EM_{U'}$ is right adjoint to 
	$\phi_*:EM_{U'}\longrightarrow EM_U$,  i.e., for each $M\in EM_U$ and $M'\in EM_{U'}$, we have natural isomorphisms
		\begin{equation*} EM_{U}(\phi_{*}(M'), M)\cong EM_{U'}(M', \hat{\phi}(M)) 
		\end{equation*}
	\end{lem}
	\begin{proof}
		Let $f:M'\longrightarrow \hat{\phi}(M)$ be a morphism in $EM_{U'}.$ Since, $EM_{U'}\cong EM^{V'}$, $f$ may be treated as a morphism in $EM^{V'}$. Then it 
		may be verified that the composition $M'\xrightarrow {f} \hat{\phi}(M) \hookrightarrow V'M \xrightarrow{\epsilon'_{M}} M$ is a morphism in $EM^{V}$ and hence a morphism in $EM^{U}$. 
		
		\smallskip
		Conversely, let $g:\phi_{*}(M')\longrightarrow M$ be a morphism in $EM_{U}$. Then $g$ may be treated as a morphism in $EM^{V}$, i.e., $f^{M}\circ g= Vg \circ f^{\phi_{*}M'}= Vg \circ \bar{\phi}M'\circ f^{M'}.$ It follows  that
		\begin{align*}
			V'(f^{M}\circ  g)  \circ f^{M'} &= V'Vg \circ V'\bar{\phi}(M')\circ V'f^{M'}\circ f^{M'}\\\notag &= V'\bar{\phi}M\circ V'V'g\circ \delta'{M'}\circ f^{M'} \\\notag &= V'\bar{\phi}M \circ \delta'{M} \circ V'g \circ f^{M'} 	
		\end{align*}
		From the equalizer in \eqref{eq4.17}, it follows that the morphism $V'g\circ f^{M'}$ factors through $\hat\phi(M).$ Hence, we obtain a morphism in $EM^{V'}(M', \hat{\phi}(M))$ and hence in $EM_{U'}(M', \hat{\phi}(M)).$   It may be verified that these two associations are inverse to each other.
	\end{proof}
	In this subsection, we consider a monad quiver $\mathscr U:\mathbb Q\longrightarrow Mnd(\mathcal C)$ taking values in monads that are exact and preserve colimits. For each vertex $x\in \mathbb V$, 
	let  $\mathscr V_x$ denote the comonad that is right adjoint to the monad $\mathscr U_x$. We will assume that each $\mathscr V_x$ is exact.
	\begin{defn}\label{D4.10}
		A trans-module $\mathscr M$ over $\mathscr U$ consists of a collection $\{\mathscr M_x \in EM_{\mathscr U_x}\}_{x\in\mathbb V}$ connected by morphisms $_\phi\mathscr M: \mathscr M_y \longrightarrow \hat{\phi}\mathscr M_x$ in $EM_{\mathscr U_y}$ (equivalently, morphisms $^{\phi}\mathscr M:\phi_{*}\mathscr M_y \longrightarrow \mathscr M_x$ in $EM_{\mathscr U_x}$) for each edge $\phi:x\longrightarrow y$ in $\mathbb E$ such that for each $x\in\mathbb V$, we have $_{id_x}\mathscr M=id_{\mathscr M_{x}}$, and $\hat{\psi}(_{\phi}\mathscr M)\circ_{\psi}\mathscr M= _{\psi\phi}\mathscr M$ (equivalently, $^{\phi}\mathscr M\circ\phi_{*}(^{\psi}\mathscr M)= ^{\psi\phi}\mathscr M$) for every pair of composable morphisms $x\xrightarrow{\phi}y\xrightarrow{\psi}z$ in $\mathbb E$.
		
		\smallskip
		
		A morphism $\xi: \mathscr M \longrightarrow \mathscr M'$ of trans-modules over $\mathscr U$ is a collection $\{\xi_x:\mathscr M_x\longrightarrow \mathscr M'_x~|~x\in \mathbb V\}$  of morphisms in $EM_{\mathscr U_x}$ such that $_\phi{\mathscr M'}\circ \xi_y = \hat{\phi}(\xi_x) \circ $$ _\phi{\mathscr M}$ (equivalently, $^{\phi}\mathscr M' \circ \phi_{*}(\xi_y)= \xi_x\circ   { }^{\phi}\mathscr M$) for each edge $\phi:x\longrightarrow y$ in $\mathbb E$.
		This determines the category of trans-modules over $\mathscr U$, and we denote it by $Mod^{tr}-\mathscr U.$
		
		\smallskip
		Further, we say that $\mathscr M \in Mod^{tr}-\mathscr U$ is cartesian if for each edge $\phi:x\longrightarrow y$ in $\mathbb E,$ the morphism $_{\phi}\mathscr M:\mathscr M_y\longrightarrow \Hat{\phi}\mathscr M_x$ is an isomorphism in $EM_{\mathscr U_y}.$ This defines the full subcategory $Mod_{c}^{tr}-\mathscr U$ of cartesian trans-modules in $Mod^{tr}-\mathscr U$.
	\end{defn}
	
	Since $\phi_{*}$ is exact and preserves colimits, we see that for any morphism $\xi:\mathscr M\longrightarrow \mathscr M'$ in $Mod^{tr}-\mathscr U$, $Ker(\xi)$ and $Coker(\xi)$ exist in $Mod^{tr}-\mathscr U$ with $Ker(\xi)_x:=Ker(\xi_x)$ and $Coker(\xi)_x:= Coker(\xi_x)$ for each $x\in \mathbb V.$ Hence, $Mod^{tr}-\mathscr U$ becomes an abelian category.
	Now for an object $\mathscr M\in Mod^{tr}-\mathscr U$, we set 
	\begin{equation}\label{eq4.18}
		el_G(\mathscr M):=\underset{x\in \mathbb V}{\coprod}EM_{\mathscr U_x}(\mathscr U_xG,\mathscr M_x)=\underset{x\in \mathbb V}{\coprod}\mathcal C(G,\mathscr M_x)
	\end{equation} 
	We note that for any subobject $\mathscr M'\subseteq \mathscr M$ in $Mod^{tr}-\mathscr U$, we have $el_G(\mathscr M')\subseteq el_G(\mathscr M)$. Since $G$ is a generator for $\mathcal C$, the subobject $\mathscr M'\subseteq \mathscr M$ equals $\mathscr M$ if and only if $el_G(\mathscr M')=el_G(\mathscr M)$.
	We now use arguments similar to Section 4.1 to show the existence of generators in $Mod^{tr}-\mathscr U$. First, we consider an element $\zeta\in el_{G}(\mathscr M).$ Then, there exists some $x\in \mathbb V$ such that $\zeta:\mathscr U_{x}G\longrightarrow \mathscr M_x$ is a morphism in $EM_{\mathscr U_x}$. For each $y\in\mathbb V,$ we set
	\begin{equation}\label{eq4.19}
		\mathscr P_y:= Im \left(\underset{\psi\in \mathbb Q(y,x)}{\bigoplus}\psi_{*}\mathscr U_xG\xrightarrow{\psi_{*}\zeta}\psi_{*}\mathscr M_x\xrightarrow{^\psi\mathscr M}\mathscr M_y\right) = \underset{\psi\in \mathbb Q(y,x)} \sum Im \left(\psi_{*}\mathscr U_xG\xrightarrow{\psi_{*}\zeta}\psi_{*}\mathscr M_x\xrightarrow{^\psi\mathscr M}\mathscr M_y\right)~.
	\end{equation}
	For each $y\in\mathbb V$, we have an inclusion $\iota_y:\mathscr P_y\hookrightarrow \mathscr M_y$. From \eqref{eq4.19}, we have  for each $\psi\in \mathbb Q(y,x)$,  a  morphism $\zeta'_{\psi}: \psi_{*}\mathscr U_xG\longrightarrow \mathscr P_y$. 
	\begin{thm}\label{P4.11}
		The subobjects $\{\mathscr{P}_x \in EM_{\mathscr{U}_{x}}\}_{x \in \mathbb{V}}$ together determine a subobject $\mathscr{P} \subseteq \mathscr{M}$ in $Mod^{tr}-\mathscr U$.	 
	\end{thm}
	\begin{proof}
		Let $\phi:z\longrightarrow y$ be any edge in $\mathbb E$. Since $\iota_z:\mathscr P_{z}\hookrightarrow\mathscr M_z$ is a monomorphism in $EM_{\mathscr U_z}$ and $\hat{\phi}$ is a right adjoint, $\hat{\phi}(\iota_z)$ is a monomorphism in $EM_{\mathscr U_y}.$ To prove that $\mathscr P\subseteq \mathscr M$ in $Mod^{tr}-\mathscr U$, it is enough to show that the morphism $_{\phi}\mathscr M:\mathscr M_y\longrightarrow \hat{\phi}\mathscr M_z$ restricts to a morphism $_{\phi}\mathscr P:\mathscr P_y\longrightarrow \hat{\phi}\mathscr P_z$ such that $_{\phi}\mathscr M\circ \iota_y = \hat{\phi}(\iota_z)\circ~ _{\phi}\mathscr P$. From \cite[Lemma $3.2$]{Ban}, it is sufficient to show that for any morphism $\tau: \mathscr U_yG\longrightarrow \mathscr P_y$, there exists a morphism $\tau':\mathscr U_yG\longrightarrow \hat{\phi}\mathscr P_z$ such that $\hat{\phi}(\iota_z)\circ\tau'=_{\phi}\mathscr M\circ \iota_y\circ\tau.$ From (\ref{eq4.19}), there exists an epimorphism 
		\begin{equation*}
			\underset{\psi\in \mathbb Q(y,x)}{\bigoplus}\zeta_{\psi}':\underset{\psi\in \mathbb Q(y,x)}{\bigoplus}\psi_* \mathscr U_xG\longrightarrow \mathscr P_y\notag
		\end{equation*}
		in $EM_{\mathscr U_y}.$ Since $\mathscr U_yG$ is projective in $EM_{\mathscr U_y},$ the morphism $\tau$ can be lifted to a morphism $\tau'':\mathscr U_yG\longrightarrow \underset{\psi\in \mathbb Q(y,x)}{\bigoplus}\psi_* \mathscr U_xG $ such $\tau=\bigg(\underset{\psi\in \mathbb Q(y,x)}{\bigoplus}\zeta_{\psi}'\bigg)\circ\tau''.$
		We know that the composition $\psi_{*} \mathscr U_xG \xrightarrow{\zeta'_{\psi}}\mathscr P_y\xrightarrow{\iota_y}\mathscr M_y$ factors through $\psi_{*}\mathscr M_x$ as $\iota_y\circ \zeta'_{\psi} = { }^{\psi}\mathscr M\circ \psi_{*}\zeta$. Then, on applying $\phi_{*}$ and composing with $^{\phi}\mathscr M$, we get $^{\phi}\mathscr M\circ \phi_{*}(\iota_y)\circ \phi_{*}(\zeta'_{\psi}) = { }^{\phi}\mathscr M\circ \phi_{*}(^{\psi}\mathscr M)\circ \phi_{*}(\psi_{*}\zeta) = { }^{\psi\phi}\mathscr M \circ (\psi\phi)_{*}(\zeta)$. The last term factors through $\iota_z:\mathscr P_{z}\longrightarrow \mathscr M_{z}$. Further, as $(\phi_{*}, \hat{\phi})$ is a pair of adjoint functor, 	
		the composition $_{\phi}\mathscr M\circ \iota_y\circ \zeta_{\psi}'$ factors through $\hat{\phi}(\iota_z):\hat{\phi}\mathscr P_{z}\longrightarrow \hat{\phi}\mathscr M_{z}$. Therefore, the composition $_{\phi}\mathscr M\circ \iota_y\circ \bigg(\underset{\psi\in \mathbb Q(y,x)}{\bigoplus}\zeta_{\psi}'\bigg)$ also factors through $\phi_{*}(\iota_z):\phi_{*}\mathscr P_{z}\longrightarrow \phi_{*}\mathscr M_{z}$ as
		\begin{equation}\label{eq4.20}
			_{\phi}\mathscr M\circ \iota_y\circ \bigg(\underset{\psi\in \mathbb Q(y,x)}{\bigoplus}\zeta_{\psi}'\bigg) = \hat{\phi}(\iota_z)\circ \xi
		\end{equation}
		for some $\xi :\underset{\psi\in \mathbb Q(y,x)}{\bigoplus}\psi_* \mathscr U_xG \longrightarrow \hat{\phi}\mathscr P_z$ in $EM_{\mathscr U_z}.$ Now on composing (\ref{eq4.20}) with $\tau''$ we obtain 
		\begin{equation}
			_{\phi}\mathscr M\circ \iota_y\circ \bigg(\underset{\psi\in \mathbb Q(y,x)}{\bigoplus}\zeta_{\psi}'\bigg)  \circ \tau''= _{\phi}\mathscr M\circ \iota_y\circ \tau = \hat{\phi}(\iota_z)\circ \xi \circ \tau''.\notag
		\end{equation}
		Finally, we define $\tau' = \xi \circ \tau'' : \mathscr U_yG\longrightarrow \hat{\phi}\mathscr P_z.$ This completes the proof.
	\end{proof}
	We now recall the regular cardinal $\gamma\geq sup\{\mbox{$|Mor(\mathbb Q)|$, $\kappa(G)$, $||\mathscr U_yG||^G$, $y\in Ob(\mathbb Q)$} \}$ as fixed  in (\ref{crd4rf}).
	\begin{lem}\label{L4.12}
		 We have  $|el_{G}(\mathscr P)|\leq \gamma^{\kappa(G)}.$
	\end{lem}
	\begin{proof}
		By \eqref{eq4.19}, $\mathscr P_y$ is an epimorphic image of $\underset{\psi\in \mathbb Q(y,x)}{\bigoplus}\psi_{*}(\mathscr U_{x}G)$ in $EM_{\mathscr U_{y}}$.  Since 
		$\mathscr U_yG$ is projective in $EM_{\mathscr U_y}$, we have
		\begin{equation}
			|EM_{\mathscr U_y}(\mathscr U_yG, \mathscr P_y)|\leq  \left\vert EM_{\mathscr U_y}\left(\mathscr U_yG, \underset{\psi\in \mathbb Q(y,x)}{\bigoplus}\psi_{*}(\mathscr U_xG)\right) \right\vert=  \left\vert\mathcal C\left(G, \underset{\psi\in \mathbb Q(y,x)}{\bigoplus}\psi_{*}(\mathscr U_xG)\right) \right\vert~. \notag
		\end{equation}
		Now, for each edge $\psi:y\longrightarrow x$ in $\mathbb Q$, we note that $\psi_{*}(\mathscr U_xG)=\mathscr U_xG$ as objects of $\mathcal C.$ Finally from Lemma \ref{L2.1}, we obtain 
		\begin{equation}
			\left\vert\mathcal C\left(G, \underset{\psi\in \mathbb Q(y,x)}{\bigoplus}\psi_{*}(\mathscr U_xG)\right) \right\vert =  \left\vert\mathcal C\left(G, \underset{\psi\in \mathbb Q(y,x)}{\bigoplus}\mathscr U_xG\right) \right\vert\leq  \gamma^{\kappa(G)}. 
		\end{equation}
		The result now follows from the definition in \eqref{eq4.18}.
	\end{proof}
	\begin{Thm}\label{T4.13}
 $Mod^{tr}-\mathscr U$ is a  Grothendieck category. 
	\end{Thm}
	\begin{proof}
		It is clear that $Mod^{tr}-\mathscr U$ is a cocomplete abelian category and filtered colimits are exact in  $Mod^{tr}-\mathscr U$. Now, consider an object $\mathscr M$ in  $Mod^{tr}-\mathscr U$. Then, by Proposition \ref{P4.11}, for any element $\zeta\in el_{G}(\mathscr M),$ there exists a subobject $\mathscr P\subseteq \mathscr M$ in $Mod^{tr}-\mathscr U$ such that $\zeta\in el_{G}(\mathscr P)$. Further, from Lemma \ref{L4.12}, we know that $|el_{G}(\mathscr P)|\leq \gamma^{\kappa(G)}.$ As in the proof of 
		Theorem \ref{Th4.7b}, we now see that  $\{\mathscr P'\in Mod^{tr}-\mathscr U~|~ |el_{G}(\mathscr P')|\leq \gamma^{\kappa(G)}\}$ is a set of generators for $Mod^{tr}-\mathscr U$.
	\end{proof}

	\section{Projective generators for modules over a monad quiver}
	In this section, we assume that the quiver $\mathbb Q=(\mathbb V,\mathbb E)$ is a partially ordered set. We will give conditions for $Mod^{cs}-\mathscr U$ and $Mod^{tr}-\mathscr U$ to have projective generators. 
	
	\subsection{Projective generators for cis-modules}
	We suppose that the functor $\mathscr U:\mathbb Q\longrightarrow Mnd(\mathcal C)$ takes values in monads that are exact and preserve colimits.  We begin by constructing a pair of adjoint functors $ex_x^{cs}: EM_{\mathscr U_x} \longrightarrow Mod^{cs}-\mathscr U$ and $ev_{x}^{cs}:Mod^{cs}-\mathscr U\longrightarrow EM_{\mathscr U_x}$ for each $x\in\mathbb V.$ 
	\begin{thm}\label{P5.1}
		Let $x\in\mathbb V$. Then,
		
		\smallskip
		(1) There is a functor $ex_x^{cs}: EM_{\mathscr U_x} \longrightarrow Mod^{cs}-\mathscr U$ defined by setting for each $M\in EM_{\mathscr U_x}$ and $y\in \mathbb V$:
		\begin{equation}
			ex_{x}^{cs}(M)_{y}=\left\{\begin{array}{ll} \psi^* M & \mbox{if $\psi\in\mathbb Q(x,y)$} \\
				0 & \mbox{if $\mathbb Q(x,y) = \emptyset$}\\
			\end{array}\right.
		\end{equation}
		
		\smallskip
		(2) The evaluation $ev_{x}^{cs}:Mod^{cs}-\mathscr U\longrightarrow EM_{\mathscr U_x}$ that takes $\mathscr M\longrightarrow \mathscr M_x$ is an exact functor.
		
		\smallskip
		(3) $(ex_{x}^{cs},ev_{x}^{cs})$ is a pair of adjoint functors.
	\end{thm}
	\begin{proof}
		
		\smallskip
		(1) Clearly, $ex_{x}^{cs}(M)_{y} \in EM_{\mathscr U_{y}}$. Let $\phi:y\longrightarrow y'$ be an edge in $\mathbb Q$. If $x\nleq y$, then $0=ex_{x}^{cs}(M)^\phi:0=\phi^*ex_{x}^{cs}(M)_{y}\longrightarrow ex_{x}^{cs}(M)_{y'}$ in $EM_{\mathscr U_{y'}}$. Otherwise, if there is $\psi:x\longrightarrow y$ and $\rho:x\longrightarrow y'$, then since $\phi\circ\psi=\rho$, we have
		\begin{equation}
			id=ex_{x}^{cs}(M)^\phi:\phi^*ex_{x}^{cs}(M)_{y}=\phi^*\psi^*M\longrightarrow \rho^*M=ex_{x}^{cs}(M)_{y'}
		\end{equation}
		in $EM_{\mathscr U_{y'}}$. Therefore, for each pair of composable morphisms $\phi,\varphi$ in $\mathbb Q$, we have $ex_{x}^{cs}(M)^{\varphi\phi} = ex_{x}^{cs}(M)^{\varphi}\circ\varphi^{*}(ex_{x}^{cs}(M)^{\phi}).$
		
		\smallskip
		(2) Clearly, $ev_{x}^{cs}$ is a functor. Further, since finite limits and finite colimits in $Mod^{cs}-\mathscr U$ are computed pointwise, $ev_{x}^{cs}$ is exact. 
		
		\smallskip
		(3) Given $M\in EM_{\mathscr U_{x}}$ and $\mathscr{P}\in Mod^{cs}-\mathscr U$, we will show that $Mod^{cs}-\mathscr U(ex_{x}^{cs}(M),\mathscr P)\cong EM_{\mathscr U_x}(M,ev_{x}^{cs}(\mathscr P)).$ We start with a morphism $f:M\longrightarrow \mathscr P_{x}$ in $EM_{\mathscr U_x}.$ Then we  define $\xi^f:ex_{x}^{cs}(M)\longrightarrow \mathscr P$ by setting for each $y\in\mathbb Q$:
		\begin{equation}
			\xi_{y}^f:ex_{x}^{cs}(M)_y=\psi^*M\xrightarrow{\psi^*f}\psi^*\mathscr P_{x}\xrightarrow{\mathscr P^{\psi}} \mathscr P_y
		\end{equation}
		whenever $x\leq y$ and $\psi\in\mathbb Q(x,y)$ and $\xi_{y}^f=0$ otherwise. Now for an edge $\phi:y\longrightarrow y'$ in $\mathbb Q$, we will show that $\mathscr P^\phi \circ \phi^*\xi_{y}^f=\xi_{y'}^f\circ ex_{x}^{cs}(M)^\phi$. If $x\nleq y$, then $ex_{x}^{cs}(M)_y=0$ and the equality holds. Otherwise, consider $\psi\in\mathbb Q(x,y)$ and $\rho=\phi\circ\psi:x\longrightarrow y'\in \mathbb Q(x,y')$. Then, we have the following commutative diagram
		\begin{equation}
			\begin{CD}
				\phi^*ex_{x}^{cs}(M)_y=\phi^*\psi^*M @>\phi^*(\mathscr P^\psi\circ\psi^*f) >> \phi^{*}\mathscr{P}_y \\
				@V id VV @VV\mathscr P^{\phi}V \\
				\phi^*\psi^*M=\rho^*M@>\mathscr P^{\rho}\circ\rho^*(f)=\mathscr P^{\phi\psi}\circ\phi^*\psi^*f>> \mathscr{P}_{y'} \\
			\end{CD}
		\end{equation} 
		which shows that $\xi^f$ is a morphism in $Mod^{cs}-\mathscr U$. Conversely, if $\xi:ex_{x}^{cs}(M)\longrightarrow \mathscr P$ is a morphism in $Mod^{cs}-\mathscr U$, then we have an induced morphism $f^\xi:M\longrightarrow\mathscr P_x$ in $EM_{\mathscr U_x}$. It may be verified directly that these two associations are inverse to each other.
	\end{proof}
	\noindent
	We also record here the fact that the functor $ev_{x}^{cs}:Mod^{cs}-\mathscr U\longrightarrow EM_{\mathscr U_x}$ has a right adjoint.
	\begin{thm}\label{P5.2}
		Let $x\in\mathbb V$. Then the functor $ev_{x}^{cs}:Mod^{cs}-\mathscr U\longrightarrow EM_{\mathscr U_x}$ has a right adjoint $coe_x: EM_{\mathscr U_x} \longrightarrow Mod^{cs}-\mathscr U$ given as follows for $M\in  EM_{\mathscr U_x}$ and $y\in\mathbb V$:
		\begin{equation}
			coe_{x}(M)_{y}=\left\{\begin{array}{ll} \psi_* M & \mbox{if $\psi\in\mathbb Q(y,x)$} \\
				0 & \mbox{if~$\mathbb Q(y,x) = \emptyset$}\\
			\end{array}\right.
		\end{equation}
	\end{thm}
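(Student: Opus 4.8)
The plan is to run, in dual form, the argument for $ex_x$ in Proposition \ref{P5.1}, replacing the extension functors $\phi^*$ by the restriction functors $\phi_*$ of \eqref{forg4}. First I would specify the $\mathscr U$-module structure on $coe_x(M)$. In the poset $\mathbb Q$, an edge $\phi:y\longrightarrow y'$ together with some $\rho\in\mathbb Q(y',x)$ forces $y\le x$, say $\psi\in\mathbb Q(y,x)$, and then $\psi=\rho\phi$; moreover, since $\mathscr U$ is a functor and $\phi\mapsto\phi_*$ is given by the strict formula in \eqref{forg4}, one has the strict equality $\phi_*\rho_*=\psi_*$ of functors $EM_{\mathscr U_x}\longrightarrow EM_{\mathscr U_y}$. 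Accordingly I define, for an edge $\phi:y\longrightarrow y'$, the transition map $coe_x(M)_\phi:coe_x(M)_y\longrightarrow \phi_*coe_x(M)_{y'}$ to be the identity of $\psi_*M$ under the equality $\psi_*M=\phi_*\rho_*M$ when $\mathbb Q(y',x)\neq\emptyset$, and the (necessarily unique) zero map otherwise, noting that $\mathbb Q(y',x)=\emptyset$ gives $coe_x(M)_{y'}=0$. The functoriality conditions of Definition \ref{D4.2} for $coe_x(M)$ then follow at once from $\phi_*\rho_*=(\rho\phi)_*$, and $coe_x$ is plainly functorial in $M$. Since $id_x\in\mathbb Q(x,x)$ and $(id_x)_*=id_{EM_{\mathscr U_x}}$, we have $coe_x(M)_x=M$, so $ev_x\circ coe_x=id_{EM_{\mathscr U_x}}$.

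Next I would produce the adjunction bijection $Mod-\mathscr U(\mathscr M,coe_x(M))\cong EM_{\mathscr U_x}(\mathscr M_x,M)$, natural in $\mathscr M\in Mod-\mathscr U$ and $M\in EM_{\mathscr U_x}$. Given $g:\mathscr M_x\longrightarrow M$ in $EM_{\mathscr U_x}$, define $\xi^g:\mathscr M\longrightarrow coe_x(M)$ by $\xi^g_y:=\psi_*(g)\circ\mathscr M_\psi:\mathscr M_y\longrightarrow\psi_*\mathscr M_x\longrightarrow\psi_*M=coe_x(M)_y$ when $\psi\in\mathbb Q(y,x)$, and $\xi^g_y:=0$ when $\mathbb Q(y,x)=\emptyset$. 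To see $\xi^g$ is a morphism in $Mod-\mathscr U$, i.e.\ $coe_x(M)_\phi\circ\xi^g_y=\phi_*(\xi^g_{y'})\circ\mathscr M_\phi$ for each edge $\phi:y\longrightarrow y'$, the only nontrivial case is $\mathbb Q(y',x)\neq\emptyset$, say $\rho\in\mathbb Q(y',x)$; there one computes $\phi_*(\xi^g_{y'})\circ\mathscr M_\phi=\phi_*(\rho_*(g))\circ(\phi_*(\mathscr M_\rho)\circ\mathscr M_\phi)=\phi_*(\rho_*(g))\circ\mathscr M_{\rho\phi}=\psi_*(g)\circ\mathscr M_\psi$, using the cocycle identity $\phi_*(\mathscr M_\rho)\circ\mathscr M_\phi=\mathscr M_{\rho\phi}$ for $\mathscr M$ together with $\phi_*\rho_*=\psi_*$ and $\rho\phi=\psi$, and this equals $coe_x(M)_\phi\circ\xi^g_y$ since $coe_x(M)_\phi$ is the identity. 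Conversely, send $\xi:\mathscr M\longrightarrow coe_x(M)$ to $\xi_x:\mathscr M_x\longrightarrow coe_x(M)_x=M$.

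Finally I would check these assignments are mutually inverse and natural. Starting from $g$, one gets $\xi^g_x=(id_x)_*(g)\circ\mathscr M_{id_x}=g$. Starting from $\xi$, the morphism condition for $\xi$ along the edge $\psi:y\longrightarrow x$ reads $coe_x(M)_\psi\circ\xi_y=\psi_*(\xi_x)\circ\mathscr M_\psi$; as $coe_x(M)_\psi$ is the identity of $\psi_*M$, this gives $\xi_y=\psi_*(\xi_x)\circ\mathscr M_\psi=\xi^{\xi_x}_y$ for $\psi\in\mathbb Q(y,x)$, while both sides vanish when $\mathbb Q(y,x)=\emptyset$. Naturality in $\mathscr M$ and $M$ is a routine diagram chase from the naturality of the transformations $\psi_*$. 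Hence $(ev_x,coe_x)$ is an adjoint pair, with unit $\mathscr M\longrightarrow coe_x(\mathscr M_x)$ given pointwise by the transition maps $\mathscr M_\psi$ (and $0$ where no such $\psi$ exists) and counit the identity. The one place that genuinely needs care is the verification that $\xi^g$ respects transitions, where the contravariance of $\phi\mapsto\phi_*$ must be matched against the cocycle condition on $\mathscr M$; everything else is bookkeeping dual to Proposition \ref{P5.1}.
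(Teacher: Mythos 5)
Your proof is correct and follows essentially the same route as the paper: you define the transition maps of $coe_x(M)$ via the strict identification $\phi_*\rho_*=(\rho\phi)_*$ (zero where no arrow to $x$ exists) and then verify the adjunction bijection by the argument dual to Proposition \ref{P5.1}(3), which is exactly what the paper does, only more tersely. Your explicit check that $\xi^g$ respects transitions and the identification of the unit and counit are correct elaborations of the step the paper leaves to the reader.
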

	\begin{proof}
		It is clear that $coe_{x}(M)_{y}\in EM_{\mathscr U_y}$ for each $y\in\mathbb V$. Now, consider an edge $\phi:y'\longrightarrow y$. If $y\nleq x$ then $coe_x(M)_\phi=0$. Otherwise, if we have edges $\psi:y\longrightarrow x$ and $\rho:y'\longrightarrow x$, then, since $\psi\circ\phi=\rho$, we get $id=coe_{x}(M)_\phi:\rho_*(M)\longrightarrow\phi_*\psi_*(M)$. It follows that $coe_{x}(M)\in Mod^{cs}-\mathscr U$. The adjunction $(ev_x,coe_x)$ can now be shown as in the proof of Proposition \ref{P5.1}(3). 
	\end{proof}
	\begin{cor}\label{C5.3}
		Let $x\in\mathbb V$. Then the functor $ex_{x}^{cs}: EM_{\mathscr U_x} \longrightarrow Mod^{cs}-\mathscr U$ preserves projectives.
	\end{cor}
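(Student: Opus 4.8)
Corollary \ref{C5.3} states that $ex_x$ preserves projectives. The plan is to deduce this formally from the adjunction established in Proposition \ref{P5.1}, using the standard fact that a left adjoint preserves projectives whenever its right adjoint is exact.

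First I would recall that by Proposition \ref{P5.1}(3), the pair $(ex_x, ev_x)$ is an adjunction, so $ex_x$ is left adjoint to $ev_x$. Next, by Proposition \ref{P5.1}(2), the right adjoint $ev_x : Mod-\mathscr U \longrightarrow EM_{\mathscr U_x}$ is exact. Now let $P \in EM_{\mathscr U_x}$ be projective; I want to show $ex_x(P)$ is projective in $Mod-\mathscr U$. Given an epimorphism $\pi : \mathscr M \twoheadrightarrow \mathscr M''$ in $Mod-\mathscr U$ and a morphism $g : ex_x(P) \longrightarrow \mathscr M''$, I pass to the adjoint transpose $\tilde g : P \longrightarrow ev_x(\mathscr M'')$ in $EM_{\mathscr U_x}$. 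Since $ev_x$ is exact, $ev_x(\pi) : ev_x(\mathscr M) \twoheadrightarrow ev_x(\mathscr M'')$ is an epimorphism in $EM_{\mathscr U_x}$; by projectivity of $P$ there is a lift $\tilde h : P \longrightarrow ev_x(\mathscr M)$ with $ev_x(\pi) \circ \tilde h = \tilde g$. Taking adjoint transposes back, and using the naturality of the adjunction isomorphism $Mod-\mathscr U(ex_x(-), -) \cong EM_{\mathscr U_x}(-, ev_x(-))$ with respect to post-composition by $\pi$, I obtain $h : ex_x(P) \longrightarrow \mathscr M$ with $\pi \circ h = g$. This shows $ex_x(P)$ is projective.

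There is no genuine obstacle here; the only point requiring care is the compatibility of the adjunction bijection with the morphism $\pi$, i.e. that the square relating $Mod-\mathscr U(ex_x(P), \mathscr M) \to Mod-\mathscr U(ex_x(P), \mathscr M'')$ and $EM_{\mathscr U_x}(P, ev_x(\mathscr M)) \to EM_{\mathscr U_x}(P, ev_x(\mathscr M''))$ commutes, which is precisely the naturality of the adjunction isomorphism in the second variable. Since the bijection in Proposition \ref{P5.1}(3) was constructed explicitly via $f \mapsto \xi^f$ and is readily seen to be natural, this is immediate. I would therefore present the proof as a short two-line argument: cite Proposition \ref{P5.1}(2) for exactness of $ev_x$, cite Proposition \ref{P5.1}(3) for the adjunction, and invoke the standard lemma that a left adjoint with exact right adjoint preserves projective objects.
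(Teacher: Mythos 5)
Your proposal is correct and follows exactly the paper's argument: Proposition \ref{P5.1} gives the adjunction $(ex_x, ev_x)$ with exact right adjoint $ev_x$, and the standard fact that a left adjoint with exact right adjoint preserves projectives finishes the proof. Your spelled-out verification of that standard fact via adjoint transposes and naturality is a correct elaboration of what the paper leaves implicit.
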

	\begin{proof}
		By Proposition \ref{P5.1}, we know that   $(ex_{x}^{cs},ev_{x}^{cs})$ is an adjoint pair   and that the right adjoint functor $ev_x$ is exact. It therefore follows that the left adjoint $ex_{x}^{cs}$ preserves projective objects.
	\end{proof}
	\begin{Thm}\label{Th5.4}
		The category $Mod^{cs}-\mathscr U$ has a set of projective generators.
	\end{Thm}
	\begin{proof}
		By the proof of Proposition \ref{L3.1}, we know that for any  $x\in\mathbb V$, $\mathscr U_xG$ is a projective generator in $EM_{\mathscr U_{x}}$. Using Corollary \ref{C5.3}, it now follows that each $ex_{x}^{cs}(\mathscr U_xG)$ is projective in $Mod^{cs}-\mathscr U$. We will now show that the family
		\begin{equation}
			\mathcal{G}=\{ex_{x}^{cs}(\mathscr U_xG)~\vert~x\in\mathbb{V}\}
		\end{equation}
		is a set of generators for $Mod^{cs}-\mathscr U$. We start with a monomorphism $\iota:\mathscr N\hookrightarrow \mathscr M$ in $Mod^{cs}-\mathscr U$ such that $\mathscr N\subsetneq \mathscr M$. We know that kernels and cokernels in $Mod^{cs}-\mathscr U$ are computed pointwise. Hence, there exists some $x\in\mathbb V$ such that $\iota_x:\mathscr N_x\hookrightarrow\mathscr M_x$ is a monomorphism with $\mathscr N_x\subsetneq\mathscr M_x$. Since $\mathscr U_xG$  is a generator of $EM_{\mathscr U_x}$, we may choose a morphism $f:\mathscr U_xG\longrightarrow \mathscr M_x$ in $EM_{\mathscr U_x}$ which does not factor through $\iota_x:\mathscr N_x\hookrightarrow\mathscr M_x$. Since
		$(ex_{x}^{cs},ev_{x}^{cs})$ is an adjoint pair, we obtain a morphism $\xi^f:ex_{x}^{cs}(\mathscr U_xG)\longrightarrow \mathscr M$ such that $\xi^f$ does not factor through $\iota:\mathscr N\longrightarrow\mathscr M$. It now follows from \cite[\S1.9]{Gro} that $\mathcal G$ is a set of generators for $Mod^{cs}-\mathscr U$. 
	\end{proof}
	\subsection{Projective generators for trans-modules}

We continue with the functor $\mathscr U:\mathbb Q\longrightarrow Mnd(\mathcal C)$ taking values in monads that are exact and preserve colimits. We suppose additionally that for each $x\in\mathbb V$, the right adjoint $\mathscr V_x$ of $\mathscr U_x$ is exact.
	\begin{thm}\label{P5.5}
		Let $x\in\mathbb V$. Then,
		
		\smallskip
		(1) There exists a functor $ex_x^{tr}: EM_{\mathscr U_x} \longrightarrow Mod^{tr}-\mathscr U$ defined by setting for each $M\in EM_{\mathscr U_x}$ and $y\in \mathbb V:$
		\begin{equation}
			ex_{x}^{tr}(M)_{y}=\left\{\begin{array}{ll} \psi_{*} M & \mbox{if $\psi\in\mathbb Q(y,x)$} \\
				0 & \mbox{$otherwise$}\\
			\end{array}\right.
		\end{equation}
		\smallskip
		(2) The evaluation $ev_{x}^{tr}:Mod^{tr}-\mathscr U\longrightarrow EM_{\mathscr U_x}$, $\mathscr M\longrightarrow \mathscr M_x$ is an exact functor.
		
		\smallskip
		(3) $(ex_{x}^{tr},ev_{x}^{tr})$ is a pair of adjoint functors.
	\end{thm}
	\begin{proof}
	This is proved in a manner similar to Proposition \ref{P5.1}.

	\end{proof}
	\begin{thm}\label{P5.6w}
		Let $x\in\mathbb V$. Then the functor $ev_{x}^{tr}:Mod^{tr}-\mathscr U\longrightarrow EM_{\mathscr U_x}$ has a right adjoint $coe_x^{tr}: EM_{\mathscr U_x} \longrightarrow Mod^{tr}-\mathscr U$ given by:
		\begin{equation}
			coe_{x}^{tr}(M)_{y}=\left\{\begin{array}{ll} \hat{\psi} M & \mbox{if $\psi\in \mathbb Q(x,y)$} \\
				0 & \mbox{$otherwise$}\\
			\end{array}\right.
		\end{equation}
		for all $M\in  EM_{\mathscr U_x}$ and $y\in\mathbb V$. Additionally, the functor $ex_{x}^{tr}: EM_{\mathscr U_x} \longrightarrow Mod^{tr}-\mathscr U$ preserves projectives.
	\end{thm}
	\begin{proof} The proof of this is similar to that of Proposition \ref{P5.2} and Corollary \ref{C5.3}. 
	\end{proof}
	\begin{Thm}\label{Th5.8}
		The category $Mod^{tr}-\mathscr U$ has a set of projective generators.
	\end{Thm}
	\begin{proof}
		For each  $x\in\mathbb V$, we know that $\mathscr U_xG$ is a projective generator in $EM_{\mathscr U_{x}}$. Using Proposition \ref{P5.6w}, it now follows that each $ex_{x}^{tr}(\mathscr U_xG)$ is projective in $Mod^{tr}-\mathscr U$. As in the proof of Theorem \ref{Th5.4}, we will show that the family
		\begin{equation}
			\mathcal{G}=\{ex_{x}^{tr}(\mathscr U_xG)~\vert~x\in\mathbb{V}\}
		\end{equation}
		is a set of generators for $Mod^{tr}-\mathscr U$. We consider a monomorphism $\iota:\mathscr N\hookrightarrow \mathscr M$ in $Mod^{tr}-\mathscr U$ such that $\mathscr N\subsetneq \mathscr M$. Then, there exists some $x\in\mathbb V$ such that $\iota_x:\mathscr N_x\hookrightarrow\mathscr M_x$ is a monomorphism with $\mathscr N_x\subsetneq\mathscr M_x$. As $\mathscr U_xG$ is a generator for $EM_{\mathscr U_x}$, there exists a morphism $f:\mathscr U_xG\longrightarrow \mathscr M_x$ in $EM_{\mathscr U_x}$ that does not factor through $\iota_x:\mathscr N_x\hookrightarrow\mathscr M_x$. Since
		$(ex_{x}^{tr},ev_{x}^{tr})$ is an adjoint pair, there exists a morphism $\xi^f:ex_{x}^{tr}(\mathscr U_xG)\longrightarrow \mathscr M$ such that $\xi^f$ does not factor through $\iota:\mathscr N\longrightarrow\mathscr M$. The result now follows from \cite[\S1.9]{Gro}. 
	\end{proof}
	\section{Cartesian modules over a monad quiver}

We continue with $\mathbb Q$ being a poset and the functor $\mathscr U:\mathbb Q\longrightarrow Mnd(\mathcal C)$ taking values in monads that are exact and preserve colimits.  
	\subsection{Cartesian cis-modules over a monad quiver}
	
	Suppose additionally that $\mathscr U:\mathbb Q\longrightarrow Mnd(\mathcal C)$ is flat, i.e., for any edge $\psi:x\longrightarrow y$ in $\mathbb Q$, the functor  $\psi^*:EM_{\mathscr U_{x}}\longrightarrow EM_{\mathscr U_{y}}$  is exact.  Let $\xi : \mathscr M \longrightarrow \mathscr M'$ be a morphism in $Mod^{cs}_c-\mathscr U$. It follows that $Ker(\xi)$, $Coker(\xi)\in Mod^{cs}_c-\mathscr U$, where  $Ker(\xi)_x=Ker(\xi_x)$ and
	$Coker(\xi)_x=Coker(\xi_x)$ for each $x \in \mathbb V$.  We see therefore that $Mod^{cs}_c-\mathscr U$ is an abelian category. 
	
	\smallskip
	We continue with $\gamma\geq sup\{\mbox{$Mor(\mathbb Q)$, $\kappa(G)$, $||\mathscr U_yG||^G$, $y\in Ob(\mathbb Q)$} \}$ as in \eqref{crd4rf}. For an endofunctor $U:\mathcal C\longrightarrow \mathcal C$ as in Theorem \ref{T2.2}, we recall that we have $\lambda^U$ such that $||UM||^G\leq \lambda^U\times  (||M||^G)^{\kappa(G)}$ for any object $M\in \mathcal C$. In this section, we only consider monads which are exact and preserve colimits.
	
	\begin{lem}\label{L6.1}
		Let $\phi:(U,\theta,\eta)\longrightarrow (U',\theta',\eta')$  be a flat morphism of monads over $\mathcal C$. Let $\gamma'\geq \{\mbox{$Mor(\mathbb Q)$, $\kappa(G)$, $||UG||^G$, $||U'G||^G$} \}$ and  $\alpha\geq \gamma',\lambda^U$. Let $(M,f_M)\in EM_U$ and let $X\subseteq el_G(\phi^*M)$ be a subset such that $|X|\leq \alpha$.  Then, there exists a subobject $N\subseteq M$ in $EM_U$ such that
		$||N||^G\leq  \alpha^{\gamma'}$ and $X\subseteq \phi^*N$. 
	\end{lem}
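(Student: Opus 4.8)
The plan is to descend from the quotient $\phi^*M$ of the free $U'$-module $U'M$ to a small subobject of $M$ \emph{inside $\mathcal C$}, and to pass to the $U$-submodule it generates only at the very end. Since $U'$ is exact and preserves colimits, the coequalizer \eqref{4.3qi} defining $\phi^*M$ is computed in $\mathcal C$ by Proposition \ref{L3.1}, so the canonical map $q_M\colon U'M\twoheadrightarrow\phi^*M$ is an epimorphism in $\mathcal C$. As $G$ is projective, I would first lift each $x\in X\subseteq\mathcal C(G,\phi^*M)$ to a morphism $\tilde x\colon G\to U'M$ with $q_M\circ\tilde x=x$, obtaining a set $\tilde X:=\{\tilde x\mid x\in X\}$ with $|\tilde X|\leq|X|\leq\alpha$.

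Next I would realise $M$ as a $\kappa(G)$-directed colimit of small subobjects. Let $I$ be the poset of subobjects $P\subseteq M$ in $\mathcal C$ with $||P||^G\leq\gamma^{\kappa(G)}$. Using Lemma \ref{L2.1} together with $\gamma\geq\kappa(G)$ one checks that $I$ is closed under sums of fewer than $\kappa(G)$ of its members, hence is $\kappa(G)$-directed; and since $G$ is a generator and every $Im(x\colon G\to M)$ is a quotient of $G$ with $||G||^G\leq\gamma$, each such image lies in $I$ and $M=\varinjlim_{P\in I}P$. Because $U'$ preserves colimits, $U'M=\varinjlim_{P\in I}U'P$ is again a $\kappa(G)$-directed colimit, so, $G$ being $\kappa(G)$-presentable, each $\tilde x$ factors through some $U'P_x\to U'M$ with $P_x\in I$. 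Setting $P:=\sum_{x\in X}P_x\subseteq M$, Lemma \ref{L2.1} (supremum bounded by $\gamma^{\kappa(G)}$, index bound $\max\{|X|,\kappa(G)\}\leq\alpha$) gives $||P||^G\leq\alpha^{\kappa(G)}$.

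Finally, let $N:=Im\bigl(\hat\iota_P\colon UP\to M\bigr)$ be the $U$-submodule of $M$ generated by $P$, where $\hat\iota_P$ is the morphism of $EM_U$ corresponding under the adjunction \eqref{monadj} to the inclusion $\iota_P\colon P\hookrightarrow M$; this is a subobject of $M$ in $EM_U$ by Proposition \ref{L3.1}, the unit axiom $f_M\circ\eta_M=1_M$ of \eqref{3.1cq} forces $\iota_P$ to factor through it, so $P\subseteq N$ in $\mathcal C$, and since $N$ is a quotient of $UP$, Theorem \ref{T2.2} together with $\lambda^U\leq\alpha$ and $\kappa(G)\leq\gamma$ yields $||N||^G\leq||UP||^G\leq\lambda^U\times(||P||^G)^{\kappa(G)}\leq\alpha^{\kappa(G)}\leq\alpha^{\gamma}$. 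For the containment $X\subseteq\phi^*N$: each $\tilde x$ factors through $U'P_x\to U'M$, hence, since $P_x\subseteq P\subseteq N$ in $\mathcal C$, through $U'(\iota_N)\colon U'N\to U'M$ with $\iota_N$ now a morphism \emph{in $EM_U$}; naturality of the quotient maps ($q_M\circ U'(\iota_N)=\phi^*(\iota_N)\circ q_N$) then exhibits $x=q_M\circ\tilde x$ as factoring through $\phi^*(\iota_N)\colon\phi^*N\to\phi^*M$. Flatness of $\phi$ makes $\phi^*(\iota_N)$ a monomorphism, whence $X\subseteq el_G(\phi^*N)\subseteq el_G(\phi^*M)$, i.e.\ $X\subseteq\phi^*N$.

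The conceptual crux is the second step: $x$ lives in the quotient $\phi^*M$ of $U'M$ and a priori has nothing to do with subobjects of $M$ in $EM_U$, so one must pass to $U'M$ and exploit that $U'$ commutes with the $\kappa(G)$-directed union of $||\,\cdot\,||^G$-bounded subobjects of $M$ \emph{in $\mathcal C$}, replacing the chosen $P$ by the submodule it generates only afterwards. One should resist applying Theorem \ref{T3.5} directly to $\phi^*M$, which would produce a small subobject of $\phi^*M$ in $EM_{U'}$ and a bound involving $\lambda^{U'}$ rather than the claimed $\alpha^\gamma$. Everything else is routine cardinal arithmetic via Lemma \ref{L2.1} and Theorem \ref{T2.2}, using $\alpha\geq\gamma\geq\kappa(G)$, $\alpha\geq\lambda^U$, and (harmlessly, when fixing $\gamma$) $\gamma\geq||G||^G$.
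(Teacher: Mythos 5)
Your argument is correct, but it follows a genuinely different route from the paper's. The paper stays inside the Eilenberg--Moore categories: it presents $M$ as a quotient of a free module $(UG)^{(I)}$ in $EM_U$, applies $\phi^*$ to get an epimorphism $(U'G)^{(I)}\twoheadrightarrow\phi^*M$, lifts the transpose $\hat x\colon U'G\to\phi^*M$ through it using projectivity of $U'G$ in $EM_{U'}$, cuts down to $(U'G)^{(J_x)}$ with $|J_x|<\gamma$ using $\gamma$-presentability of $U'G$ (Lemma \ref{L3.2}), and takes $N$ to be the image of $\bigoplus_x(UG)^{(J_x)}\to M$; exactness of $\phi^*$ then identifies $\phi^*N$ with the corresponding image in $\phi^*M$. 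You instead descend to $\mathcal C$: you use the epimorphism $U'M\twoheadrightarrow\phi^*M$ coming from the coequalizer \eqref{4.3qi} (legitimately computed in $\mathcal C$ by Proposition \ref{L3.1}), projectivity and $\kappa(G)$-presentability of $G$ itself, and the $\kappa(G)$-directed union of $||\cdot||^G$-bounded subobjects of $M$, only passing to $EM_U$ at the end via the free-module adjunction and the naturality $q_M\circ U'(\iota_N)=\phi^*(\iota_N)\circ q_N$. Both routes end with the same cardinal arithmetic and the same use of flatness to see $\phi^*N\subseteq\phi^*M$. What the paper's version buys is that it never needs any bound on $||G||^G$: your decomposition $M=\varinjlim_{P\in I}P$ requires $||G||^G\leq\gamma^{\kappa(G)}$ (so that images of elements $G\to M$ lie in $I$), which does not follow from \eqref{crd4rf}; you flag this, and it is indeed harmless since $\gamma$ is only constrained from below and every downstream statement tolerates enlarging it, but it is an extra hypothesis the paper's proof avoids. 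Conversely, your version buys independence from the identification $\phi^*(UG)=U'G$ of \eqref{4.4dk} and from projectivity of $U'G$ in $EM_{U'}$, relying only on properties of $G$ in $\mathcal C$.
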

	
	\begin{proof}
		We choose $x\in X\subseteq \mathcal C(G,\phi^*M)$ and consider the corresponding morphism $\hat{x}\in EM_{U'}(U'G,\phi^*M)$. Since $(UG,\theta G)$ is a generator for $EM_U$, we can choose an epimorphism $p:(UG)^{(I)}\longrightarrow M$ in $EM_U$ from a direct sum of copies of $UG$. As noted in \eqref{4.4dk}, we know that $\phi^*(UG)=U'G$.  Since $\phi^*$ is a left adjoint, we have an induced epimorphism $\phi^*(p):(U'G)^{(I)}=\phi^*((UG)^{(I)})\longrightarrow
		\phi^*M$. 
		
		\smallskip
		Since $U'G$ is projective in $EM_{U'}$, we  may now lift $\hat{x}:U'G\longrightarrow \phi^*M$ over $\phi^*(p)$ to obtain $\zeta_x: U'G\longrightarrow (U'G)^{(I)}=\phi^*((UG)^{(I)})$ such that $\hat x=\phi^*(p)\circ \zeta_x$. Since $\gamma'\geq\kappa(G)$, we know by  Lemma \ref{L3.2} that $U'G$ is $\gamma'$-presentable in $EM_{U'}$. Accordingly, we may find a subset $J_x\subseteq I$ with $|J_x|<\gamma'$ such that $\zeta_x$ factors through the direct sum $(U'G)^{(J_x)}$. We now have a diagram in $EM_{U'}$.
		\begin{equation}\label{61cd}
			\begin{tikzcd}
				U'G \arrow{ddrr}{\zeta_x} \arrow{rr}{} \arrow{dd}{\hat{x}} & & (U'G)^{(J_x)}=\phi^*((UG)^{(J_x)})\arrow{dd}{} \\
				& &\\
				\phi^*M  & & \arrow{ll}{\phi^*(p)} (U'G)^{(I)}=\phi^*((UG)^{(I)}) \\
			\end{tikzcd}
		\end{equation} From \eqref{61cd}, we have a morphism $\xi_x: (UG)^{(J_x)}\longrightarrow (UG)^{(I)}\longrightarrow M$ such that $\hat{x}$ factors through $\phi^*(\xi_x)$. In $EM_{U}$, we now set
		\begin{equation}\label{62sq}
			N:=Im\left(\xi:= \underset{x\in X}{\bigoplus} \xi_x:\underset{x\in X}{\bigoplus} (UG)^{(J_x)}\longrightarrow M \right)\subseteq M
		\end{equation} By assumption, $\phi^*:EM_{U}\longrightarrow EM_{U'}$ is exact. Additionally, since $\phi^*$ is a left adjoint, we have
		\begin{equation}\label{63sq}
			\phi^*N:=Im\left(\phi^*(\xi)=\underset{x\in X}{\bigoplus} \phi^*\xi_x:\underset{x\in X}{\bigoplus} \phi^*((UG)^{(J_x)})\longrightarrow \phi^*M \right)
		\end{equation} By \eqref{61cd}, we see that each $x\in X$ lies in the image $\phi^*N$. 
		It remains to show that $||N||^G\leq \alpha^{\gamma'}$. By definition, $||UG||^G\leq \lambda^U={(||UG||^G)}^{\kappa(G)}\times \kappa(G)^{\kappa(G)}$. Applying Lemma \ref{L2.1}, we now obtain
		\begin{equation}
			||N||^G \leq || \underset{x\in X}{\bigoplus} (UG)^{(J_x)}||^G \leq (\lambda^U)^{\kappa(G)}\times (\alpha\times \gamma')^{\kappa(G)}\leq (\lambda^U)^{\gamma'}\times {\gamma'}^{\gamma'}\times \alpha^{\gamma'}= \alpha^{\gamma'}
		\end{equation} where the last equality follows from the fact that $\alpha\geq \gamma,\lambda^U$. 
	\end{proof}
	
	\begin{lem}\label{L6.2}
		Let $\phi:(U,\theta,\eta)\longrightarrow (U',\theta',\eta')$  be a flat morphism of monads over $\mathcal C$ and let $(M,f_M)\in EM_U$. Let $\gamma'\geq \{\mbox{$Mor(\mathbb Q)$, $\kappa(G)$, $||UG||^G$, $||U'G||^G$} \}$ and  $\alpha\geq \gamma',\lambda^U,\lambda^{U'}$. Let $X\subseteq el_G(M)$ and $Y\subseteq el_G(\phi^*M)$ be subsets such that $|X|, |Y|\leq \alpha^{\gamma'}$.  Then, there exists a subobject $N\subseteq M$ in $EM_U$ such that
		
		\smallskip
		(1) $X\subseteq el_G(N)$ and $Y\subseteq el_G(\phi^*N)$. 
		
		\smallskip
		(2)  $||N||^G\leq  \alpha^{\gamma'}$ and $||\phi^*N||^G\leq \alpha^{\gamma'}$. 
	\end{lem}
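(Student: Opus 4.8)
The plan is to combine Theorem \ref{T3.5} (which handles enlargement with respect to $el_G$ on $EM_U$) with Lemma \ref{L6.1} (which handles enlargement with respect to $el_G(\phi^*(-))$) in an alternating back-and-forth construction, and then take a union over the resulting ascending chain. First I would observe that a single application of Theorem \ref{T3.5} to $X \subseteq el_G(M)$ produces a subobject $N_0 \subseteq M$ in $EM_U$ with $X \subseteq el_G(N_0)$ and $\|N_0\|^G \leq \lambda^U \times \delta^\delta \times |X|^\delta$; since $|X| \leq \alpha^\gamma$ and $\alpha \geq \gamma, \lambda^U$ (and $\delta \leq \gamma$ after possibly enlarging $\gamma$, or absorbing $\delta$ into the bound), this is $\leq \alpha^\gamma$. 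However, applying $\phi^*$ to $N_0$ need not capture $Y \subseteq el_G(\phi^*M)$, so next I would feed the missing elements of $Y$ (together with the image of $Y$ under the map $el_G(\phi^*M) \to el_G(\phi^*M)$ that is already present) into Lemma \ref{L6.1} to get $N_0 \subseteq N_1 \subseteq M$ with $Y \subseteq el_G(\phi^*N_1)$ and $\|N_1\|^G \leq \alpha^\gamma$.

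The subtlety is that enlarging $N_0$ to $N_1$ may have introduced new elements of $el_G(N_1)$ not in $X$ — but that is harmless since we only need $X \subseteq el_G(N)$, not equality. Conversely, though, when we later enlarge again we must be careful that $\phi^*$ applied to the bigger subobject still sees all of $Y$; this is automatic because $\phi^*$ preserves monomorphisms (flatness) so $el_G(\phi^*N_1) \subseteq el_G(\phi^*N_2)$ for $N_1 \subseteq N_2$. Therefore a single round suffices: I would set $N := N_1$. The point is that both enlargement steps are \emph{monotone} in the subobject, so there is no genuine oscillation — unlike a typical transfinite closure argument, here one pass through Theorem \ref{T3.5} followed by one pass through Lemma \ref{L6.1} already yields an $N$ satisfying both (1) and (2) simultaneously, because the second step only \emph{grows} $N_0$ and growing $N_0$ cannot destroy $X \subseteq el_G(N_0) \subseteq el_G(N_1)$.

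For the cardinality bookkeeping in (2): after the $\phi^*$-step we know $\|N\|^G \leq \alpha^\gamma$ directly from Lemma \ref{L6.1}. For $\|\phi^*N\|^G$ I would apply Theorem \ref{T2.2} to the colimit-preserving endofunctor underlying $U'$ — more precisely, use that $\phi^*N$ is a quotient of $(U'G)^{(I')}$ for an index set $I'$ of size $\leq \|N\|^G$ times a bounded factor (coming from the presentation $G^{(\mathcal C(G,N))} \twoheadrightarrow N$ pushed through $\phi^*$ and the identification $\phi^*(U G) = U'G$ from \eqref{4.4dk}), so that by Lemma \ref{L2.1} and the assumption on the generator $G$ being projective, $\|\phi^*N\|^G \leq (\|U'G\|^G)^{\kappa(G)} \times (\alpha^\gamma)^{\kappa(G)} \times \ldots \leq \alpha^\gamma$, using $\alpha \geq \gamma \geq \kappa(G)$, $\alpha \geq \lambda^{U'} \geq (\|U'G\|^G)^{\kappa(G)}$, and the idempotency $(\alpha^\gamma)^\gamma = \alpha^\gamma$. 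The main obstacle I anticipate is purely notational rather than conceptual: keeping straight which $el_G$ lives over which Eilenberg–Moore category, and verifying that the bound $\alpha^\gamma$ is genuinely a fixed point of all the cardinal operations invoked (products with $\gamma^\gamma$, with $(\lambda^U)^\gamma$, raising to the $\gamma$ or $\kappa(G)$ power), which rests on $\alpha \geq \gamma, \lambda^U, \lambda^{U'}$ and $\gamma$ infinite regular — this is exactly the same style of estimate already carried out in Lemma \ref{L6.1} and Theorem \ref{T3.5}, so it should go through cleanly.
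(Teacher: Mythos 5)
Your proposal is correct and follows essentially the same route as the paper: the paper applies Lemma \ref{L6.1} twice (once to $\phi$ to capture $Y$, once to the identity morphism of $(U,\theta,\eta)$ to capture $X$), sets $N:=N_1+N_2$, and bounds $||\phi^*N||^G$ via the epimorphism $U'N\twoheadrightarrow \phi^*N$ coming from the coequalizer \eqref{4.3qi} together with Theorem \ref{T2.2} --- precisely your observation that both enlargements are monotone so a single round suffices. The one wrinkle is your use of Theorem \ref{T3.5} for the $X$-step, whose bound involves $\delta$ rather than $\gamma$ with no assumed relation between them (you flag this yourself); the paper sidesteps it by instead invoking Lemma \ref{L6.1} with $\phi=\mathrm{id}$, which yields the $\alpha^\gamma$ bound directly.
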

	
	\begin{proof} Applying Lemma \ref{L6.1} to the morphism $\phi:(U,\theta,\eta)\longrightarrow (U',\theta',\eta')$, we obtain $N_1\subseteq M$ in $EM_U$ with 
		$||N_1||^G\leq (\alpha^{\gamma'})^{\gamma'}=\alpha^{\gamma'}$  such that $Y\subseteq el_G(\phi^*N_1)$. Applying Lemma \ref{L6.1} again, this time to the identity morphism
		on $(U,\theta,\eta)$, we obtain $N_2\subseteq M$ in $EM_U$ such that $||N_2||^G\leq (\alpha^{\gamma'})^{\gamma'}=\alpha^{\gamma'}$  such that $X\subseteq el_G(N_2)$.  We set $N:=N_1+N_2\subseteq M$ in $EM_U$. We note that
		\begin{equation}
			X\subseteq el_G(N_2)\subseteq el_G(N)\qquad Y\subseteq el_G(\phi^*N_1) \subseteq el_G(\phi^*N)
		\end{equation} where the second relation follows from the fact that $\phi^*$ is exact, which gives $\phi^*N_1\subseteq \phi^*N$ in $EM_{U'}$. Since $N=N_1+N_2$, we have an epimorphism $N_1\oplus N_2\twoheadrightarrow N$. Accordingly, we have
		\begin{equation}\label{6.6ty}
			||N||^G\leq ||N_1\oplus N_2||^G\leq \alpha^{\gamma'}
		\end{equation} It remains to show that $||\phi^*N||^G\leq \alpha^{\gamma'}$.  For this, we note that by the definition in \eqref{4.3qi}, we have
		\begin{equation}\label{6.7ty}
			\phi^*(N):=Coeq\left(U'UN\doublerightarrow{\qquad\qquad}{}U'N\right)
		\end{equation} In particular, this means that there is an epimorphism $U'N\twoheadrightarrow \phi^*N$ in $\mathcal C$. By Theorem \ref{T2.2}, we know that $||U'N||^G\leq \lambda^{U'}\times (||N||^G)^{\kappa(G)}$. Accordingly, we have
		\begin{equation}
			||\phi^*N||^G\leq ||U'N||^G\leq \lambda^{U'}\times (||N||^G)^{\kappa(G)}\leq \alpha^{\gamma'}
		\end{equation}

	\end{proof}
	
	We will now show that $Mod^{cs}_c-\mathscr U$ has a generator.  We fix an infinite cardinal $\alpha$ such that
	\begin{equation}\label{sup6}
		\alpha\geq sup \{\mbox{$\gamma$, $\lambda^{\mathscr U_x}$, $x\in \mathbb V$}\}
	\end{equation}
	Let $\mathscr M\in Mod^{cs}_c-\mathscr U$ and take some $\zeta\in el_G(\mathscr M)$, given by $\zeta: \mathscr U_xG\longrightarrow \mathscr M_x$ for some $x\in \mathbb V$. Corresponding to $\zeta$, we consider as in the proof of Theorem \ref{Th4.7b} the subobject $\mathscr  P \subseteq \mathscr M$ in $Mod^{cs}-\mathscr U$ such that $\zeta\in el_G(\mathscr P)$ and $|el_G(\mathscr P)|\leq  \gamma^{\kappa(G)}\leq \alpha^\gamma$. We now choose a well ordering of the set $Mor(\mathbb Q)$ and consider the induced lexicographic order on $\mathbb N \times Mor(\mathbb Q)$. We proceed by induction on $\mathbb N \times Mor(\mathbb Q)$ to construct a family of subobjects $\{\mathscr N(n,\phi) : n\in \mathbb N, \phi\in Mor(\mathbb Q)\}$ of $\mathscr M$ in $Mod^{cs}-\mathscr U$ satisfying the following conditions.
	
	\smallskip
	(1) If $\phi_0$ is the least element of $Mor(\mathbb Q)$, then $\zeta\in el_G(\mathscr N(1,\phi_0))$.
	
	\smallskip
	(2) For any $(n,\phi)\leq (m,\psi)$ in $\mathbb N \times Mor(\mathbb Q)$, we have $\mathscr N(n,\phi) \subseteq \mathscr N(m,\psi)$
	
	\smallskip
	(3) For each $(n,\phi:y \longrightarrow z)$ in $\mathbb N \times Mor(\mathbb Q)$, the morphism $\mathscr N(n,\phi)^{\phi}:\phi^*\mathscr N(n,\phi)_y \longrightarrow \mathscr N(n,\phi)_z$ is an isomorphism in $EM_{\mathscr U_z}$.
	
	\smallskip
	(4) $|el_G(\mathscr N(n,\phi))|\leq \alpha^\gamma$.
	
	For  $(n,\phi:y \longrightarrow z)$ in $\mathbb N \times Mor(\mathbb Q)$, we begin the transfinite induction argument by setting
	\begin{equation}
		A_0^0(w):= 
		\begin{cases} 
			el_G(\mathscr P_w),\quad & \text{if}~~(n,\phi) = (1,\phi_0) \\
			\underset{(m,\psi)< (n,\phi)}\bigcup el_G(\mathscr N(m,\psi)_w), \quad &~~ \mbox{otherwise}
		\end{cases}
	\end{equation}
	for each $w \in \mathbb V$. Since each $A_0^0(w)\subseteq el_G(\mathscr M_w)$, $|A_0^0(w)|\leq \alpha^\gamma$, and $\mathscr M$  is cartesian, we use Lemma \ref{L6.2} to obtain a subobject $A_1^0(y) \subseteq \mathscr M_y$ in $EM_{\mathscr U_y}$ such that
	\begin{equation}\label{611dt}
		||A_1^0(y)||^G\leq \alpha^\gamma \quad ||\phi^*A_1^0(y)||^G\leq \alpha^\gamma\quad A^0_0(y) \subseteq el_G(A_1^0(y)) \quad A_0^0(z) \subseteq el_{G}(\phi^*A_1^0(y))
	\end{equation} 
	We now set $A_1^0(z) = \phi^*A_1^0(y)$ and set for each $w\in \mathbb V$:
	\begin{equation} \label{612dx}
		B_1^0(w)=\left\{\begin{array}{ll} el_G(A^0_1(w))  & \mbox{if $w=y,z$} \\
			A^0_0(w) & \mbox{otherwise}\\
		\end{array}\right.
	\end{equation} From \ref{611dt} and \eqref{612dx} it follows that for each $w \in \mathbb V$, $A^0_0(w) \subseteq B^0_1(w)$ and $|B^0_1(w)|\leq \alpha^\gamma$.
	\begin{lem}\label{L6.3}
		Let $X\subseteq el_G(\mathscr M)$ with $|X|\leq\alpha^\gamma$. Then there exists a subobject $\mathscr D\hookrightarrow\mathscr M$ in $Mod^{cs}-\mathscr U$ such that $X \subseteq el_G(\mathscr D)$ and  $|el_G(\mathscr D)|\leq\alpha^\gamma$.
	\end{lem}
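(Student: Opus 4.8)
The plan is to reduce the statement to the single‑element construction of Section 4 together with the cardinality estimates of Lemmas \ref{L2.1} and \ref{L4.6tte}. For each $\zeta \in X$ — say $\zeta : \mathscr U_{x_\zeta}G \longrightarrow \mathscr M_{x_\zeta}$ for some vertex $x_\zeta \in \mathbb V$ — Proposition \ref{P4.5} produces a subobject $\mathscr P^{(\zeta)} \subseteq \mathscr M$ in $Mod-\mathscr U$ with $\zeta \in el_G(\mathscr P^{(\zeta)})$, and Lemma \ref{L4.6tte} gives the bound $|el_G(\mathscr P^{(\zeta)})| \leq \gamma^{\kappa(G)}$, which is uniform in $\zeta$; in particular $||\mathscr P^{(\zeta)}_w||^G \leq \gamma^{\kappa(G)}$ for every $w \in \mathbb V$. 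I then take $\mathscr D := \sum_{\zeta \in X}\mathscr P^{(\zeta)} \subseteq \mathscr M$, the sum of subobjects formed in the Grothendieck category $Mod-\mathscr U$ (Theorem \ref{Th4.7b}). Since (co)limits, and hence sums of subobjects, in $Mod-\mathscr U$ are computed pointwise, one has $\mathscr D_w = \sum_{\zeta \in X}\mathscr P^{(\zeta)}_w \subseteq \mathscr M_w$ for each $w \in \mathbb V$. The inclusion $X \subseteq el_G(\mathscr D)$ is immediate, since $\zeta \in el_G(\mathscr P^{(\zeta)}) \subseteq el_G(\mathscr D)$ for every $\zeta$.

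The remaining work is the bound on $|el_G(\mathscr D)| = \sum_{w \in \mathbb V}||\mathscr D_w||^G$. Fixing $w$, the canonical map $\bigoplus_{\zeta \in X}\mathscr P^{(\zeta)}_w \twoheadrightarrow \mathscr D_w$ is an epimorphism in $EM_{\mathscr U_w}$, hence on underlying objects of $\mathcal C$; as $G$ is projective this gives $||\mathscr D_w||^G \leq ||\bigoplus_{\zeta \in X}\mathscr P^{(\zeta)}_w||^G$. I apply Lemma \ref{L2.1} with index set $X$ (so $|S| = |X| \leq \alpha^\gamma$), $\mu = \gamma^{\kappa(G)}$ and $\lambda = \alpha^\gamma$; this is legitimate because $\kappa(G) \leq \gamma \leq \alpha \leq \alpha^\gamma$, and it yields $||\mathscr D_w||^G \leq (\gamma^{\kappa(G)})^{\kappa(G)} \times (\alpha^\gamma)^{\kappa(G)}$. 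Since $\kappa(G)$ is infinite with $\kappa(G) \leq \gamma \leq \alpha$, the exponents collapse: $(\gamma^{\kappa(G)})^{\kappa(G)} = \gamma^{\kappa(G)} \leq \alpha^\gamma$ and $(\alpha^\gamma)^{\kappa(G)} = \alpha^\gamma$, so $||\mathscr D_w||^G \leq \alpha^\gamma$. Summing over $\mathbb V$ and using $|\mathbb V| \leq |Mor(\mathbb Q)| \leq \gamma \leq \alpha^\gamma$ from \eqref{crd4rf}, I conclude $|el_G(\mathscr D)| \leq \alpha^\gamma \times \alpha^\gamma = \alpha^\gamma$, which is what is claimed.

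I do not anticipate a genuine obstacle here: this is a routine ``take a submodule for each element and add them up'' argument. The only points needing care are the cardinal bookkeeping — keeping track of the fact that every exponentiation appearing collapses because $\kappa(G)$, $\gamma$, $\alpha$ are infinite with $\kappa(G) \leq \gamma \leq \alpha$ — and recalling that sums of subobjects in $Mod-\mathscr U$ are computed pointwise, so that the pointwise formula $\mathscr D_w = \sum_\zeta \mathscr P^{(\zeta)}_w$ indeed assembles into a subobject in $Mod-\mathscr U$. It is also worth observing that flatness of $\mathscr U$, although a standing hypothesis of this section, is not used in this lemma; only the exactness and colimit‑preservation of the monads $\mathscr U_x$ (already invoked in Proposition \ref{P4.5} and Lemma \ref{L4.6tte}) enter.
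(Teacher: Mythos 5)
Your proposal is correct and follows essentially the same route as the paper: for each $\zeta\in X$ take the subobject $\mathscr P^{(\zeta)}\subseteq\mathscr M$ from the construction in Proposition \ref{P4.5}/Theorem \ref{Th4.7b} with $|el_G(\mathscr P^{(\zeta)})|\leq\gamma^{\kappa(G)}$, sum them, and bound the result via Lemma \ref{L2.1}. Your write-up just makes the choice of $\mu$ and $\lambda$ and the collapsing of exponents more explicit than the paper does.
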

	\begin{proof}
		Let $\zeta\in X\subseteq el_G(\mathscr M)$. Then, using Theorem \ref{Th4.7b}, we choose a subobject $\mathscr D_{\zeta}\hookrightarrow\mathscr M$ such that $\zeta\in el_G(\mathscr D_{\zeta})$ and  $|el_G(\mathscr D_\zeta)|\leq\gamma^{\kappa(G)}\leq \alpha^\gamma$. Now, we set $\mathscr D := \underset{\zeta\in X}\sum\mathscr D_\zeta$. Clearly, $\mathscr D$ is a quotient of $\underset{\zeta\in X}\bigoplus\mathscr D_\zeta$ and $X \subseteq el_G(\mathscr D)$. Further, using Lemma \ref{L2.1} and the definition in \eqref{el45t}, we get
		\begin{equation}
			|el_G(\mathscr D)| \leq \left\vert el_G \left(\underset{\zeta\in X}\bigoplus\mathscr D_\zeta\right)\right\vert \leq  \underset{y\in\mathbb V}\sum\left\vert EM_{\mathscr U_y}\left(\mathscr U_yG,\underset{\zeta\in X}\bigoplus\mathscr{D}_{\zeta_y}\right)\right\vert=\underset{y\in\mathbb V}\sum\left\vert\mathcal C\left(G,\underset{\zeta\in X}\bigoplus\mathscr{D}_{\zeta_y}\right)\right\vert\leq \alpha^\gamma
		\end{equation}
	\end{proof}
	Now using Lemma \ref{L6.3}, we choose a subobject $\mathscr D^0(n,\phi)\hookrightarrow\mathscr M$ in $Mod^{cs}-\mathscr U$ such that $\underset{w\in\mathbb V}\bigcup B^0_1(w) \subseteq el_G(\mathscr D^0(n,\phi))$ and $|el_G(\mathscr D^0(n,\phi))|\leq \alpha^\gamma$. In particular, for each $w\in\mathbb V$, $B^0_1(w)\subseteq el_G(\mathscr D^0(n,\phi)_w)$. 
	
	\smallskip
	We now iterate this construction. Suppose that for every $r\leq s$ we have constructed a subobject $\mathscr D^r(n,\phi)\hookrightarrow\mathscr M$ in
	$Mod^{cs}-\mathscr U$  such that  $\underset{w\in\mathbb V}\bigcup B^r_1(w) \subseteq el_G(\mathscr D^r(n,\phi))$ and $|el_G(\mathscr D^r(n,\phi))|\leq \alpha^\gamma$.  Then, for each $w\in\mathbb V$, we set $A_0^{s+1}(w):= el_G(\mathscr D^s(n,\phi)_w)$. Again using Lemma \ref{L6.2}, we get $A_1^{s+1}(y)\subseteq\mathscr M_y$ in $EM_{\mathscr U_y}$ such that
	\begin{equation}
		||A_1^{s+1}(y)||^G\leq \alpha^\gamma \quad ||\phi^*A_1^{s+1}(y)||^G\leq \alpha^\gamma \quad A^{s+1}_0(y) \subseteq el_G(A_1^{s+1}(y)) \quad A_0^{s+1}(z) \subseteq el_{G}(\phi^*A_1^{s+1}(y))
	\end{equation} 
	We now set $A_1^{s+1}(z) = \phi^*A_1^{s+1}(y)$. For $w\in \mathbb V$, we set $B_1^{s+1}(w)=el_G(A^{s+1}_1(w))$ if $w=y,z,$  and 
	$B^{s+1}_1(w)=A_0^{s+1}(w)= el_G(\mathscr D^s(n,\phi)_w)$ otherwise. It follows that for each $w \in \mathbb V$, $A_0^{s+1}(w) \subseteq B^{s+1}_1(w)$ and $|B_1^{s+1}(w)|\leq \alpha^\gamma$. Using Lemma \ref{L6.3}, we now choose $\mathscr D^{s+1}(n,\phi)\hookrightarrow \mathscr M$ such that $\underset{w\in\mathbb V}\bigcup B^{s+1}_1(w) \subseteq el_G(\mathscr D^{s+1}(n,\phi))$ and $|el_G(\mathscr D^{s+1}(n,\phi))|\leq \alpha^\gamma$.   In particular, for each $w\in\mathbb V$, $B^{s+1}_1(w)\subseteq el_G(\mathscr D^{s+1}(n,\phi)_w)$. 
	We note that we have constructed an ascending chain 
	\begin{equation}\label{615th}
		\mathscr D^{0}(n,\phi)\leq \mathscr D^{1}(n,\phi)\leq\ldots\leq\mathscr D^{s}(n,\phi)\leq \ldots
	\end{equation}
	of subobjects of $\mathscr M$ in $Mod^{cs}-\mathscr U$.
	Finally, we define 
	\begin{equation}\label{616th}
		\mathscr N(n,\phi):= \underset{s\geq0}\varinjlim\textrm{ } \mathscr D^{s}(n,\phi)
	\end{equation}
	in $Mod^{cs}-\mathscr U$. Since each $|el_G(\mathscr D^{s}(n,\phi))| \leq \alpha^\gamma$, we have $|el_G(\mathscr N(n,\phi))| \leq \alpha^\gamma.$ Clearly, the family $\{\mathscr N(n,\phi)~|~(n,\phi)\in\mathbb N\times Mor(\mathbb Q)\}$ satisfies the conditions $(1)$ and $(2)$. For $(4)$, we note that $\mathscr N(n,\phi)$ is a quotient of $\underset{s\geq 0}
	{\bigoplus}\mathscr D^{s}(n,\phi)$ and that each $||\underset{s\geq 0}
	{\bigoplus}\mathscr D^{s}(n,\phi)_w||^G\leq (\alpha^\gamma)^{\kappa(G)}=\alpha^\gamma$ by Lemma \ref{L2.1}.
	For $(3)$, we note that $\mathscr N(n,\phi)_y$ can be expressed as the filtered union 
	\begin{equation}
		A_1^{0}(y)\hookrightarrow\mathscr D^{0}(n,\phi)_y\hookrightarrow A_1^{1}(y)\hookrightarrow\mathscr D^{1}(n,\phi)_y\hookrightarrow\cdots\hookrightarrow A_1^{s}(y)\hookrightarrow\mathscr D^{s}(n,\phi)_y\hookrightarrow \cdots
	\end{equation}
	of objects in $EM_{\mathscr U_y}$. Since $\phi^*$ is exact and a left adjoint, it preserves monomorphisms and filtered colimits. Hence we can also express $\phi^*\mathscr N(n,\phi)_y$ as a filtered union
	\begin{equation}
		\phi^*A_1^{0}(y)\hookrightarrow\phi^*\mathscr D^{0}(n,\phi)_y\hookrightarrow \phi^*A_1^{1}(y)\hookrightarrow\phi^*\mathscr D^{1}(n,\phi)_y\hookrightarrow\cdots\hookrightarrow \phi^*A_1^{s}(y)\hookrightarrow\phi^*\mathscr D^{s}(n,\phi)_y\hookrightarrow \cdots
	\end{equation}
	of objects in $EM_{\mathscr U_z}$. Similarly, $\mathscr N(n,\phi)_z$ can be expressed as the filtered union 
	\begin{equation}
		A_1^{0}(z)\hookrightarrow\mathscr D^{0}(n,\phi)_z\hookrightarrow A_1^{1}(z)\hookrightarrow\mathscr D^{1}(n,\phi)_z\hookrightarrow\cdots\hookrightarrow A_1^{s}(z)\hookrightarrow\mathscr D^{s}(n,\phi)_z\hookrightarrow \cdots
	\end{equation}
	of objects in $EM_{\mathscr U_z}$. By definition, we know that $\phi^*A_1^{s}(y) = A_1^{s}(z)$ for each $s\geq 0$. Therefore, we obtain the required isomorphism $\mathscr N(n,\phi)^\phi : \phi^*\mathscr N(n,\phi)_y\longrightarrow\mathscr N(n,\phi)_z$.
	\begin{lem}\label{L6.4}
		Let $\mathscr M$ be a cartesian module over a flat monad quiver $\mathscr U:\mathbb Q\longrightarrow Mnd(\mathscr C)$. Let $\zeta\in el_G(\mathscr M)$. Then there exists a subobject $\mathscr N\subseteq\mathscr M$ in $Mod^{cs}_c-\mathscr U$ such that $\zeta\in el_G(\mathscr N)$ and $|el_G(\mathscr N)| \leq \alpha^\gamma$.
	\end{lem}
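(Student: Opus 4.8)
The plan is to glue the transfinite sequence of subobjects $\{\mathscr N(n,\phi)\}$ constructed above into a single cartesian subobject of $\mathscr M$. I would set
\begin{equation}
\mathscr N:=\underset{(n,\phi)\in \mathbb N\times Mor(\mathbb Q)}{\varinjlim}\mathscr N(n,\phi),
\end{equation}
the colimit being taken in $Mod-\mathscr U$ over the lexicographically ordered index set, which is directed since $(\max\{n,m\}+1,\phi_0)$ dominates any pair $(n,\phi),(m,\psi)$. By condition $(2)$ the $\mathscr N(n,\phi)$ form a directed system of subobjects of $\mathscr M$; since $Mod-\mathscr U$ is Grothendieck (Theorem \ref{Th4.7b}) with filtered colimits computed vertexwise, the induced morphism $\mathscr N\hookrightarrow\mathscr M$ is a monomorphism and $\mathscr N_x=\underset{(n,\phi)}{\varinjlim}\,\mathscr N(n,\phi)_x$ for every $x\in\mathbb V$. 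First, condition $(1)$ gives $\zeta\in el_G(\mathscr N(1,\phi_0))\subseteq el_G(\mathscr N)$.

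For the cardinality bound I would observe that the canonical morphism $\underset{(n,\phi)}{\bigoplus}\mathscr N(n,\phi)\twoheadrightarrow\mathscr N$ is an epimorphism in $Mod-\mathscr U$; since $G$ is projective, $|el_G(-)|$ cannot increase along epimorphisms, so it is enough to bound $\big|el_G\big(\underset{(n,\phi)}{\bigoplus}\mathscr N(n,\phi)\big)\big|=\underset{x\in\mathbb V}{\sum}\big|EM_{\mathscr U_x}\big(\mathscr U_xG,\underset{(n,\phi)}{\bigoplus}\mathscr N(n,\phi)_x\big)\big|$. The index set has cardinality at most $|Mor(\mathbb Q)|\leq\gamma$, each $|el_G(\mathscr N(n,\phi))|\leq\alpha^\gamma$ by condition $(4)$, and $\mathscr U_xG$ is $\kappa(G)$-presentable in $EM_{\mathscr U_x}$ by Lemma \ref{L3.2}. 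A routine application of Lemma \ref{L2.1} in each $EM_{\mathscr U_x}$, followed by summation over the at most $\gamma$ vertices, then yields $|el_G(\mathscr N)|\leq\alpha^\gamma$.

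The heart of the matter is showing $\mathscr N\in Cart-\mathscr U$. Fix an edge $\phi:y\longrightarrow z$ in $\mathbb Q$. The subchain $\{(n,\phi):n\in\mathbb N\}$ is cofinal in $\mathbb N\times Mor(\mathbb Q)$, since $(m+1,\phi)\geq(m,\psi)$ for every $(m,\psi)$; hence $\mathscr N_y=\underset{n}{\varinjlim}\,\mathscr N(n,\phi)_y$ and $\mathscr N_z=\underset{n}{\varinjlim}\,\mathscr N(n,\phi)_z$, and because $\phi^*$ is a left adjoint it commutes with this filtered colimit, so $\phi^*\mathscr N_y=\underset{n}{\varinjlim}\,\phi^*\mathscr N(n,\phi)_y$. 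Consequently $\mathscr N^\phi=\underset{n}{\varinjlim}\,\mathscr N(n,\phi)^\phi$, a colimit of the maps $\mathscr N(n,\phi)^\phi:\phi^*\mathscr N(n,\phi)_y\longrightarrow\mathscr N(n,\phi)_z$. Each of these is an isomorphism by condition $(3)$, and they are compatible with the transition morphisms by condition $(2)$ together with the definition of a morphism in $Mod-\mathscr U$; a filtered colimit of isomorphisms is an isomorphism, so $\mathscr N^\phi$ is an isomorphism. As $\phi$ was arbitrary, $\mathscr N$ is cartesian, and the three required properties are in hand.

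The one genuinely delicate point is the simultaneity in the previous paragraph: enlarging a module to be cartesian along one edge can destroy cartesianness along another, which is exactly why the construction preceding the lemma runs through all of $Mor(\mathbb Q)$ repeatedly (the $\mathbb N$-coordinate) and interleaves the applications of Lemma \ref{L6.2} and Lemma \ref{L6.3}. The cofinality observation is precisely what converts ``cartesian at $\phi$ at cofinally many stages'' into ``cartesian at $\phi$ in the colimit'', and everything else is bookkeeping already prepared by conditions $(1)$–$(4)$.
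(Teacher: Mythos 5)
Your proposal is correct and follows essentially the same route as the paper: take the directed union/colimit of the $\mathscr N(n,\phi)$ over $\mathbb N\times Mor(\mathbb Q)$, use cofinality of $\{(n,\phi):n\in\mathbb N\}$ for each fixed edge $\phi$ together with the fact that $\phi^*$ preserves filtered colimits to see that $\mathscr N^\phi$ is a filtered colimit of isomorphisms, and bound $|el_G(\mathscr N)|$ by $\alpha^\gamma$. Your cardinality estimate via the epimorphism from the direct sum and Lemma \ref{L2.1} is in fact slightly more careful than the paper's one-line assertion.
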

	\begin{proof}
		Since $\mathbb N\times Mor(\mathbb Q)$ is filtered, we set 
		\begin{equation}
			\mathscr N = \underset{(n,\phi)\in\mathbb N\times Mor(\mathbb Q)}\bigcup\mathscr N(n,\phi)\subseteq\mathscr M
		\end{equation}
		in $Mod^{cs}-\mathscr U$. Clearly, $\zeta\in el_G(\mathscr N).$ Also, as each $|el_G(\mathscr N(n,\phi))|\leq \alpha^\gamma$, we have $|el_G(\mathscr N)|\leq\alpha^\gamma$. Next, we note that for a fixed morphism $\rho:z\longrightarrow w$ in $\mathbb Q$, the family $\{(m,\rho)~|~m\geq 1\}$ is cofinal in $\mathbb N\times Mor(\mathbb Q)$. Therefore, 
		\begin{equation}
			\mathscr N = \underset{m\geq 1}\varinjlim\mathscr N(m,\rho)
		\end{equation}
		Further, as $\mathscr N(m,\rho)^\rho : \rho^*\mathscr N(m,\rho)_z\longrightarrow\mathscr N(m,\rho)_w$ is an isomorphism, it follows that the filtered colimit $\mathscr N^\rho:\rho^*\mathscr N_z \longrightarrow\mathscr N_w$ is also an isomorphism.
	\end{proof}
	\begin{Thm}\label{Th6.5}
		 $Mod^{cs}_c-\mathscr U$  is a Grothendieck category.
	\end{Thm}
	\begin{proof}
		We already know that $Mod^{cs}_c-\mathscr U$ is an abelian category. Now, since filtered colimits and finite limits of $Mod^{cs}_c-\mathscr U$ are computed in $Mod^{cs}-\mathscr U$, and $\mathscr U:\mathbb Q\longrightarrow Mnd(\mathcal C)$ is flat, it is also clear  $Mod^{cs}_c-\mathscr U$   satisfies the (AB5) condition. Further, from Lemma \ref{L6.4}, we see that any $\mathscr M \in Mod^{cs}_c-\mathscr U$ can be expressed as sum of a family $\{\mathscr N_\zeta~|~\zeta\in el_G(\mathscr M)\}$ of cartesian subobjects where each $|el_G(\mathscr N_\zeta)|\leq\alpha^\gamma$. Therefore, the isomorphism classes of cartesian modules $\mathscr N$ satisfying $|el_G(\mathscr N)|\leq\alpha^\gamma$ give a set of generators for $Mod^{cs}_c-\mathscr U$.
	\end{proof}
	\begin{Thm}\label{Th6.6}
		The  inclusion functor $i: Mod^{cs}_c-\mathscr U \longrightarrow Mod^{cs}-\mathscr U$ has a right adjoint.
	\end{Thm}
	\begin{proof}
		We see that the inclusion functor $i: Mod^{cs}_c-\mathscr U \longrightarrow Mod^{cs}-\mathscr U$ preserves colimits.  Since $Mod^{cs}_c-\mathscr U$ and $Mod^{cs}-\mathscr U$ are Grothendieck categories, it follows (see, for instance, \cite[Proposition 8.3.27]{KS}) that $i$ has a right adjoint.
	\end{proof}
	\subsection{Cartesian trans-modules over a monad quiver}
	
	Let $(U,\theta,\eta)$ and $(U',\theta',\eta')$ be monads on $\mathcal C$ that are exact and preserves colimits. Let $\phi: U\longrightarrow U'$ be a morphism in 
	$Mnd(\mathcal C)$. We know that the induced restriction functor $\phi_{*}: EM_{U'}\longrightarrow EM_{U}$ is exact. We will say that $\phi$ is coflat if the functor $\hat{\phi}: EM_{U}\longrightarrow EM_{U'}$ is exact. We continue with $\gamma\geq sup\{\mbox{$Mor(\mathbb Q)$, $\kappa(G)$, $||\mathscr U_yG||^G$, $y\in Ob(\mathbb Q)$} \}$

	\smallskip
	
	Let $V$ and $V'$ be respectively the right adjoint comonads of $U$ and $U'$. Suppose additionally that  $V$ and $V'$ preserve colimits. Then, it is clear from definition in \eqref{eq4.17} that $\hat{\phi}$ preserves filtered colimits.

	\begin{lem}\label{L6.8}
		Let $\phi: U \longrightarrow U'$ be a coflat morphism of monads on $\mathcal C$. Let $\gamma'\geq \{\mbox{$Mor(\mathbb Q)$, $\kappa(G)$, $||UG||^G$, $||U'G||^G$} \}$ and $\alpha\geq \lambda^U$, $\gamma'$. Let $(M,f_{M})\in EM_{U}$ and let $X\subseteq el_{G}(\hat{\phi}M)$ be a subset such that $|X|\leq \alpha$. Then, there exists a subobject $N\subseteq M$ in $EM_{U}$ such that $X\subseteq el_{G}(\hat{\phi}N)$ and $||N||^{G}\leq \alpha^{\gamma'}$. 
	\end{lem}
	\begin{proof}
		We consider a morphism $x\in \mathcal C(G, \hat{\phi}M)$. Let  $\hat{x}\in EM_{U'}(U'G,\hat{\phi}M)$ be the corresponding morphism in $EM_{U'}$. Since $UG$ is a generator for $EM_{U},$ there exists an epimorphism $g: (UG)^{(I)}\longrightarrow M$ for some index set $I$. Since $\hat{\phi}$ is exact, it follows that $\hat{\phi}(g)$ is an epimorphism in $EM_{U'}$. 
		As $U'G$ is  projective and $\hat{\phi}$ preserves direct sums, the morphism $\hat{x}:U'G\longrightarrow \hat{\phi}M$ factors through a morphism $\zeta:U'G\longrightarrow (\hat{\phi}(UG)^{(I)})$ in $EM_{U'}.$  Since $\gamma'\geq \kappa(G)$, it follows from Lemma \ref{L3.2} that  $U'G$ is $\gamma'$-presentable in $EM_{U'}$. Hence,  there exists a subset $I_x\subseteq I$ with $|I_x|<\gamma'$ such that the following diagram commutes
		\begin{equation}\label{eq6.23}
			\begin{tikzcd}
				(\hat{\phi}(UG)^{(I_x)}) \arrow{dd}{\iota} & & U'G\arrow{dd}{\hat{x}}\arrow{ll}{}\arrow{ddll}{\zeta} \\
				& &\\
				(\hat{\phi}(UG)^{(I)}) \arrow{rr}{\hat{\phi}(g)} & & \hat{\phi}M ~. \\
			\end{tikzcd}
		\end{equation}
		Therefore, we have a morphism $\zeta_x: (UG)^{(I_x)}\longrightarrow (UG)^{(I)}\longrightarrow M$ in $EM_U$ such that $\hat{x}$ factors through $\hat{\phi}(\zeta_x)$. We now define a subobject $N\subseteq M$ in $EM_U$ as 
		\begin{equation}
			N:=Im\left(\zeta':= \underset{x\in X}{\bigoplus} \zeta_x:\underset{x\in X}{\bigoplus} (UG)^{(I_x)}\longrightarrow M \right),
		\end{equation}
		Since $\hat{\phi}$ is exact and preserves direct sums, we have 
		\begin{equation}
			\hat{\phi}(N):=Im\left(\hat{\phi}(\zeta')=\underset{x\in X}{\bigoplus} \hat{\phi}(\zeta_x):\underset{x\in X}{\bigoplus} \hat{\phi}((UG)^{(I_x)})\longrightarrow \hat{\phi}M \right).
		\end{equation}
		It is now clear that $X\subseteq  EM_{U'}(U'G,\hat{\phi}N)$. Further,   
		\begin{equation}
			||N||^{G}\leq ||\underset{x\in X}{\bigoplus} (UG)^{(I_x)}||^{G} \leq (\lambda^U)^{\kappa(G)}\times (\alpha\times \gamma')^{\kappa(G)}\leq  \alpha^{\gamma'}~.
		\end{equation}
	\end{proof}
	
	\begin{lem}\label{L6.9}
		Let $\phi: U \longrightarrow U'$ be a coflat morphism of monads on $\mathcal C$. Let $\gamma'\geq \{\mbox{$Mor(\mathbb Q)$, $\kappa(G)$, $||UG||^G$, $||U'G||^G$} \}$ and $\alpha\geq \gamma', \lambda^U,\lambda^{U'}, \lambda^V, 
		\lambda^{V'}$. Let $(M,f_M)\in EM_U$ and let $X\subseteq el_{G}(M)$, $Y\subseteq el_{G}(\hat{\phi}M)$ be subsets such that $|X|, |Y|\leq \alpha^{\gamma'}$. Then, there exists a subobject $N\subseteq M$ in $EM_U$ such that
		\begin{enumerate}
			\item $X\subseteq el_{G}(N)$, $Y\subseteq el_{G}(\hat{\phi}N)$, 
			\item $||N||^{G}\leq \alpha^{\gamma'}$ and $|| \hat{\phi}N||^{G}\leq \alpha^{\gamma'}$. 
		\end{enumerate}
	\end{lem}
	\begin{proof} Applying Lemma \ref{L6.8} to the morphism $\phi:U\longrightarrow U'$, we obtain  $N_1\subseteq M$ in $EM_U$ such that $Y\subseteq el_{G}(\hat{\phi}N_1)$
	and $||N_1||^G \leq (\alpha^{\gamma'})^{\gamma'}=\alpha^{\gamma'}$. Also applying Lemma \ref{L6.8} to the identity morphism on $U$, we obtain $N_2\subseteq M$ in 
	$EM_U$ such that $X\subseteq el_{G}(N_2)$ and $||N_1||^G\leq (\alpha^{\gamma'})^{\gamma'}=\alpha^{\gamma'}$. We set $N:=N_1+N_2\subseteq M$ in $EM_U$. 
	We have $X\subseteq  el_{G}(N_2)\subseteq  el_{G}(N)$ and $Y\subseteq el_{G}(\hat{\phi}N_1)\subseteq  el_{G}(\hat{\phi}N)$ (note that $\hat\phi(N_1)\subseteq \hat\phi(N)$
	since $\hat\phi$ is a right adjoint). 
		As in the  proof of Lemma \ref{L6.2}, we now see that $||N||^G\leq ||N_1\oplus N_2||^G\leq \alpha^{\gamma'}$.  From the definition in 
		\eqref{eq4.17}, we know that   $\hat{\phi}N \subseteq V'N$.  By  Theorem \ref{T2.2} we now have
		\begin{equation}
			|| \hat{\phi}N||^{G}=|EM_{U'}(U'G, \hat{\phi}N)|\leq |EM_{U'}(U'G, V'N)|=|\mathcal{C}(G,V'N)|=||V'N||^G\leq \lambda^{V'}\times (||N||^{G})^{\kappa(G)}\leq \alpha^{\gamma'}~.
		\end{equation}
	\end{proof}

	\begin{defn}\label{D6.7}
		Let $\mathscr U:\mathbb Q\longrightarrow Mnd(\mathcal C)$ be a monad quiver that takes values in monads that are exact and preserve colimits. Then, we say that $\mathscr U$ is coflat if for each edge $\phi:x\longrightarrow y$ in $\mathbb E$, the morphism $\mathscr U(\phi):\mathscr U(x)\longrightarrow \mathscr U(y)$ of monads is coflat. 
	\end{defn}
	Let $\mathscr U:\mathbb Q\longrightarrow Mnd(\mathcal C)$ be a monad quiver that is coflat and which takes values in monads that are exact and preserve colimits. For each vertex $x\in\mathbb V$, let $\mathscr V_x$ be the comonad such that $(\mathscr U_x, \mathscr V_x)$ is an adjoint pair. Suppose additionally that each $\mathscr V_x$ is exact and preserves colimits. Then, it is clear from definition in \eqref{eq4.17} that $\hat{\phi}$ preserves direct sums for each edge $\phi\in \mathbb E$.  As in Section 6.1, we see that $Mod^{tr}_{c}-\mathscr U$ is a cocomplete abelian category, with colimits and finite limits computed pointwise at each $x\in \mathbb V$.

\smallskip	
	We will now show that there is a set of generators in $Mod^{tr}_{c}-\mathscr U$. From now onwards, we take $\mathbb Q$ to be a poset and fix a regular cardinal $\alpha$ such that
	\begin{equation}\label{eq10.4}
		\alpha\geq sup\{\gamma, \lambda^{\mathscr U_{x}},\lambda^{\mathscr V_{x}}, x\in\mathbb V\},
	\end{equation}
	where $(\mathscr U_x,\mathscr V_x)$ is a pair of adjoint functors for each $x\in\mathbb V$.
	
	\smallskip
	
	Let $\mathscr M \in Mod^{tr}_{c}-\mathscr U$ and $\zeta\in el_{G}(\mathscr M)$. Then by Proposition \ref{P4.11} and Lemma \ref{L4.12}, there exists a subobject $\mathscr N\subseteq \mathscr M$ in  $Mod^{tr}-\mathscr U$ such that $\zeta\in el_{G}(\mathscr N)$ and $|el_{G}(\mathscr N)|\leq \alpha^{\gamma}$. We fix a well ordering on 
	$Mor(\mathbb Q)$, which induces a lexicographic ordering on $\mathbb N\times Mor(\mathbb Q)$. We proceed with arguments similar to
	Section 6.1 to show that there exists a collection of subobjects $\{\mathscr P(n,\phi)~|~(n,\phi)\in \mathbb N\times Mor(\mathbb Q)\}$ of 
	$\mathscr M$ in $Mod^{tr}-\mathscr U$ that satisfies the following conditions.
	
	\smallskip
	
	(1) If $\phi_0$ is the least element of $Mor(\mathbb Q)$, then $\zeta\in el_G(\mathscr P(1,\phi_0))$.
	
	\smallskip 
	
	(2) If $(n,\phi)\leq (m,\psi)$ in $\mathbb N \times Mor(\mathbb Q)$, then $\mathscr P(n,\phi) \subseteq \mathscr P(m,\psi)$.
	
	\smallskip
	(3)For given $(n,\phi:y \longrightarrow z)$ in $\mathbb N \times Mor(\mathbb Q)$, the morphism $_{\phi}\mathscr P(n,\phi):\mathscr P(n,\phi)_z \longrightarrow \hat{\phi}\mathscr P(n,\phi)_y$ is an isomorphism in $EM_{\mathscr U_z}$.

	\smallskip
	
	(4) $|el_{G}(\mathscr P(n,\phi))|\leq \alpha^{\gamma}$.
	
	\smallskip
	For  $(n,\phi:y\longrightarrow z)\in \mathbb N\times Mor(\mathbb Q)$ and $w\in\mathbb V,$ we begin the transfinite induction by defining  a subset $X_0^0(w)\subseteq el_{G}(\mathscr M_w)$ as follows
	\begin{equation*}
		X_0^0(w):= 
		\begin{cases} 
			el_{G}(\mathscr N_w),\quad & \text{if}~~(n,\phi) = (1,\phi_0) \\
			\underset{(m,\psi)< (n,\phi)}\bigcup el_{G}(\mathscr P(m,\psi)_w), \quad &~~ \mbox{otherwise}~.
		\end{cases}
	\end{equation*}
	
	Clearly, $X_0^0(w)\subseteq el_{G}(\mathscr M_w)$ and $|X^0_0(w)|\leq \alpha^\gamma$. Since $\mathscr M$ is cartesian,  by Lemma \ref{L6.9}, there exists a subobject $X_{1}^0(y)\subseteq \mathscr M_y$ such that 
	\begin{equation*}
		||X_1^0(y))||^G\leq \alpha^{\gamma} \quad ||\hat{\phi}(X_1^0(y))||^G\leq \alpha^{\gamma}  \quad X^0_0(y) \subseteq el_{G}(X_1^0(y))  \quad  X_0^0(z) \subseteq el_{G}(\hat{\phi}(X_1^0(y))).
	\end{equation*} 
	Now, set $X_1^0(z) = \hat{\phi}X_1^0(y)$.  For each $w\in \mathbb V$, we set
	\begin{equation*}
		Y_1^0(w):=\left\{\begin{array}{ll} el_{G}(X_1^0(w))  & \mbox{if $w=y,z$} \\
			X^0_0(w) & \mbox{otherwise}.\\
		\end{array}\right.
	\end{equation*}
	Then $X_{0}^{0}(w)\subseteq Y_{1}^{0}(w)$ and $|Y_{1}^{0}(w)|\leq \alpha^{\gamma}.$
	
	\begin{lem}\label{L6.11}
		Let $X\subseteq el_G(\mathscr M)$ with $|X|\leq\alpha^\gamma$. Then there exists a subobject $\mathscr D\hookrightarrow\mathscr M$ in $Mod^{tr}-\mathscr U$ such that $X \subseteq el_G(\mathscr D)$ and  $|el_G(\mathscr D)|\leq\alpha^\gamma$.
	\end{lem}
	\begin{proof}
		Let $\zeta\in X\subseteq el_G(\mathscr M)$. By the proof of Theorem \ref{T4.13}, there exists a subobject $\mathscr D_{\zeta}\hookrightarrow\mathscr M$ in 
		$Mod^{tr}-\mathscr U$ such that $\zeta\in el_G(\mathscr D_{\zeta})$ and  $|el_G(\mathscr D_\zeta)|\leq\gamma^{\kappa(G)}\leq \alpha^\gamma$. Then, we set $\mathscr D := \underset{\zeta\in X}\sum\mathscr D_\zeta$. Since $\mathscr D$ is a quotient of $\underset{\zeta\in X}\bigoplus\mathscr D_\zeta$ and $X \subseteq el_G(\mathscr D)$, we have  from Lemma \ref{L2.1} that
		\begin{equation}
			|el_G(\mathscr D)| \leq \left\vert el_G \left(\underset{\zeta\in X}\bigoplus\mathscr D_\zeta\right)\right\vert \leq  \underset{y\in\mathbb V}\sum\left\vert EM_{\mathscr U_y}\left(\mathscr U_yG,\underset{\zeta\in X}\bigoplus\mathscr{D}_{\zeta_y}\right)\right\vert=\underset{y\in\mathbb V}\sum\left\vert\mathcal C\left(G,\underset{\zeta\in X}\bigoplus\mathscr{D}_{\zeta_y}\right)\right\vert\leq \alpha^\gamma
		\end{equation}
	\end{proof}
	Since $\underset{w\in\mathbb V}\bigcup Y_1^0(w)\subseteq el_{G}(\mathscr M)$ and $|\underset{w\in\mathbb V}\bigcup Y_1^0(w)|\leq \alpha^{\gamma},$ by Lemma \ref{L6.11}, we obtain a subobject $\mathscr Q^{0}(n,\phi)\subseteq \mathscr M$ such that $\underset{w\in\mathbb V}\bigcup Y_1^0(w)\subseteq el_{G}(\mathscr Q^{0}(n,\phi))$ (i.e., for each $w\in\mathbb V$, $Y_1^0(w)\subseteq el_{G}(\mathscr Q^0(n,\phi)_w)$ and $|el_{G}(\mathscr Q^{0}(n,\phi))|\leq \alpha^{\gamma}.$ 
	
	\smallskip
	
	We proceed by induction and suppose that for every $s\leq t,$ we have constructed a subobject $\mathscr Q^{s}(n,\phi)\subseteq \mathscr M$ that satisfies $\underset{w\in\mathbb V}\bigcup Y_1^s(w)\subseteq el_{G}(\mathscr Q^{s}(n,\phi))$ and $|el_{G}(\mathscr Q^{s}(n,\phi))|\leq \alpha^{\gamma}.$ Then, for each $w\in\mathbb V,$ we set $$X_{0}^{t+1}(w):=el_{G}(\mathscr Q^{t}(n,\phi)_w).$$ 
	On applying Lemma \ref{L6.9} again, we obtain a subobject  $X_{1}^{t+1}(y)\subseteq\mathscr M_y$ such that
	\begin{equation*}
		||X_1^{t+1}(y))||^G\leq \alpha^{\gamma} \quad ||\hat{\phi}(X_1^{t+1}(y))||^G\leq \alpha^{\gamma}  \quad X^{t+1}_0(y) \subseteq el_{G}( X_1^{t+1}(y))  \quad  X_0^{t+1}(z) \subseteq el_{G}( \hat{\phi}(X_1^{t+1}(y))).
	\end{equation*} 
	Now set $X_{1}^{t+1}(z)=\hat{\phi}(X_{1}^{t+1}(y))$ and for each $w\in\mathbb V$
	\begin{equation*} 
		Y_1^{t+1}(w):=\left\{\begin{array}{ll} el_{G} (X_1^{t+1}(w))  & \mbox{if $w=y,z$} \\
			X_0^{t+1}(w)=el_{G}(\mathscr Q^{t}(n,\phi)_w) & \mbox{otherwise}\\
		\end{array}\right.
	\end{equation*}
	Then, $X_{1}^{t+1}(w)\subseteq Y_{1}^{t+1}(w)$ and $|Y_{1}^{t+1}(w)|\leq \alpha^{\gamma}.$ Using Lemma \ref{L6.11}, there exists a subobject $\mathscr Q^{t+1}(n,\phi)\subseteq \mathscr M$ such that 
	$\underset{w\in\mathbb V}\bigcup Y_1^{t+1}(w)\subseteq el_{G}(\mathscr Q^{t+1}(n,\phi)),$ and $|el_{G}(\mathscr Q^{t+1}(n,\phi))|\leq \alpha^{\gamma}.$ In particular, for each $w\in\mathbb V$, we have $Y_1^{t+1}(w)\subseteq el_{G}(\mathscr Q^{t+1}(n,\phi)_w).$
	In this way, we have constructed subobjects of $\mathscr M$ in $Mod^{tr}-\mathscr U$ such that 
	\begin{equation*}
		\mathscr Q^{0}(n,\phi)\hookrightarrow\mathscr Q^{1}(n,\phi)\hookrightarrow\ldots\hookrightarrow\mathscr Q^{t}(n,\phi)\hookrightarrow\ldots~.
	\end{equation*}
	Finally, we set 
	\begin{equation}\label{eq6.30}
		\mathscr P(n,\phi):= \underset{t\geq 0}\bigcup \mathscr Q^{t}(n,\phi).
	\end{equation} Since  $\hat{\phi}$ is exact and preserves filtered colimits, it follows as in Section 6.1 that the  family $\{\mathscr P(n,\phi)~|~(n,\phi)\in\mathbb N\times Mor(\mathbb Q)\}$  of subobjects of $\mathscr M$ in $Mod^{tr}-\mathscr U$ constructed above satisfies the conditions (1)-(4).

	\begin{lem}\label{L6.13}
		Let $\mathscr M\in Mod^{tr}_{c}-\mathscr U$ and $\zeta\in el_{G}(\mathscr M).$ Then there exists a subobject $\mathscr P\subseteq \mathscr M$ in $Mod^{tr}_{c}-\mathscr U$ with $\zeta\in el_{G}(\mathscr P)$ such that $|el_{G}(\mathscr P)|\leq \alpha^{\gamma}.$  
	\end{lem}
	\begin{proof}
	
	The proof of this is similar to that of Lemma \ref{L6.4}
	\end{proof}
	\begin{Thm}\label{T6.14}
		 $Mod^{tr}_{c}-\mathscr U$ is a Grothendieck category. 
	\end{Thm}
	\begin{proof}
		Since filtered colimits and finite limits in $Mod^{tr}_{c}-\mathscr U$ are both computed in $Mod^{tr}-\mathscr U$, we see that  $Mod^{tr}_{c}-\mathscr U$ satisfies (AB5).  For any object $\mathscr M\in Mod^{tr}_{c}-\mathscr U$ and any element $\zeta\in el_{G}(\mathscr M)$, it follows from Lemma \ref{L6.13} that there exists a subobject $\mathscr P\subseteq \mathscr M$ in $Mod^{tr}_{c}-\mathscr U$ such that $\zeta\in el_{G}(\mathscr P)$ and $|el_{G}(\mathscr P)|\leq\alpha^{\gamma}.$ Therefore the isomorphism classes of cartesian modules $\mathscr P'$ with $|el_{G}(\mathscr P')|\leq \alpha^{\gamma}$ give a set of generators for $Mod^{tr}_{c}-\mathscr U$.
	\end{proof}
	\begin{Thm}\label{T6.15}
		The canonical inclusion $\iota: Mod^{tr}_{c}-\mathscr U\longrightarrow Mod^{tr}(\mathscr U)$ has a right adjoint.  
	\end{Thm}
	\begin{proof}
		Since  $Mod^{tr}_{c}-\mathscr U$ is a Grothendieck category and the inclusion functor $\iota$ preserves colimits, the result follows from \cite[Proposition 8.3.27]{KS}.  
	\end{proof}
	\section{Comodules over a comonad quiver} 
	We continue to assume that $\mathcal C$ is a Grothendieck category with a projective generator $G$.  We denote by $Cmd(\mathcal C)$ the category
of comonads over $\mathcal C$.  Let $(V,\delta,\epsilon)$ and $(V',\delta',\epsilon')$ be comonads in $\mathcal C$ and let $\phi:(V,\delta,\epsilon) \longrightarrow (V',\delta',\epsilon')$ be a morphism in $Cmd(\mathcal C)$. Then, $\phi$ induces a corestriction functor $\phi^{\circ}:EM^V\longrightarrow EM^{V'}$ defined by setting $\phi^{\circ}(M,f^M):= (M,\phi(M) \circ f^M)$ for 
$(M,f^M)\in EM^V$. Suppose that $V$ and $V'$ are exact and preserve colimits. Then, $\phi^{\circ}$ is exact and preserves colimits.
	We also have a functor $\phi_{\circ}:EM^{V'}\longrightarrow EM^{V}$ defined by setting 
	\begin{equation}\label{eq7.1}
		\phi_{\circ}(M'):= Eq\bigg( VM'\doublerightarrow{\qquad V\phi(M')\circ \delta(M')\qquad }{Vf^{M'}}VV'M'\bigg).
	\end{equation}  for each $(M',f^{M'})\in EM^{V'}$.
	As in Lemma \ref{L4.9}, we know that 	$(\phi^{\circ}, \phi_{\circ})$ is a pair of adjoint functors. In other words, we have natural isomorphisms 
\begin{equation*}EM^{V}(M,\phi_{\circ}(M'))\cong EM^{V'}(\phi^{\circ}(M), M').
\end{equation*} for any $(M,f^M) \in EM^V$ and $(M',f^{M'}) \in EM^{V'}$.
	
	\begin{defn}\label{D7.2}  A comonad quiver is a functor $\mathscr V:\mathbb Q\longrightarrow Cmd(\mathcal C)$. 
		
	\end{defn}

For each $x\in \mathbb V$, we denote the comonad $\mathscr V(x)$ by $\mathscr V_x$.  If $\phi: x\longrightarrow y$ is an edge of $\mathbb Q$, by abuse of notation, we will continue to denote $\mathscr V(\phi):
	\mathscr V_x\longrightarrow \mathscr V_y$ simply by $\phi$. Accordingly, we will often write
	$
	\phi^\circ=\mathscr V(\phi)^\circ:EM^{\mathscr V_x}\longrightarrow EM^{\mathscr V_y}$ and $\phi_\circ=\mathscr U(\phi)_\circ:EM^{\mathscr V_y}\longrightarrow EM^{\mathscr V_x}
	$ for an edge $\phi:x\longrightarrow y$ in $\mathbb Q$.

	\begin{eg}
		We shall now give a few examples of a comonad quiver on a category. Let $k$ be a field of characteristic zero. 
	\end{eg}

	\smallskip
	
	(1) Let $Vect_{k}$ denote the category of vector spaces over $k,$ and $Coalg_{k}$ denote the category of coalgebras over $k$. For any coalgebra $C\in Coalg_{k}$, $C\otimes_{k} -: Vect_{k}\longrightarrow Vect_{k}$ gives a comonad on $Vect_{k}.$  Let $\mathcal T: \mathbb Q\longrightarrow Coalg_{k}$ be a functor. Then, $\mathscr V:\mathbb Q\longrightarrow Cmd(Vect_{k})$ defined by  $\mathscr V(x):= \mathcal T(x)\otimes (-)$ is  a comonad quiver on $Vect_{k}.$

	\smallskip
	
	(2) Let $\mathfrak S_{k}$ be a $k$-linear Grothendieck category. For any $C\in Coalg_{k,}$ $C\otimes -:\mathfrak S_{k}\longrightarrow \mathfrak S_{k}$ defines a comonad on $\mathfrak S_{k}.$ The Eilenberg-Moore category of the comonad $C\otimes-$ is the category $\mathfrak S_{C}$ of $C$-comodule objects of $\mathfrak S_{k}$ (see, \cite[\S 39]{BWb}). Now, let $\mathcal T: \mathbb Q\longrightarrow Coalg_{k}$ be a functor. Then, $\mathscr V:\mathbb Q\longrightarrow Cmd(\mathfrak S_{k})$ defined by $\mathscr V(x):= \mathcal T(x)\otimes (-)$ is a comonad quiver on $\mathfrak S_{k}$.
	
	\smallskip
	
	(3) Let $H$ be a Hopf algebra. Then, the category $H-Mod$ of left $H$-modules is a monoidal category. Let $C$ be any coalgebra object in $H-Mod.$ Then, the functor $C\otimes -: H-Mod \longrightarrow H-Mod$ defines a comonad on $H-Mod$.  If $\mathcal T:\mathbb Q\longrightarrow Coalg(H-Mod)$ is a functor that takes values in coalgebra objects in $H-Mod$,  then the functor $\mathscr V:\mathbb Q\longrightarrow Cmd(H-Mod)$, $x\longrightarrow \mathcal T(x)\otimes (-)$ defines a comonad quiver on $H-Mod.$
	
	\smallskip
	
	In general, let $(\mathcal D,\otimes)$ be a $k$-linear monoidal category and $\mathcal L$ be a $k$-linear Grothendieck category with an action $-\otimes -:\mathcal D\times 
	\mathcal L\longrightarrow \mathcal L$. Then, any object in the category $Coalg(\mathcal D)$ of coalgebra objects in $\mathcal D$ defines a comonad on $\mathcal L$ given by $C\otimes-:\mathcal L\longrightarrow \mathcal L$.  If $\mathcal T: \mathbb Q\longrightarrow Coalg(\mathcal D)$ is a functor, then  $\mathscr V:\mathbb Q\longrightarrow Cmd(\mathcal L)$ given by $\mathscr V(x):= \mathcal T(x)\otimes (-)$ defines a comonad quiver on $\mathcal L$. 
	\smallskip

	(4) Let $\mathcal S$ be a Grothendieck category with a projective generator. Let $(U, \theta, \eta)$ be a Frobenius monad on $\mathcal S$. Then, we know from  \cite[Theorem 1.6]{RS} that $U$ has a right adjoint comonad structure $V=(U, \delta, \epsilon)$. Then, $EM_{U}\cong EM^V$, and thus $EM^{V}$ has a projective generator. Therefore, it follows that $V=(U, \delta, \epsilon)$ is a semiperfect comonad on $\mathcal C$. We now consider a functor $\mathscr U:\mathbb Q^{op}\longrightarrow Mnd(\mathcal S)$ such that for each $x\in\mathbb V$, $\mathscr U(x)$ is a Frobenius monad on $\mathcal S$. Then, $\mathscr U$ determines a comonad quiver on $\mathcal S$.
	
	\smallskip
	
	(5) Let $k$ be a field of characteristic zero and let $(\mathcal D,\otimes)$ be a multitensor category, i.e., a locally finite $k$-linear abelian rigid monoidal category (see \cite[$\S$ 4.1]{Et}). Consider a module category $\mathcal L$ over $\mathcal D$ (see \cite[$\S$ 7.3]{Et}). Then for any coalgebra object $C\in\mathcal D$, the functor $C\otimes-:\mathcal L\longrightarrow \mathcal L$ determines a comonad on $\mathcal L$. Let $Ind(\mathcal L)$
 denote the ind-completion of $\mathcal L$. Similar to Section 4, 
we see that $C\otimes-:\mathcal L\longrightarrow \mathcal L$ extends to a comonad $C\otimes-:Ind(\mathcal L)\longrightarrow Ind(\mathcal L)$ that is exact and preserves colimits. 
Now, let
	$\mathcal T :\mathbb Q\longrightarrow Coalg(\mathcal D)$ be a functor taking values in the category 
$Coalg(\mathcal D)$ of coalgebra objects in $(\mathcal D,\otimes)$. Then, the functor $\mathscr V:\mathbb Q\longrightarrow Cmd(Ind(\mathcal L))$, $x\mapsto \mathcal T(x)\otimes(-)$ determines comonad quiver on $Ind(\mathcal L)$. As mentioned in Section 4, there are several examples of such situations
in the literature (see \cite[$\S$ 7.4]{Et}).
	
	
	\smallskip
	
	(6) Let $\mathcal D$ be a multitensor category and $\mathcal L$ be an exact module category over $\mathcal D$ \cite[see Definition 7.5.1]{Et}. Then, the category $Fun_{\mathcal D}(\mathcal L,\mathcal L)$ of right exact $\mathcal D$-module functors is a monoidal category with the tensor product being the composition of functors (see \cite[\S 7.11]{Et}). 
	We note that by \cite[Lemma 7.11.4.]{Et}, every $F\in Fun_{\mathcal D}(\mathcal L,\mathcal L)$ has left and right adjoints. Let 
$Cmd_{\mathcal D}(\mathcal L,\mathcal L)\subseteq Fun_{\mathcal D}(\mathcal L,\mathcal L)$ be the (not necessarily full) subcategory consisting 
of $\mathcal D$-module endofunctors that are comonads on  $\mathcal L$.  As before, any such comonad on $\mathcal L$ extends to a comonad
on $Ind(\mathcal L)$. Accordingly,  any functor $\mathscr V:\mathbb Q\longrightarrow Cmd_{\mathcal D}(\mathcal L,\mathcal L)$ determines a comonad quiver on $Ind(\mathcal L)$.

	\subsection{Cis-comodules over a comonad quiver}
	
	We fix a comonad quiver $\mathscr V:\mathbb Q\longrightarrow Cmd(\mathcal C)$ over $\mathcal C$. For each $x\in \mathbb V$, we assume that the comonad
	$\mathscr V_x$ is exact and preserves colimits.
	
	\begin{defn}\label{D7.3}
 A cis-comodule $\mathscr M$ over $\mathscr V$ consists of a collection $\{\mathscr M_x \in EM^{\mathscr V_x}\}_{x\in\mathbb V}$ connected by morphisms $\mathscr M_\phi: \mathscr M_x \longrightarrow \phi_{\circ}\mathscr M_y$ in $EM^{\mathscr V_x}$ (equivalently, morphisms $\mathscr M^{\phi}:\phi^{\circ}\mathscr M_x \longrightarrow \mathscr M_y$ in $EM^{\mathscr V_y}$) for each edge $\phi:x\longrightarrow y\in\mathbb E$ such that $\mathscr M_{id_x}=id_{\mathscr M_{x}}$ for each $x\in\mathbb V$, and for any pair of composable morphisms $x\xrightarrow{\phi} y\xrightarrow{\psi} z$ in $\mathbb E$, we have $\phi_{\circ}(\mathscr M_{\psi})\circ\mathscr M_{\phi}=\mathscr M_{\psi\phi}:\mathscr M_{x}\longrightarrow \phi_{\circ}\mathscr M_{y}\longrightarrow \phi_{\circ}\psi_{\circ}\mathscr M_{z}$ (equivalently, $\mathscr M^{\psi}\circ\psi^{\circ}(\mathscr M^{\phi})=\mathscr M^{\psi\phi}$).
		
		\smallskip
		
		A morphism $\xi:\mathscr  M\longrightarrow \mathscr N$ of cis-comodules over $\mathscr V$ consists of set of morphisms $\{\xi_{x}:\mathscr M_x\longrightarrow \mathscr N_x \in EM^{\mathscr V_x}~|~x\in\mathbb Q\}$ such that for each edge $\phi:x\longrightarrow y$ in $\mathbb E$, we have $\mathscr N_{\phi}\circ \xi_x = \phi_{\circ}\xi_y\circ \mathscr M_{\phi}.$
		This forms the category of cis-comodules over $\mathscr V$, and we denote it by $Com^{cs}-\mathscr V$.
		
		\smallskip
		We will say that an object $\mathscr M\in Com^{cs}-\mathscr V$ is cartesian if for each edge $\phi:x\longrightarrow y$ in $\mathbb E,$ the connecting morphism $\mathscr M_\phi:\mathscr M_x\longrightarrow \phi_{\circ}\mathscr M_y$ is an isomorphism in $EM^{\mathscr V_x}.$ We let $Com^{cs}_{c}-\mathscr V$ denote the full subcategory of cartesian cis-comodules over $\mathscr V$.
	\end{defn}
	
	We note that $Com^{cs}-\mathscr V$ is a cocomplete abelian category, with colimits and finite limits computed pointwise at each $x\in \mathbb V$. 
	Additionally in this section,  we assume that the functor $\mathscr V:\mathbb Q\longrightarrow Cmd(\mathcal C)$ is such that for each $x\in\mathbb V$, the comonad $\mathscr V_x$ is semiperfect.
	Then, for each $x\in\mathbb V$, we fix a projective generator $G_x$ of the Grothendieck category $EM^{\mathscr V_x}$.
	
	\smallskip
For any object $\mathscr M\in Com^{cs}-\mathscr V$, we now set 
	\begin{equation}\label{eq7.4}
		el_{\mathscr V}(\mathscr M):= \underset{x\in \mathbb V}{\coprod} EM^{\mathscr V_x}(G_x,\mathscr M_x)
	\end{equation}
	From (\ref{eq7.4}), it is easy to observe that for any subobject $\mathscr M'\subseteq\mathscr M$ in $Com^{cs}-\mathscr V$, we must have $el_{\mathscr V}(\mathscr M')\subseteq el_{\mathscr V}(\mathscr M)$. The equality holds if and only if $el_{\mathscr V}(\mathscr M')= el_{\mathscr V}(\mathscr M).$ 
		In a manner similar to earlier sections, we will now show that $Com^{cs}_{c}-\mathscr V$ has a set of generators.
	\smallskip

	Let us fix an element $\eta\in el_{\mathscr V}(\mathscr M).$ Then, there exists some $x\in\mathbb V$ such that $\eta:G_x\longrightarrow \mathscr M_x$ is a morphism in $EM^{\mathscr V_x}$.
	Then, for each $y\in\mathbb V$, we define a subobject $\mathscr N_y\subseteq \mathscr M_y$ in $EM^{\mathscr V_y}$ as
	\begin{equation}\label{eq7.5}
		\mathscr N_y = Im\bigg(\underset{\psi\in \mathbb Q(x,y)}{\bigoplus}\psi^{\circ} G_x \xrightarrow{\psi^{\circ}\eta}\psi^{\circ}\mathscr M_x\xrightarrow{\mathscr M^\psi}\mathscr M_y\bigg) = \underset{\psi\in \mathbb Q(y,x)} \sum Im \left(\psi^{\circ}G_x\xrightarrow{\psi^{\circ}\eta}\psi^{\circ}\mathscr M_x\xrightarrow{\mathscr M^\psi}\mathscr M_y\right)~.
	\end{equation}
	Then, for each $\psi\in\mathbb Q(x,y)$, let $\eta_{\psi}'=\psi^{\circ}G_x\longrightarrow \mathscr N_y$ denote the morphism induced from (\ref{eq7.5}). For each 
	$y\in \mathbb V$, we have an inclusion $\iota_y:\mathscr N_y\longrightarrow \mathscr M_y$.  By \eqref{eq7.5}, it is clear that $\eta\in EM^{\mathscr V_x}(G_x,\mathscr N_x)$.
	
	\begin{lem}\label{L7.5}
		The objects $\{\mathscr N_y\in EM^{\mathscr V_y}\}_{y\in\mathbb V}$ together determine a subobject $\mathscr N\subseteq\mathscr M$ in $Com^{cs}-\mathscr V.$
	\end{lem}
	\begin{proof}
		We consider an edge $\phi:y\longrightarrow z$ in $\mathbb E$. Since, $\phi_{\circ}$ is a right adjoint, $\phi_{\circ}(\iota_z)$ is  a monomorphism in $EM^{\mathscr V_y}.$  We will  show that the morphism $\mathscr M_{\phi}:\mathscr M_y\longrightarrow \phi_{\circ}\mathscr M_z$ restricts to the morphism $\mathscr N_{\phi}:\mathscr N_y\longrightarrow \phi_{\circ}\mathscr N_z$ such that $\mathscr M_\phi\circ \iota_y=\phi_{\circ}(\iota_z)\circ \mathscr N_\phi$. 
		As in previous sections, it suffices to show that for any morphism $\xi: G_y\longrightarrow \mathscr N_y$, there exists a morphism $\xi':G_y\longrightarrow \phi_{\circ}\mathscr N_z$ such that $\phi_{\circ}(\iota_z)\circ\xi'=\mathscr M_{\phi}\circ \iota_y\circ\xi.$ From (\ref{eq7.5}), there exists an epimorphism 
$
			\underset{\psi\in \mathbb Q(x,y)}{\bigoplus}\eta_{\psi}':\underset{\psi\in \mathbb Q(x,y)}{\bigoplus}\psi^{\circ} G_x\longrightarrow \mathscr N_y\notag
	$
		in $EM^{\mathscr V_y}.$ Since $G_y$ is projective in $EM^{\mathscr V_y},$ the morphism $\xi$ can be lifted to a morphism $\xi'':G_y\longrightarrow \underset{\psi\in \mathbb Q(x,y)}{\bigoplus}\psi^{\circ} G_x $ such that $
			\xi=\bigg(\underset{\psi\in \mathbb Q(x,y)}{\bigoplus}\eta_{\psi}'\bigg)\circ\xi''
$.   Since $(\phi^{\circ},\phi_{\circ})$ is an adjoint pair, it follows as in the proof of  Proposition \ref{P4.5} that 
		the  composition $\mathscr M_{\phi}\circ \iota_y\circ \bigg(\underset{\psi\in \mathbb Q(x,y)}{\bigoplus}\eta_{\psi}'\bigg)$ factors through $\phi_{\circ}(\iota_z):\phi_{\circ}\mathscr N_{z}\longrightarrow \phi_{\circ}\mathscr M_{z}$ as
		$
			\mathscr M_{\phi}\circ \iota_y\circ \bigg(\underset{\psi\in \mathbb Q(x,y)}{\bigoplus}\eta_{\psi}'\bigg) = \phi_{\circ}(\iota_z)\circ \tau
		$.  Composing this with $\xi''$, we obtain 
		\begin{equation}
			\mathscr M_{\phi}\circ \iota_y\circ \bigg(\underset{\psi\in \mathbb Q(x,y)}{\bigoplus}\eta_{\psi}'\bigg) \circ \xi''= \mathscr M_{\phi}\circ \iota_y\circ \xi = \phi_{\circ}(\iota_z)\circ \tau \circ \xi''.\notag
		\end{equation}
		We can now set $\xi' = \tau \circ \xi'' : G_y\longrightarrow \phi_{\circ}\mathscr N_z.$ This completes the proof.
	\end{proof}
	 
	Since $G$ is a generator for $\mathcal C$, for each $x\in\mathbb V$, there exists an indexing set $I_x$ such that $G^{(I_x)}\longrightarrow G_x$ is an epimorphism in $\mathcal C$. 
	Accordingly, we may choose an indexing set $I$ that is large enough so that we have epimorphisms 
	$G^{(I)}\longrightarrow G_x$ for each $x\in \mathbb V$. We note that $G^{(I)}$ is also a projective generator of $\mathcal C$ and we set
	$G':=G^{(I)}$.   
	We now fix an infinite regular cardinal  $\delta$ such that 
	\begin{equation}\label{eq7.9}
		\delta\geq sup\{ |Mor(\mathbb Q)|, \kappa(G'), ||G_x||^{G'}, x\in\mathbb V\} 
	\end{equation} where $||M||^{G'}:=|\mathcal C(G',M)|=|\mathcal C(G^{(I)},M)|$ for any object $M\in \mathcal C$.
	\begin{lem}\label{L7.6}
		We have $|el_{\mathscr V}(\mathscr N)|\leq \delta^{\kappa(G')}.$
	\end{lem}
	\begin{proof} 
		From (\ref{eq7.5}), $\mathscr N_y$ is a quotient of $\underset{\psi\in \mathbb Q(x,y)}{\bigoplus}\psi^{\circ}G_x$ in 
		$EM^{\mathscr V_{y}}$.  We know that $G_y$ is projective in $EM^{\mathscr V_{y}}$.  Accordingly, we have
		\begin{equation}\label{7.5vt}
			\vert EM^{\mathscr V_{y}}(G_y,\mathscr N_y)| \leq \left|EM^{\mathscr V_{y}}\left(G_y,\underset{\psi\in \mathbb Q(x,y)}{\bigoplus}\psi^{\circ}G_x\right)\right|
		\end{equation}
		We note that if $(N,f^N)$ is an object in $EM^{\mathscr V_{y}}$, the structure map $f^N: N\longrightarrow \mathscr V_y N$ is a monomorphism in 
		$\mathcal C$ and hence in $EM^{\mathscr V_{y}}$. From \eqref{7.5vt}, it now follows that 
		\begin{equation}\label{7.6tc}
			\vert EM^{\mathscr V_{y}}(G_y,\mathscr N_y)| \leq \left|EM^{\mathscr V_{y}}\left(G_y,\mathscr V_{y}(\underset{\psi\in \mathbb Q(x,y)}{\bigoplus}\psi^{\circ}G_x)\right)\right|= \left|\mathcal C\left(G_y, \underset{\psi\in \mathbb Q(x,y)}{\bigoplus}\psi^{\circ}G_x\right)\right|
		\end{equation}
		We note that  $\psi^{\circ}(G_x)=G_x$ as an object of $\mathcal C$ for every $\psi\in\mathbb Q(x,y)$. Applying Lemma 
		\ref{L2.1} with the generator $G'=G^{(I)}$, we now have
		%
		\begin{equation}
			\left|\mathcal C\left(G_y, \underset{\psi\in \mathbb Q(x,y)}{\bigoplus}\psi^{\circ}G_x\right)\right| \leq \left|\mathcal C\left(G^{(I_y)},\underset{\psi\in \mathbb Q(x,y)}{\bigoplus}G_x\right)\right| \leq \left|\mathcal C\left(G^{(I)},\underset{\psi\in \mathbb Q(x,y)}{\bigoplus}G_x\right)\right|\leq \delta^{\kappa(G')} 
		\end{equation}	
		From \eqref{eq7.4}, it is now clear that $|el_{\mathscr V}(\mathscr N)|\leq \delta^{\kappa(G')}.$
	\end{proof}
	
	\begin{Thm}\label{T7.7}
$Com^{cs}-\mathscr V$ is a Grothendieck category.    
	\end{Thm}
	\begin{proof}
		We already know that $Com^{cs}-\mathscr V$ is a cocomplete abelian category where filtered colimits are exact.
		Now, let $\mathscr M\in Com^{cs}-\mathscr V$ and $\eta\in el_{\mathscr V}(\mathscr M).$ By Lemma \ref{L7.5}and Lemma \ref{L7.6}, there exists a subobject $\mathscr N\subseteq \mathscr M$ such that $\eta\in el_{\mathscr V}(\mathscr N)$ and $|el_{\mathscr V}(\mathscr N)|\leq \delta^{\kappa(G')}.$ Since $G_x$ is a generator for $EM^{\mathscr V_x},$ then for any object $\mathscr P\in Com^{cs}-\mathscr V$, $\mathscr P_x$  is an epimorphic image of $G_x^{(EM^{\mathscr V_x}(G_x, \mathscr P_x))}.$  As $EM^{\mathscr V_x}$ is a Grothendieck category, it is well-powered. Therefore, the isomorphism classes of $\mathscr V$-comodules $\mathscr P$ with $|el_{\mathscr V}(\mathscr P)|\leq \delta^{\kappa(G')}$ give a set of generators for $Com^{cs}-\mathscr V.$
	\end{proof}
	
	In the remaining part of this section, we will assume that $\mathbb Q$ is a poset. We will now study projective generators in $Com^{cs}-\mathscr V$.

	\begin{lem} \label{L8.1}Let $x\in\mathbb V.$ Then,
		
		\smallskip
		
		(1)	There exists an extension functor $ex_x^{cs}: EM^{\mathscr V_x} \longrightarrow Com^{cs}-\mathscr V$ as 
		\begin{equation}
			ex_x^{cs}(M)_{y}=\left\{\begin{array}{ll} \psi^{\circ} M & \mbox{if $\psi\in\mathbb Q(x,y)$} \\
				0 & \mbox{$otherwise$}\\
			\end{array}\right.
		\end{equation}

	for each $y\in\mathbb V$ and $M\in EM^{\mathscr V_x}$. 
	
	\smallskip
	
	(2) The evaluation functor $ev_x^{cs}: Com^{cs}-\mathscr V\longrightarrow EM^{\mathscr V_x}$, defined as $ev_x^{cs}(\mathscr M)=\mathscr M_x$ is exact.
	
	\smallskip
	
	(3) $(ex_x^{cs},ev_x^{cs})$ forms a pair of adjoint functors, i.e., for any  $\mathscr M\in Com^{cs}-\mathscr V$ and $N\in EM^{\mathscr V_x},$ we have 
	\begin{equation*}Com^{cs}-\mathscr V(ex_x^{cs}(N),\mathscr M)\cong EM^{\mathscr V_x}(N,ev_x^{cs}(\mathscr M)).
	\end{equation*}
	
	\smallskip
	(4) The evaluation functor $ev_x^{cs}: Com^{cs}-\mathscr V\longrightarrow EM^{\mathscr V_x}$ also has a right adjoint given by 
	\begin{equation}
		coe_x^{cs}(M)_{y}=\left\{\begin{array}{ll} \psi_{\circ} M & \mbox{if $\psi\in\mathbb Q(y,x)$} \\
			0 & \mbox{$otherwise$}.\\
		\end{array}\right.
	\end{equation}
	
	\smallskip
	(5) For each $x\in \mathbb V$, the functor  $ex_x^{cs}: EM^{\mathscr V_x} \longrightarrow Com^{cs}-\mathscr V$ preserves projectives.
		\end{lem}
	\begin{proof} This is proved in a manner similar to Proposition \ref{P5.1}, Proposition \ref{P5.2} and Corollary \ref{C5.3}. 
	\end{proof}
	
	\begin{Thm}\label{T8.3}
		The category $Com^{cs}-\mathscr V$ has a set of projective generators.    
	\end{Thm}
	\begin{proof}
		We consider the family $\mathcal F=\{ex_x^{cs}(G_x)~|~x\in\mathbb V, G_x\in EM^{\mathscr V_x}\}$, where $G_x$ is the projective generator in $EM^{\mathscr V_x}$ as taken above.  By Lemma \ref{L8.1}, $ex_x^{cs}(G_x)$ is a projective object in $Com^{cs}-\mathscr V.$ As in the proof of Theorem \ref{Th5.4}, we can now show that 
		$\mathcal F$ is a set of generators for  $Com^{cs}-\mathscr V.$
			\end{proof}

	\subsection{Cartesian comodules over a comonad quiver}
	
	We continue with the setup of Section 7.1. Our next aim is to study conditions for  the category of cartesian cis-comodules to form a Grothendieck category. 
	 Let $\phi: (V,\delta,\epsilon) \longrightarrow (V',\delta',\epsilon')$ be a morphism of comonads over $\mathcal C$. Suppose that $V$ and $V'$ are exact and preserve colimits. Then, it is clear from the definitions that the induced functor $\phi^{\circ}:EM^V\longrightarrow EM^{V'}$ is exact. 
	\begin{defn}\label{D9.1} 
Let $\phi: (V,\delta,\epsilon) \longrightarrow (V',\delta',\epsilon')$ be a morphism of comonads over $\mathcal C$. We will say that $\phi$ is coflat if $\phi_{\circ}:EM^{V'}\longrightarrow EM^{V}$ is exact. A comonad quiver $\mathscr V:\mathbb Q\longrightarrow Cmd(\mathcal C)$ is  coflat if for each edge $\phi:x\longrightarrow y$ in $\mathbb Q$, the induced morphism $\mathscr V(\phi):\mathscr V_x\longrightarrow \mathscr V_y$ of comonads is coflat. 
	\end{defn}

	Let $\phi:(V,\delta,\epsilon) \longrightarrow (V',\delta',\epsilon')$  be a coflat morphism of comonads over $\mathcal C.$  By assumption, $\phi_\circ$ is exact. In particular, $\phi_\circ$ preserves finite colimits. Additionally, since $V$ and $V'$ are exact and preserve colimits, it is clear from the definition in \eqref{eq7.1} that $\phi_\circ$ preserves filtered colimits. Since every colimit can be expressed as a combination of filtered colimits and finite colimits, it follows that $\phi_\circ$ preserves all colimits.

	\begin{lem}\label{L9.3}
		Let $\phi: V\longrightarrow V'$ be a coflat morphism in $Cmd(\mathcal C)$. Suppose that 
		
		\smallskip
		(a) The comonads $V$ and $V'$ are semiperfect. Let $G_V$ and $G_{V'}$ be projective generators for $EM^{V}$ and $EM^{V'}$ respectively and let 
		$I$ be an indexing set such that we have epimorphisms in $\mathcal C$
		\begin{equation}
		G''=G^{(I)}\longrightarrow G_V \qquad G''=G^{(I)}\longrightarrow G_{V'}
		\end{equation} (b) We have  an infinite regular cardinal  
		$ \delta'\geq  \text{max}\{|Mor(\mathbb Q)|, \kappa(G''), ||G_V||^{G''}, ||G_{V'}||^{G''},\kappa(G_V), \kappa(G_{V'})\}
		$ and $\beta\geq \delta'$.

		\smallskip
		Let $(M',f^{M'})\in EM^{V'}$ and let $X\subseteq EM^{V}(G_V, \phi_{\circ}M')$ be such that $|X|\leq \beta.$ Then, there exists a subobject $N'\subseteq M'$ in $EM^{V'}$ such that $X\subseteq EM^{V}(G_V, \phi_{\circ}N')$,  $|EM^{V'}(G_{V'}, N')|\leq {\beta}^{\delta'}$ and $|\mathcal C(G'',N')|\leq \beta^{\delta'}$. 
	\end{lem}
	\begin{proof}
		Let $f\in X\subseteq EM^{V}(G_V, \phi_{\circ}M')$. Since $G_{V'}$ is a generator in $EM^{V'},$ there exists an epimorphism $g:(G_{V'})^{(J)}\longrightarrow M'$ in $EM^{V'}$ for some indexing set $J$. As $\phi_{\circ}$  preserves colimits, the induced morphism $\phi_{\circ}(g):\phi_{\circ}((G_{V'})^{(J)})=(\phi_{\circ}G_{V'})^{(J)}\longrightarrow \phi_{\circ}M'$ is an epimorphism in $EM^{V}.$  Since $G_{V}$ is  projective in $EM^V$, $f:G_{V}\longrightarrow \phi_{\circ}M'$ factors through a morphism $\eta:G_{V}\longrightarrow (\phi_{\circ}G_{V'})^{(J)}$ in $EM^{V}.$ Further, as  $\delta'\geq \kappa(G_V)$, $G_{V}$ is $\delta'$-presentable. Therefore, there exists a subset $J_f\subseteq J$ with $|J_f|<\delta'$ such that the following diagram commutes in $EM^V$:
		\begin{equation}\label{eq9.2}
			\begin{tikzcd}
				(\phi_{\circ}G_{V'})^{(J_f)} \arrow{dd}{\iota} & & G_{V}\arrow{dd}{f}\arrow{ll}{}\arrow{ddll}{\eta} \\
				& &\\
				(\phi_{\circ}G_{V'})^{(J)} \arrow{rr}{\phi_{\circ}(g)} & & \phi_{\circ}M' ~. \\
			\end{tikzcd}
		\end{equation}
		From \eqref{eq9.2}, we have a morphism $\eta_{f}':(G_{V'})^{(J_f)}\hookrightarrow (G_{V'})^{(J)}\overset{g}{\longrightarrow} M'$ in 
		$EM^{V'}$ such that   $f$ factors through $\phi_{\circ}(\eta_{f}').$  We now define a subobject $N'\subseteq M'$ as 
		\begin{equation}\label{eq9.3}
			N':=Im\left(\eta':= \underset{f\in X}{\bigoplus} \eta_{f}':\underset{f\in X}{\bigoplus} (G_{V'})^{(J_f)}\longrightarrow M'\right) ~.
		\end{equation} Since $G''$ is projective in $\mathcal C$, we use Lemma \ref{L2.1} to obtain
		\begin{equation}\label{7166t}
		|\mathcal C(G'',N')|\leq |\mathcal C(G'',\underset{f\in X}{\bigoplus} (G_{V'})^{(J_f)})|\leq ( \beta^{\delta'})^{\kappa(G'')}= \beta^{\delta'}
		\end{equation} 
		Since $\phi_{\circ}$ is exact and preserves colimits, we get 
		\begin{equation}\label{eq9.4}
			\phi_{\circ}N':= Im\left(\phi_{\circ}(\eta'):= \underset{f\in X}{\bigoplus} \phi_{\circ}(\eta_{f}'):\underset{f\in X}{\bigoplus} \phi_{\circ}((G_{V'})^{(J_f)})\longrightarrow \phi_{\circ}M' \right)~.
		\end{equation}
		From $(\ref{eq9.2})$ and $(\ref{eq9.4})$, it is clear that $X\subseteq  EM^{V}(G_V, \phi_{\circ}N')$. As in the proof of Lemma \ref{L7.6}, we note that
		the structure map $N'\longrightarrow V'N'$ is a monomorphism in $EM^{V'}$.  Using the epimorphism $G''\longrightarrow G_{V'}$, we now have  
		\begin{equation}\label{716t}
		|EM^{V'}(G_{V'}, N')| \leq |EM^{V'}(G_{V'}, V'N')| = |\mathcal C(G_{V'},N')|\leq |\mathcal C(G'',N')|\leq \beta^{\delta'}
		\end{equation} where the last inequality follows from \eqref{7166t}. 
		
	\end{proof}
	
	For the next result, we maintain the notation and setup from Lemma \ref{L9.3}.

	\begin{lem}\label{L9.4}
		For $(M',f^{M'})\in EM^{V'}$, let $X\subseteq EM^{V'}(G_{V'}, M')$, $Y\subseteq EM^{V}(G_{V}, \phi_{\circ}M')$ be such that $|X|\leq \beta^{\delta'}$ and $|Y|\leq \beta^{\delta'}.$ Then, there exists a subobject $N'\subseteq M'$ in $EM^{V'}$ such that 
		
		\smallskip
		
		(1) $X\subseteq EM^{V'}(G_{V'}, N')$ and $Y\subseteq EM^{V}(G_{V}, \phi_{\circ}N'),$
		
		\smallskip
		
		(2) $|EM^{V'}(G_{V'}, N')|\leq \beta^{\delta'}$ and $|EM^{V}(G_{V}, \phi_{\circ}N')|\leq \beta^{\delta'}.$
		
	\end{lem}
	\begin{proof}
		From Lemma \ref{L9.3}, there exists a subobject $N'_1 \subseteq M'$ such that $|EM^{V'}(G_{V'}, N'_{1})|\leq (\beta^{\delta'})^{\delta'}=\beta^{\delta'}$, 
		$|\mathcal C(G'',N_1)|\leq \beta^{\delta'}$ and $Y\subseteq EM^{V}(G_{V}, \phi_{\circ}N'_{1}).$ Now, taking $\phi=id$ and using Lemma \ref{L9.3} again we obtain a subobject $N'_2\subseteq M'$ such that $|EM^{V'}(G_{V'}, N'_{2})|\leq \beta^{\delta'}$, $|\mathcal C(G'',N_2)|\leq \beta^{\delta'}$ and $X\subseteq EM^{V'}(G_{V'}, N'_{2}).$ 
		Now, set $N'=N'_1+N'_2.$ Then, it is clear that $X\subseteq EM^{V'}(G_{V'}, N'_{2})\subseteq EM^{V'}(G_{V'}, N'),$ and as $\phi_{\circ}$ is exact, $Y\subseteq EM^{V}(G_{V}, \phi_{\circ}N'_{1})\subseteq EM^{V}(G_{V}, \phi_{\circ}N')$. By definition, we have an epimorphism $N'_1\oplus N'_2\longrightarrow N'$  in 
		$EM^{V'}$. Since $G_{V'}$ is projective in $EM^{V'}$, we now have 
			\begin{equation}\label{eq9.7}
				|EM^{V'}(G_{V'}, N')|\leq |EM^{V'}(G_{V'}, N'_1\oplus N'_2)| \leq  \beta^{\delta'}
			\end{equation}
			From (\ref{eq7.1}), we know that $\phi_\circ(N')\subseteq VN'$. Since $G_V$ is projective and we have an epimorphism $G''\longrightarrow G_V$, we now obtain
			\begin{equation}
				|EM^{V}(G_{V}, \phi_{\circ}N')|\leq |EM^{V}(G_{V}, VN')|\leq |EM^{V}(G_{V}, V(N_1'\oplus N_2')|= |\mathcal C(G_V, N'_1\oplus N'_2)|
				\leq |\mathcal C(G'',N_1'\oplus N'_2)|\leq \beta^{\delta'}
			\end{equation}
		\end{proof}

		We now study generators for $Com^{cs}_{c}-\mathscr V$. We note that colimits and finite limits in $Com^{cs}_{c}-\mathscr V$ are computed pointwise in 
		$Com^{cs}-\mathscr V$.  As in Section 7.1, for each $x\in \mathbb V$, we fix a projective generator $G_x$ for $EM^{\mathscr V_x}$. Since $G$ is a generator for $\mathcal C$, we can choose an indexing set $I$ that is large enough such that we have epimorphisms $G^{(I)}\longrightarrow G_x$  for each $x\in\mathbb V$. We set $G':=G^{(I)}$.  
		We now fix an infinite cardinal $\delta$ such that
		\begin{equation}\label{eq9.8}
			\delta\geq sup\{|Mor(\mathbb Q)|, \kappa(G'),  \kappa(G_x), ||G_x||^{G'}, \tau(G_x), x\in\mathbb V\}\
		\end{equation} where $G_x$ is $\tau(G_x)$-representable in the Grothendieck category $EM^{\mathscr V_x}$. We also choose $\beta\geq \delta$.

\smallskip
	
We now consider an object $\mathscr M\in Com^{cs}_{c}-\mathscr V$ and an element $\eta\in el_{\mathscr V}(\mathscr M)$ as in \eqref{eq7.4}. By definition, there is some $x\in\mathbb V$ such that $\eta:G_{x}\longrightarrow \mathscr M_x$ is a morphism in $EM^{\mathscr V_x}.$ By Lemma \ref{L7.5} and Lemma \ref{L7.6}, we can choose a subobject $\mathscr N\subseteq \mathscr M$ such that $\eta\in el_{\mathscr V}(\mathscr N)$ and  $|el_{\mathscr V}(\mathscr N)|\leq \delta^{\kappa(G')}\leq \beta^\delta$. Since $Mor(\mathbb Q)$ is well-ordered, we can take a lexicographic ordering on $\mathbb N\times Mor(\mathbb Q).$ As in previous sections, we construct a family of subobjects $\{\mathscr P(n,\phi):(n,\phi)\in \mathbb N \times Mor(\mathbb Q)\}$ of $\mathscr M$ in $Com^{cs}-\mathscr V$ that satisfies the following conditions
		
		\smallskip
		
		(1)\label{C6.1.1} If $\phi_0$ is the least element of $Mor(\mathbb Q)$, then $\eta\in el_{\mathscr V}(\mathscr P(1,\phi_0))$.
		
		\smallskip
		
		(2) If $(n,\phi)\leq (m,\psi)$ in $\mathbb N \times Mor(\mathbb Q)$, then $\mathscr P(n,\phi) \subseteq \mathscr P(m,\psi).$
		
		\smallskip
		
		(3) For given $(n,\phi:y \longrightarrow z)$ in $\mathbb N \times Mor(\mathbb Q)$, the morphism $\mathscr P(n,\phi)_{\phi}:\mathscr P(n,\phi)_y \longrightarrow \mathscr \phi_{\circ}\mathscr P(n,\phi)_z$ is an isomorphism in $EM^{\mathscr V_y}$.
		
		\smallskip
		
		(4) \label{C6.1.4}$|el_{\mathscr V}(\mathscr P(n,\phi))|\leq \beta^\delta$.
		
		\smallskip
		We fix $(n,\phi:y\longrightarrow z)\in \mathbb N\times Mor(\mathbb Q).$  As in previous sections, we begin the transfinite induction by setting  for each vertex $w\in\mathbb V,$ a subset $X_0^0(w)\subseteq EM^{\mathscr V_w}(G_w, \mathscr M_w)$ as
		\begin{equation*}
			X_0^0(w):= 
			\begin{cases} 
				EM^{\mathscr V_w}(G_w, \mathscr N_w),\quad & \text{if}~~(n,\phi) = (1,\phi_0) \\
				\underset{(m,\psi)< (n,\phi)}\bigcup EM^{\mathscr V_w}(G_w, \mathscr P(m,\psi)_w), \quad &~~ \mbox{otherwise}~.
			\end{cases}
		\end{equation*} We note that each $X_0^0(w)\subseteq EM^{\mathscr V_w}(G_w,\mathscr M_w)$ and $|X_0^0(w)|\leq 
\beta^\delta$. 
		Since $\mathscr M$ is cartesian, $X_0^0(z)\subseteq EM^{\mathscr V_z}(G_z, \mathscr M_z)$ and $X_0^0(y)\subseteq EM^{\mathscr V_y}(G_y, \mathscr M_y)= EM^{\mathscr V_y}(G_y, \phi_{\circ}\mathscr M_z)$. Using Lemma \ref{L9.4} there exists a subobject $X_{1}^0(z)\subseteq \mathscr M_z$ 
in $EM^{\mathscr V_z}$ such that 
		\begin{equation*}
			|EM^{\mathscr V_z}(G_z, X_1^0(z))|\leq  \beta^\delta\quad |EM^{\mathscr V_y}(G_y, \phi_{\circ}(X_1^0(z)))|\leq  \beta^\delta\quad X^0_0(z) \subseteq EM^{\mathscr V_z}(G_z, X_1^0(z))\quad X_0^0(y) \subseteq EM^{\mathscr V_y}(G_y, \phi_{\circ}(X_1^0(z))).
		\end{equation*} 
		We set $X_1^0(y) := \phi_{\circ}X_1^0(z)$.  For each $w\in \mathbb V$, we define
		\begin{equation*}
			Y_1^0(w):=\left\{\begin{array}{ll} EM^{\mathscr V_w}(G_w, (X_1^0(w)))  & \mbox{if $w=y,z$} \\
				X^0_0(w) & \mbox{otherwise}.\\
			\end{array}\right.
		\end{equation*}
		Then $X_{0}^{0}(w)\subseteq Y_{1}^{0}(w)$ and $|Y_{1}^{0}(w)|\leq \beta^\delta.$
		
		\smallskip
		\begin{lem}\label{L9.5}
			
			Let $X\subseteq el_{\mathscr V}(\mathscr M)$ such that $|X|\leq  \beta^\delta.$ Then there exists a subobject $\mathscr Q\subseteq \mathscr M$ in $Com^{cs}-\mathscr V$ such that $X\subseteq el_{\mathscr V}(\mathscr Q)$ and $|el_{\mathscr V}(\mathscr Q)|\leq  \beta^\delta.$
		\end{lem}
		\begin{proof}
			Let $\eta\in X\subseteq el_{\mathscr V}(\mathscr M).$ Then, by Lemma \ref{L7.5} and Lemma \ref{L7.6}, there exists a subobject $\mathscr Q_{\eta}\subseteq \mathscr M$ with $\eta\in el_{\mathscr V}(\mathscr Q_{\eta})$ and $|el_{\mathscr V}(\mathscr Q_{\eta})|=\underset{x\in\mathbb V}\sum|EM^{\mathscr V_x}(G_x,\mathscr Q_{\eta_x})|\leq  \beta^\delta.$ We set $\mathscr Q:=\underset{\eta\in X}\sum\mathscr Q_\eta$. Since $\mathscr Q$ is a quotient of $\underset{\eta\in X}\bigoplus\mathscr Q_\eta$, it follows by applying Lemma \ref{L2.1} to the Grothendieck category $EM^{\mathscr V_x}$ that 
			\begin{equation*}
				|EM^{\mathscr V_x}(G_x,\mathscr Q_x)|\leq |EM^{\mathscr V_x}(G_x,\underset{\eta\in X}\bigoplus\mathscr Q_{\eta_x})|\leq  ( \beta^\delta)^{\tau(G_{x})}\leq  \beta^\delta
			\end{equation*}
			From  \eqref{eq7.4}, it now follows that $|el_{\mathscr V}(\mathscr Q)|\leq \beta^\delta.$
		\end{proof}
		
		Since $\underset{w\in\mathbb V}\bigcup Y_1^0(w)\subseteq el_{\mathscr V}(\mathscr M)$ and $|\underset{w\in\mathbb V}\bigcup Y_1^0(w)|\leq  \beta^\delta,$ by Lemma \ref{L9.5}, there exists a subobject $\mathscr Q^{0}(n,\phi)\subseteq \mathscr M$ such that $\underset{w\in\mathbb V}\bigcup Y_1^0(w)\subseteq el_{\mathscr V}(\mathscr Q^{0}(n,\phi))$, i.e., for each $w\in\mathbb V$, $Y_1^0(w)\subseteq EM^{\mathscr V_w}(G_w,\mathscr Q^0(n,\phi)_w)$), and $|el_{\mathscr V}(\mathscr Q^{0}(n,\phi))|\leq  \beta^\delta.$ 
		
		\smallskip
		
		Now, we assume by induction that for every $s\leq t,$ we have obtained a subobject $\mathscr Q^{s}(n,\phi)\subseteq \mathscr M$ in $Com^{cs}-\mathscr V$ that satisfies $\underset{w\in\mathbb V}\bigcup Y_1^s(w)\subseteq el_{\mathscr V}(\mathscr Q^{s}(n,\phi))$ and $|el_{\mathscr V}(\mathscr Q^{s}(n,\phi))|\leq \beta^\delta.$ Then, for each $w\in\mathbb V,$   we set $$X_{0}^{t+1}(W):=EM^{\mathscr V_w}(G_w,\mathscr Q^{t}(n,\phi)_w).$$ 
		By Lemma \ref{L9.4} , there exists a subobject $X_{1}^{t+1}(z)\subseteq\mathscr M_z$ in $EM^{\mathscr V_z}$ such that 
		\begin{equation*}
			|EM^{\mathscr V_z}(G_z, (X_1^{t+1}(z)))|\leq \beta^\delta\quad |EM^{\mathscr V_y}(G_y, \phi_{\circ}(X_1^{t+1}(z)))|\leq \beta^\delta\quad  X^{t+1}_0(z) \subseteq EM^{\mathscr V_z}(G_z, (X_1^{t+1}(z)))\quad X_0^{t+1}(y) \subseteq EM^{\mathscr V_y}(G_y, \phi_{\circ}(X_1^{t+1}(z))).
		\end{equation*}
		Then, we set $X_{1}^{t+1}(y):=\phi_{\circ}X_{1}^{t+1}(z)$ and for each $w\in\mathbb V$
		\begin{equation*} 
			Y_1^{t+1}(w):=\left\{\begin{array}{ll} EM^{\mathscr V_w}(G_w, (X_1^{t+1}(w)))  & \mbox{if $w=y,z$} \\
				X_0^{t+1}(w) & \mbox{otherwise}\\
			\end{array}\right.
		\end{equation*}
		Then, each $X_{0}^{t+1}(w)\subseteq Y_{1}^{t+1}(w)$ and $|Y_{1}^{t+1}(w)|\leq \beta^\delta.$ Using Lemma \ref{L9.5}, we get a subobject $\mathscr Q^{t+1}(n,\phi)\subseteq \mathscr M$  in $Com^{cs}-\mathscr V$ such that 
		$\underset{w\in\mathbb V}\bigcup Y_1^{t+1}(w)\subseteq el_{\mathscr V}(\mathscr Q^{t+1}(n,\phi)),$ and $|el_{\mathscr V}(\mathscr Q^{t+1}(n,\phi))|\leq  \beta^\delta.$ In particular, $Y_1^{t+1}(w)\subseteq EM^{\mathscr V_w}(G_w,\mathscr Q^{t+1}(n,\phi)_w)$ for each $w\in\mathbb V$. 
		We have now constructed subobjects of  $\mathscr M$ in $Com^{cs}-\mathscr V$ such that 
		$
			\mathscr Q^{0}(n,\phi)\hookrightarrow\mathscr Q^{1}(n,\phi)\hookrightarrow\ldots\hookrightarrow\mathscr Q^{t}(n,\phi)\hookrightarrow\ldots~$ and 
		we set 
		\begin{equation}\label{eq9.9}
			\mathscr P(n,\phi):= \underset{t\geq 0}\bigcup \mathscr Q^{t}(n,\phi).
		\end{equation} As in Section 6.1, we see that the  family $\{\mathscr P(n,\phi)~|~(n,\phi)\in\mathbb N\times Mor(\mathbb Q)\}$  of subobjects of $\mathscr M$ in $Com^{cs}-\mathscr V$ satisfies the conditions (1)-(4).
	
		\begin{lem}\label{L9.7}
			Let $\mathscr M\in Com^{cs}_{c}-\mathscr V$ and $\eta\in el_{\mathscr V}(\mathscr M).$ Then there exists a subobject $\mathscr P\subseteq \mathscr M$ in $Com^{cs}_{c}-\mathscr V$ with $\eta\in el_{\mathscr V}(\mathscr P)$ such that $|el_{\mathscr V}(\mathscr P)|\leq \beta^\delta$.  
		\end{lem}
		\begin{proof} This is proved in a manner similar to Lemma \ref{L6.4}.
			
		\end{proof}
		\begin{Thm}\label{T5.9} $Com^{cs}_{c}-\mathscr V$ is a Grothendieck category.  
		\end{Thm}
		\begin{proof}
			It is already clear that $Com^{cs}_{c}-\mathscr V$ is a cocomplete abelian category where filtered colimits are exact. Now consider an element $\eta\in el_{\mathscr V}(\mathscr M).$  By Lemma \ref{L9.7}, there exists a subobject $\mathscr P_{\eta}\subseteq \mathscr M$ in $Com^{cs}_{c}-\mathscr V$ such that $\eta\in el_{\mathscr V}(\mathscr P_{\eta})$ and $|el_{\mathscr V}(\mathscr P_{\eta})|\leq\beta^\delta.$ Then, the isomorphism classes of cartesian comodules $\mathscr P$ satisfying $|el_{\mathscr V}(\mathscr P)|\leq\beta^\delta$ form a set of generators for $Com^{cs}_{c}-\mathscr V$. Hence, $Com^{cs}_{c}-\mathscr V$ is a Grothendieck category.
		\end{proof}
		\begin{Thm}\label{T5.10}
			The inclusion functor $\iota: Com^{cs}_{c}-\mathscr V \longrightarrow Com^{cs}-\mathscr V$ has a right adjoint.
		\end{Thm}
		\begin{proof}
			Since the inclusion functor $\iota: Com^{cs}_{c}-\mathscr V \longrightarrow Com^{cs}-\mathscr V$ preserves colimits and $Com^{cs}_{c}-\mathscr V$ is a Grothendieck category, it  follows from \cite[Proposition 8.3.27]{KS} that $\iota$ has a right adjoint. 
		\end{proof}
		
		\section{Rational pairing of a monad and a comonad quiver}
			We continue with $\mathcal C$ being a Grothendieck category with a projective generator $G$. Let $U=(U, \theta, \eta)$ be a monad and $V=(V,\delta, \epsilon)$ be a comonad on $\mathcal{C}$. We recall the following notion. 
		\begin{defn} (see \cite[\S 3.2]{MW})  A pairing $\mathcal P=(U,V,\varrho)$ between $U$ and $V$ is a natural transformation $\varrho:UV\longrightarrow I$ such that for each $M,N\in \mathcal C$, we have the following commutative diagram 
		\[
		\begin{tikzcd}
			& \mathcal{C}(M, N) \\
			\mathcal{C}(M, VN) \arrow{ur}{\mathcal{C}(M, \epsilon_{N})} \arrow{rr}{\beta_{M, N}^{\mathcal{P}}} \arrow{d}{\mathcal C(M, \delta_{N})} & & \mathcal{C}(UM, N) \arrow{drr}{\mathcal C(\theta_{M}, N)} \arrow{ul}[swap, pos=0.5]{\mathcal{C}(\eta_M, N)}\\
			\mathcal{C}(M, V^2N) \arrow{rr}{\beta_{M, VN}^{\mathcal{P}}} & & \mathcal{C}(UM, VN) \arrow{rr}{\beta_{UM, N}^{\mathcal{P}}} & & \mathcal{C}(U^2M, N)
		\end{tikzcd}
		\] 
			where $\beta_{M,N}^{\mathcal P}:\mathcal{C}(M,VN)\longrightarrow  \mathcal C(UM,N),~ f\mapsto \varrho_{N}\circ Uf.$ The pairing is said to be rational if $\beta_{M,N}^{\mathcal P}$ is injective for each $M,N\in \mathcal C.$
		\end{defn}
		
		\smallskip
		
		Given a pairing $\mathcal P=(U, V,\varrho)$, we note that any object $(N,f^N)\in EM^V$ may be treated as an object $(N, \varrho_{N}\circ U(f^N))\in EM_{U}.$ This gives a functor $\Phi_{\mathcal P}: EM^V\longrightarrow EM_U$.  We suppose that the pairing $\mathcal P=(U, V,\varrho)$ is rational and the monad $U$ has a right adjoint, say $V'$. In that case, the  functor $\Phi_{\mathcal P}:EM^V\longrightarrow EM_{U}$ is full and faithful (see \cite[\S 3.7]{MW}). Then, there is an induced morphism $t:V\longrightarrow V'$ of comonads such that $t_{M'}:VM'\longrightarrow V'M'$ is a monomorphism 
for each $M'\in \mathcal C$ (see \cite[$\S$ 3.16]{MW}). For any $(M, f^M)\in EM^{V'}$, we consider the following pullback in $EM^{V'}$ (see, \cite[\S 4.1]{MW})
		\begin{equation}
			\begin{CD}
				\Upsilon(M, f^M) @> {p_2}>>  VM\\
				@V p_1 VV @VV t_{M} V \\
				M @> f^{M}  >>  V'M ~.\\
			\end{CD}
		\end{equation}
		This induces a functor $\Upsilon:EM^{V'}\longrightarrow EM^{V'}$, $(M,f^{M})\mapsto \Upsilon(M, f^M)$. Further, if the comonad $V$ preserves equalizers, then we know from \cite[\S 4.6]{MW} that there is a functor 
		\begin{equation}
			Rat_{\mathcal P}:EM_{U}\xrightarrow{\cong} EM^{V'}\xrightarrow{\Upsilon} EM^{V'} \xrightarrow{\cong} EM_{U},
		\end{equation}
		such that  $Rat_{\mathcal P}(Rat_{\mathcal P}(M, f_M)) = Rat_{\mathcal P}(M, f_M)$ for any object $(M, f_M)\in EM_{U}$. A module $(M,f_{M})$ in $EM_U$ is said to be rational if $Rat_{\mathcal P}(M,f_M) = (M,f_M).$ Let $Rat_{\mathcal P}-U$ denote the full subcategory of rational modules over $U$. Then, an object $(M,f_{M})\in EM_U$ (also treated as $(M,f^M)\in EM^{V'}=EM_U$) lies in  $Rat_{\mathcal P}-U$ if and only if there exists a unique morphism $\mu:M\longrightarrow VM$ such that the following diagram commutes (see \cite[Proposition 4.7]{MW}).
		\begin{equation}
			\begin{tikzcd}
				&    &  {VM}  \arrow{ddr}{t_M} &\\
				&   &   \\[-2ex]
				&   M\arrow{rr}[swap]{f^M} \arrow{uur}{\mu}  & & V'M &\\
			\end{tikzcd}
		\end{equation} 
		Then $Rat_{\mathcal P}-U$ is a coreflective subcategory of $EM_U$, i.e. the inclusion functor $\iota_{\mathcal P}:Rat_{\mathcal P}-U\longrightarrow EM_{U}$ has a right adjoint that we continue to denote by $Rat_{\mathcal P}:EM_U\longrightarrow Rat_{\mathcal P}-U$ and the functor $\Phi_{\mathcal P}:EM^V\longrightarrow EM_{U}$ corestricts to an equivalence of categories $R_{\mathcal P}: EM^V \longrightarrow Rat_{\mathcal P}-U$ (see \cite[Theorem 4.8]{MW}). 
		\begin{thm}\label{P8.2xw}
			Let $U$ be a monad on $\mathcal C$ that is exact and preserves colimits and let $V$ be a comonad on $\mathcal C$ that is exact. Let $V'$ be the right adjoint of the monad $U$. Then, for a rational pairing $\mathcal P=(U, V,\varrho)$ on $\mathcal C$, the full subcategory $Rat_{\mathcal P}-U$ of rational modules over $U$ is closed under coproducts, subobjects and quotients in $EM_U.$
		\end{thm}
		\begin{proof}
			Since $Rat_{\mathcal P}-U$ is a coreflective subcategory of $EM_U$, and $EM_U$ is a Grothendieck category, it  follows from (\cite[Corollary 2]{HS}) that $Rat_{\mathcal P}-U$ is closed under coproducts. 
			
			\smallskip
			We now consider an object $(M,f_M)\in Rat_{\mathcal P}-U$. Then, we know from (\cite[\S 4.6]{MW}) that for any $U$-submodule $(N, f_{N})\xrightarrow{\iota} (M,f_M)$ in $EM_U$, the following diagram is a pullback
				\begin{equation}
				\begin{CD}
				Rat_{\mathcal P}(N, f_N) @> Rat_{\mathcal P}(\iota)>>  Rat_{\mathcal P
			}(M,f_M)\\
					@V \cong VV @VV \cong V \\
					N @> \iota  >>  M~ \\
				\end{CD}
			\end{equation}
			 Therefore, $N$ is rational module over $U$. Accordingly, for any $(M,f_M)\in Rat_{\mathcal P}-U$ and any subobject $(P,f_{P})\subseteq (M,f_M)$ in $EM_U$, we know that $(P,f_{P})\in Rat_{\mathcal P}-U$. Since the coreflective subcategory $Rat_{\mathcal P}-U$  must be closed
under colimits (see \cite[Theorem 1]{HS}), it is clear that the quotient $(M,f_M)/(P,f_P) \in Rat_{\mathcal P}-U$. 
		\end{proof}
		
		\smallskip
		
		We now define a rational pairing between a monad quiver $\mathscr U:\mathbb Q^{op}\longrightarrow Mnd(\mathcal{C})$ and a comonad quiver $\mathscr V:\mathbb Q\longrightarrow Cmd(\mathcal{C})$. We assume that for each $x\in\mathbb V,$ the monad $\mathscr U_x$ is exact and preserves colimits and the comonad $\mathscr V_x$ is exact. 
		\begin{defn}\label{D10.2}
			Let $\mathbb Q=(\mathbb V,\mathbb E)$ be a quiver. Let $\mathscr U:\mathbb Q^{op}\longrightarrow Mnd(\mathcal{C})$ be a monad quiver and let $\mathscr V:\mathbb Q\longrightarrow Cmd(\mathcal{C})$ be a   comonad quiver. For each vertex $x\in \mathbb V$, let $\mathscr V_x'$ denote the right adjoint of $\mathscr U_x$. A rational pairing $\mathcal P=(\mathscr U, \mathscr V, \gamma=\{\gamma_x\}_{x\in \mathbb V})$ between $\mathscr U$ and $\mathscr V$ is a triple such that 
		
		\smallskip
	(1)	for each $x\in \mathbb V$,  $\mathcal P_x:=(\mathscr U_x, \mathscr V_x, \gamma_x)$  is a rational pairing between the monad $\mathscr U_x$ and the comonad $\mathscr V_x$
				
				\smallskip
				(2) for each edge $\psi\in \mathbb Q(x,y)=\mathbb Q^{op}(y,x)$  the following diagram commutes
				\begin{equation}\label{eq10.2}
\begin{CD}
\mathscr V_x@>\mathscr V(\psi)>>\mathscr V_y\\
@VVV @VVV\\
\mathscr V'_x@>\mathscr V'(\psi)>>\mathscr V'_y\\
\end{CD}
				\end{equation}
			where the vertical morphisms $\mathscr V_x\longrightarrow \mathscr V_x'$ and $
\mathscr V_y\longrightarrow \mathscr V'_y$  in \eqref{eq10.2} are natural transformations of functors induced by the pairings $\mathcal P_x$ and $\mathcal P_y$ respectively. 
		\end{defn}
		
		We can now define  rational cis-modules (resp. rational trans-modules) in the setup of Section 4.1 (resp. Section 4.2). 
		\begin{defn}\label{D10.3}
			Let $\mathcal P=(\mathscr U, \mathscr V, \gamma)$ be a rational pairing between a monad quiver $\mathscr U:\mathbb Q^{op}\longrightarrow Mnd(\mathcal{C})$ and a comonad quiver $\mathscr V:\mathbb Q\longrightarrow Cmd(\mathcal{C})$. We say that a module $\mathscr M\in Mod^{cs}-\mathscr U$ (or $Mod^{tr}-\mathscr U$) is rational if $Rat_{\mathcal P_x}(\mathscr M_x)=\mathscr M_x$ for each $x\in\mathbb V$. We let $Rat_{\mathcal P}^{cs}-\mathscr U$(resp. $Rat_{\mathcal P}^{tr}-\mathscr U$) denote the full subcategory of rational cis-modules (resp. rational trans-modules) over $\mathscr U.$ 
		\end{defn}
		 
			\begin{Thm}\label{T10.4}
			Let $\mathcal P=(\mathscr U, \mathscr V, \gamma)$ be a rational pairing between a monad quiver $\mathscr U:\mathbb Q^{op}\longrightarrow Mnd(\mathcal{C})$ and a comonad quiver $\mathscr V:\mathbb Q\longrightarrow Cmd(\mathcal{C})$. Then,
			
			\smallskip
			(1) the inclusion functor $\iota_{\mathcal P}^{tr} :Rat_{\mathcal P}^{tr}-\mathscr U \longrightarrow Mod^{tr}-\mathscr U$ has a right adjoint $Rat_{\mathcal P}^{tr}: Mod^{tr}-\mathscr U\longrightarrow Rat_{\mathcal P}^{tr}-\mathscr U$. 
			 
		\smallskip
		(2) $\mathscr M\in\ Mod^{tr}-\mathscr U$ is rational if and only if $Rat_{\mathcal P}^{tr}(\mathscr M)= \mathscr M.$
		\end{Thm}
		\begin{proof}
			
			\smallskip
		(1)	For any object $\mathscr M\in  Mod^{tr}-\mathscr U$ and $y\in\mathbb V$, we set
			\begin{equation}\label{eq10.3}
				Rat_{\mathcal{P}}^{tr}(\mathscr M)_y:= \underset{N'\in \mathfrak R_{\mathcal{P}}^{tr}(\mathscr M_y)} \sum N',
			\end{equation}
			where $ \mathfrak R_{\mathcal{P}}^{tr}(\mathscr M_y):=\{N'\overset{i}{\hookrightarrow} \mathscr M_y~|~ Im({^\psi \mathscr M}\circ 
			\psi_*(i))  \subseteq Rat_{\mathcal P_x}(\mathscr M_x)~\textup{for all}~\psi\in\mathbb Q^{op}(x,y)\}$. We note in particular that $Rat_{\mathcal{P}}^{tr}(\mathscr M)_y\subseteq Rat_{\mathcal{P}_y}(\mathscr M_y).$  Hence, $Rat_{\mathcal{P}}^{tr}(\mathscr M)_y \in\ Rat_{\mathcal P_y}-\mathscr U_y$.  For any edge $\phi\in \mathbb Q^{op}(y,z)$ and any subobject $N''\overset{i'}{\hookrightarrow}\mathscr M_z$ in $\mathfrak R_{\mathcal P}^{tr}(\mathscr M_z)$, we must have $Im(^{\phi\psi}\mathscr M\circ (\phi\psi)_{\ast}(i')) \in Rat_{\mathcal P_x}(\mathscr M_x)$. Since $^{\phi\psi}\mathscr M = {^\psi\mathscr M}\circ \psi_{\ast}(^{\phi}\mathscr M)$, we get $Im(^\psi \mathscr M\circ \psi_{\ast}(^{\phi}\mathscr M\circ \phi_{\ast}(i')))\in Rat_{\mathcal P_x}(\mathscr M_x)$. It follows that $Im(^\phi\mathscr M\circ \phi_{\ast}(i'))\in \mathfrak R^{tr}_{\mathcal P}(\mathscr M_y)$. As the restriction functor $\phi_{\ast}$ is exact and preserves colimits, it follows that the morphism $^\phi\mathscr M: \phi_{\ast}\mathscr M_z\longrightarrow \mathscr M_y$ restricts to a morphism $^\phi Rat_{\mathcal{P}}^{tr}(\mathscr M): \phi_{\ast}Rat_{\mathcal{P}}^{tr}(\mathscr M)_z\longrightarrow  Rat_{\mathcal{P}}^{tr}(\mathscr M)_y$. 
Therefore, the subobjects $Rat_{\mathcal{P}}^{tr}(\mathscr M)_y$ as in \eqref{eq10.3} together determine an object $Rat_{\mathcal P}^{tr}(\mathscr{M})\in Rat_{\mathcal P}^{tr}-\mathscr U.$

			\smallskip
			
			We now consider an object $\mathscr P\in Rat_{\mathcal P}^{tr}-\mathscr U$ and a morphism $\zeta:\iota_{\mathcal P}^{tr}(\mathscr P)\longrightarrow \mathscr M$ in $Mod^{tr}-\mathscr U$. To prove that the functor $Rat_{\mathcal P}^{tr}$ is right adjoint to  $\iota_{\mathcal P}^{tr}$, it suffices to show that $Im(\zeta_y)\subseteq Rat_{\mathcal P}^{tr}(\mathscr M)_y$ for each $y\in \mathbb V$.
			For each $\psi\in \mathbb Q^{op}(x,y),$ we consider the following commutative diagram in $EM_{\mathscr U_x}$
			\begin{equation}
				\begin{CD}
					\mathscr \psi_{*}\mathscr{P}_y @> {\psi_{*}(\zeta_y)}>>  \psi_{*}\mathscr M_y\\
					@V ^{\psi} \mathscr P VV @VV ^{\psi}\mathscr{M} V \\
					\mathscr{P}_x @>\zeta_x  >>  \mathscr M_x ~.\\
				\end{CD}
			\end{equation}
			Since, $\mathscr P\in Rat_{\mathcal P}^{tr}-\mathscr U$, $Rat_{\mathcal P_x}(\mathscr P_x) = \mathscr P_x$. Therefore, $Im(\zeta_{x})\subseteq Rat_{\mathcal P_x}(\mathscr M_x).$ Further, as $^\psi \mathscr M\circ \psi_{*}(\zeta_y) = \zeta_x\circ~ ^{\psi} \mathscr P,$ it follows that $Im(^\psi\mathscr M\circ \psi_{*}(\zeta_y))\subseteq Rat_{\mathcal P_x}(\mathscr M_x)$ for each edge $\psi\in \mathbb Q^{op}(x,y).$ Therefore, $Im(\zeta_y)\subseteq Rat_{\mathcal P}^{tr}(\mathscr M)_y.$ This proves the result.
			
			\smallskip
			(2) Consider an object $\mathscr M\in Rat_{\mathcal P}^{tr}-\mathscr U$. Then, $Rat_{\mathcal P_x}(\mathscr M_x)=\mathscr M_x$ for each $x\in\mathbb V$. Therefore, from the definition in (\ref{eq10.3}), we get $Rat_{\mathcal P}^{tr}(\mathscr M)_y= \underset{N'\subseteq \mathscr M_y} \sum \mathscr N' = \mathscr M_y$. Hence, $Rat_{\mathcal P}^{tr}(\mathscr M)= \mathscr M.$ Conversely, if we have $\mathscr M = Rat_{\mathcal P}^{tr}(\mathscr M)$  for some $\mathscr M\in Mod^{tr}-\mathscr U$, then $\mathscr M_x = Rat_{\mathcal P}^{tr}(\mathscr M)_x\subseteq Rat_{\mathcal P_x}(\mathscr M_x)$ for each $x\in\mathbb V$. Therefore, $\mathscr M$ is a rational module in $Mod^{tr}-\mathscr U$.
		\end{proof}

		\begin{Thm}\label{T10.6}
			Let $\mathcal P=(\mathscr U, \mathscr V, \gamma)$ be a rational pairing. Then, $Com^{cs}-\mathscr V\cong Rat^{tr}_{\mathcal P}-\mathscr U.$    
		\end{Thm}
		\begin{proof}
			Let $\mathscr M\in Rat_{\mathcal P}^{tr}-\mathscr U.$ Then, $\mathscr M_x= Rat_{\mathscr P_x}(\mathscr M_x)\in Rat_{\mathcal P_x}-\mathscr U_x$ for each $x\in\mathbb V$. Since $EM^{\mathscr V_x}\cong Rat_{\mathcal P_x}-\mathscr U_x$ for each $x\in\mathbb V$ (see \cite[Theorem 4.8]{MW}), each $\mathscr M_x$ may be treated as an object in $EM^{\mathscr V_x}.$ For each edge $\psi\in \mathbb Q(x,y)=\mathbb Q^{op}(y,x),$ we have the morphism $^\psi\mathscr M:\psi_{*}\mathscr M_x\longrightarrow \mathscr M_y$ in $EM_{\mathscr U_y}$, the morphism $\mathscr V(\psi):
\mathscr  V_x\longrightarrow \mathscr V_y$ of comonads, and the morphism $\mathscr U(\psi):\mathscr U_y\longrightarrow\mathscr U_x$ of monads. In order to prove that $\mathscr M\in Com^{cs}-\mathscr V$, we must show that the underlying morphism $\mathscr M_x\longrightarrow \mathscr M_y$ in $\mathcal C$ is a morphism of $\mathscr V_y$-comodules. In other words, we must show that the following diagram commutes
\begin{equation}\label{qqq1x}
\begin{tikzcd}
\mathscr M_x\arrow{r}\arrow{d}&\mathscr V_x\mathscr M_x \arrow{r}&\mathscr V_y\mathscr M_x\arrow{d} \\
\mathscr M_y \arrow{rr}&& \mathscr V_y\mathscr M_y
\end{tikzcd}
\end{equation}
As in Definition \ref{D10.2}, let $\mathscr V_x'$ denote the right adjoint of $\mathscr U_x$ and let $\mathscr V'_y$ denote the right adjoint of 
$\mathscr U_y$.  Using condition (2) in Definition \ref{D10.2}, we see that composing with $\mathscr V_y\mathscr M_y\longrightarrow \mathscr V_y'\mathscr M_y$ transforms \eqref{qqq1x} into the diagram 
\begin{equation}\label{qqq2x}
\begin{tikzcd}
\mathscr M_x\arrow{rr}\arrow{d}&&\mathscr M_y\arrow{d} \\
\mathscr V'_x\mathscr M_x \arrow{r}&\mathscr V'_y\mathscr M_x\arrow{r}& \mathscr V_y'\mathscr M_y
\end{tikzcd}
\end{equation} The latter diagram \eqref{qqq2x} commutes because $\mathscr M_x\longrightarrow \mathscr M_y$ in $\mathcal C$ is a morphism of 
$\mathscr U_y$-modules (and hence a morphism of $\mathscr V'_y$-comodules). Since $\mathcal P_y$ is a rational pairing, we note that 
$\mathscr V_y\mathscr M_y\longrightarrow \mathscr V_y'\mathscr M_y$  is a monomorphism. Hence, the diagram \eqref{qqq1x} commutes. 
			
			\smallskip
			
			Conversely, suppose that $\mathscr N\in Com^{cs}-\mathscr V.$ Then, for each $x\in\mathbb V$, $\mathscr N_x$ may be treated as a rational module in $EM_{\mathscr U_x}$. Further, for each edge $\psi\in \mathbb Q(x,y)=\mathbb Q^{op}(y,x),$ the morphism $\mathscr N^{\psi}: \psi^{\circ}\mathscr N_x\longrightarrow \mathscr N_y$ in $EM^{\mathscr V_y}$ may be treated as a map $\mathscr N_x\longrightarrow \mathscr N_y$  of 
$\mathscr U_y$-modules. Hence, $\mathscr N$ corresponds to an object in $Rat^{tr}_{\mathcal P}-\mathscr U.$ 
		\end{proof}
		
		\smallskip
		
		We now recall (see, for instance, \cite[$\S$ 1.1]{BR}) that for an abelian category $\mathcal D$, a torsion theory on $\mathcal D$ consists of a pair $(\mathcal T, \mathcal F)$ of
		strict and full subcategories $\mathcal T$ and $\mathcal F$ such that $\mathcal D(X, Y) = 0$ for any $X \in \mathcal T$, $Y \in \mathcal F$ and for any $Z \in \mathcal D,$ one has a short exact sequence
		\begin{equation*}
			0\longrightarrow Z^{\mathcal T}\longrightarrow Z\longrightarrow Z^{\mathcal F}\longrightarrow 0,
		\end{equation*}
		with $Z^{\mathcal T}\in \mathcal T$ and $Z^{\mathcal F}\in \mathcal F$.
		The torsion pair $(\mathcal T, \mathcal F)$ is said to be hereditary if the torsion class $\mathcal T$ is
		closed under subobjects. We shall now give conditions for the category $Rat_{\mathcal P}-U$ of rational modules over the monad $U$ to be a torsion class in $EM_U,$ generalizing the result (see \cite{Lin}) for rational pairings of coalgebras and algebras.
		\begin{lem}\label{L10.7} 
			Let $U$ be a monad on $\mathcal C$ that is exact and preserves colimits and let $V$ be a comonad on $\mathcal C$ that is exact. Then, for a rational pairing $\mathcal P=(U, V,\varrho)$ on $\mathcal C$, $Rat_{\mathcal P}-U$ is a torsion class in $EM_U$ if and only if for any object $M\in EM_U,$ $Rat_{\mathcal P}(M/Rat_{\mathcal P}(M))=0$. 
		\end{lem}
		\begin{proof}
			Let $Rat_{\mathcal P}-U$ be a torsion class in $EM_U$. Then, it is clear that $Rat_{\mathcal P}(M/Rat_{\mathcal P}(M)) = 0$. Conversely, suppose that for any object $M\in EM_U$, $Rat_{\mathcal P}(M/Rat_{\mathcal P}(M))=0$. We now consider two full subcategories $\mathcal T$ and $\mathcal F$ of $EM_U$ defined as
			\begin{equation*}
				ob(\mathcal T) := \{ M\in EM_U~|~Rat_{\mathcal P}(M)=M\}\quad \textup{and}\quad ob(\mathcal F) := \{ M\in EM_U~|~Rat_{\mathcal P}(M)=0\}.
			\end{equation*}
			Since $(\iota_{\mathcal P}, Rat_{\mathcal P})$ is a pair of adjoint functors, we get $EM_{U}(M,N)=Rat_{\mathcal P}-U(M, Rat_{\mathcal P}(N))=0$ for any $M\in \mathcal T$ and $N\in \mathcal F.$ Further, for any $M\in EM_U$, we consider the following short exact sequence in $EM_{U}$
			\begin{equation*}
				0\longrightarrow Rat_{\mathcal P}(M)\longrightarrow M\longrightarrow M/Rat_{\mathcal P}(M)\longrightarrow 0.
			\end{equation*}
			Since $Rat_{\mathcal P}(Rat_{\mathcal P}(M))= Rat_{\mathcal P}(M)$, $Rat_{\mathcal P}(M)\in \mathcal T$. Further, as $Rat_{\mathcal P}(M/Rat_{\mathcal P}(M))=0$, $Rat_{\mathcal P}(M/Rat_{\mathcal P}(M))\in \mathcal F$. Therefore, $Rat_{\mathcal P}-U$ is a torsion class in $EM_U$. 
		\end{proof}
		 
		\begin{Thm}\label{T10.8}
			Let $\mathcal P=( \mathscr U, \mathscr V, \gamma)$ be a rational pairing. Suppose that for each vertex $x\in\mathbb V$ and $N\in EM_{\mathscr U_x}$, we have $Rat_{\mathcal P_x}(N/Rat_{\mathcal P_x}(N))=0.$ Then, the category $Rat^{tr}_{\mathcal P}-\mathscr{U}$ is a hereditary torsion class in $Mod^{tr}-\mathscr U.$    
		\end{Thm}
		\begin{proof}
		Since $Mod^{tr}-\mathscr U$ is a Grothendieck category by Theorem \ref{T4.13}, it is both complete and cocomplete.	From \cite[\S 1.1]{BR}, it follows that any
			full subcategory of $Mod^{tr}-\mathscr U$ which is closed under quotients, extensions, and coproducts must be
			a torsion class.   For each $x\in\mathbb V$ and $N\in EM_{\mathscr U_x}$, we know that $Rat_{\mathcal P_x}(N/Rat_{\mathcal P_x}(N))=0$. It  follows from Lemma \ref{L10.7} that each $Rat_{\mathcal P_x}-\mathscr U_x$ is a torsion class in $EM_{\mathscr U_x}.$ Suppose we have a short exact sequence 
			$
				0\longrightarrow \mathscr N\longrightarrow \mathscr M\longrightarrow \mathscr P\longrightarrow 0
			$
			in $Mod^{tr}-\mathscr U$. Then, for each $x\in\mathbb V,$ the following sequence
			\begin{equation}\label{eq10101}
				0\longrightarrow \mathscr N_x\longrightarrow \mathscr M_x\longrightarrow \mathscr P_x\longrightarrow 0\notag
			\end{equation}
			is exact in $EM_{\mathscr U_x}$.  Since $Rat_{\mathcal P_x}-\mathscr U_x$ is a torsion class in $EM_{\mathscr U_x},$ it is now clear from 
\eqref{eq10101} that $Rat_{\mathcal P}-U$ is closed under quotients, coproducts and extensions. Hence, $Rat_{\mathcal P}-U$ must be a torsion class 
in $Mod^{tr}-\mathscr U$.  By Proposition \ref{P8.2xw}, each $Rat_{\mathcal P_x}-\mathscr U_x$ is closed under subobjects. It now follows that  $Rat_{\mathcal P}-U$  is also closed under subobjects, and hence is a hereditary torsion class in $Mod^{tr}-\mathscr U$.  
			 
		\end{proof}

		\small
		
		\begin{bibdiv}
			\begin{biblist}
				\bib{AR}{book}{
					author={Ad\'{a}mek, J.},
					author={Rosick\'{y}, J.},
					title={Locally presentable and accessible categories},
					series={London Mathematical Society Lecture Note Series},
					volume={189},
					publisher={Cambridge University Press, Cambridge},
					date={1994},
					pages={xiv+316},
				}
				
				\bib{AZ1}{article}{
					author={Artin, M.},
					author={Zhang, J. J.},
					title={Noncommutative projective schemes},
					journal={Adv. Math.},
					volume={109},
					date={1994},
					number={2},
					pages={228--287},
				}

				\bib{AZ2}{article}{
					author={Artin, M.},
					author={Zhang, J. J.},
					title={Abstract Hilbert schemes},
					journal={Algebr. Represent. Theory},
					volume={4},
					date={2001},
					number={4},
					pages={305--394},
				}
				
				\bib{Ban}{article}{
					author={Banerjee, A.},
					title={Entwined modules over representations of categories},
					journal={Algebr. Represent. Theory},
					volume={26},
					date={2023},
					number={6},
					pages={3185--3221},
				}
				\bib{BBK}{article}{
					author={Balodi, M.},
					author={Banerjee, A.},
					author={Kour, S.},
					title={Comodule theories in Grothendieck categories and relative Hopf objects},
					journal={Journal of Pure and Applied Algebra},
					volume={228},
					date={2024},
					number={6},
					pages = {107607},}
				
				\bib{BBR0}{article}{
					author={Balodi, M.},
					author={Banerjee, A.},
					author={Ray, S.},
					title={Cohomology of modules over $H$-categories and co-$H$-categories},
					journal={Canad. J. Math.},
					volume={72},
					date={2020},
					number={5},
					pages={1352--1385},
				}

				\bib{BBR}{article}{
					author={Balodi, M.},
					author={Banerjee, A.},
					author={Ray, S.},
					title={Categories of modules, comodules and contramodules over representations},
					journal={Forum Math},
					volume={36},
					date={2024},
					number={1},
					pages={111--152},
				}

				\bib{Beke}{article}{
					author={Beke, T.},
					title={Sheafifiable homotopy model categories},
					journal={Math. Proc. Cambridge Philos. Soc.},
					volume={129},
					date={2000},
					number={3},
					pages={447--475},
				}
				\bib{BR}{article}{
					author={Beligiannis, A.},
					author={Reiten, I.},
					title={Homological and homotopical aspects of torsion theories},
					journal={Mem. Amer. Math. Soc.},
					volume={188},
					date={2007},
					number={883},
				}
				
				\bib{BBW}{article}{
					author={Bohm, G.},
					author={Brzezinski, T.},
					author={Wibauser, R.},
					title = {Monads and comonads on module categories},
					journal = {Journal of Algebra},
					volume = {322},
					number = {5},
					pages = {1719-1747},
					year = {2009},
				}

  \bib{BWb}{book}{
   author={Brzezinski, T.},
   author={Wisbauer, R.},
   title={Corings and comodules},
   series={London Mathematical Society Lecture Note Series},
   volume={309},
   publisher={Cambridge University Press, Cambridge},
   date={2003},
   pages={xii+476},
}

				\bib{CMZ}{article}{
					author={Caenepeel, S.},
					author={Militaru, G.},
					author={Zhu, S.},
					title={Doi-Hopf modules, Yetter-Drinfel\cprime d modules and Frobenius type
						properties},
					journal={Trans. Amer. Math. Soc.},
					volume={349},
					date={1997},
					number={11},
					pages={4311--4342},
				}
				
				\bib{CMIZ}{article}{
					author={Caenepeel, S.},
					author={Militaru, G.},
					author={Ion, B.},
					author={Zhu, S.},
					title={Separable functors for the category of Doi-Hopf modules,
						applications},
					journal={Adv. Math.},
					volume={145},
					date={1999},
					number={2},
					pages={239--290},
				}
				
				\bib{CG}{article}{
					author={Caenepeel, S.},
					author={Gu\'{e}d\'{e}non, T.},
					title={On the cohomology of relative Hopf modules},
					journal={Comm. Algebra},
					volume={33},
					date={2005},
					number={11},
					pages={4011--4034},
				}
				\bib{DNR}{book}{
					author={D$\breve{a}$sc$\breve{a}$lescu, S.},
					author={N$\breve{a}$st$\breve{a}$sescu, C.},
					author={Raianu, S.},
					title={Hopf algebras. An Introduction},
					series={Monographs and Textbooks in Pure and Applied Mathematics, Vol. 235},
					publisher={Marcel Dekker, Inc., New York},
					date={2001},
				}
				\bib{f13}{article}{
					author={Di, Z.},
					author={Estrada, S.},
					author={Liang, L.},
					author={Odaba\c{s}\i , S.},
					title={Gorenstein flat representations of left rooted quivers},
					journal={J. Algebra},
					volume={584},
					date={2021},
					pages={180--214},
				}
				
				\bib{EE}{article}{
					author={Enochs, E.},
					author={Estrada, S.},
					title={Relative homological algebra in the category of quasi-coherent
						sheaves},
					journal={Adv. Math.},
					volume={194},
					date={2005},
					number={2},
					pages={284--295},
				}
				
				\bib{f16}{article}{
					author={Enochs, E.},
					author={Estrada, S.},
					title={Projective representations of quivers},
					journal={Comm. Algebra},
					volume={33},
					date={2005},
					number={10},
					pages={3467--3478},
				}
				
				\bib{f17}{article}{
					author={Enochs, E.},
					author={Estrada, S.},
					author={Garc\'{\i}a Rozas, J. R.},
					title={Injective representations of infinite quivers. Applications},
					journal={Canad. J. Math.},
					volume={61},
					date={2009},
					number={2},
					pages={315--335},
				}
				
				\bib{EV}{article}{
					author={Estrada, S.},
					author={Virili, S.},
					title={Cartesian modules over representations of small categories},
					journal={Adv. Math.},
					volume={310},
					date={2017},
					pages={557--609},
				}

				\bib{Et}{book}{
					author={Etingof, P.},
					author={Gelaki, S.},
					author={Nikshych, D.},
					author={Ostrik, V.},
					title={Tensor categories},
					series={Mathematical Surveys and Monographs},
					volume={205},
					publisher={American Mathematical Society, Providence, RI},
					date={2015},
				}

\bib{Gab}{article}{
   author={Gabriel, P.},
   title={Des cat\'{e}gories ab\'{e}liennes},
   journal={Bull. Soc. Math. France},
   volume={90},
   date={1962},
   pages={323--448},
}

\bib{GS0}{article}{
 author={Gerstenhaber, M.},
 author={Schack, S. D.},
 book={
 title={Algebraists' homage: papers in ring theory and related topics. Proceedings of the Conference on Algebra in Honor of Nathan Jacobson, held in New Haven, Connecticut, June 2-5, 1981},
 },
 title={On the deformation of diagrams of algebras},
 pages={193--197},
 date={1982},
}

\bib{GS1}{article}{
 author={Gerstenhaber, M.},
 author={Schack, S. D.},
 title={Algebraic cohomology and deformation theory},
journal={Nato ASI Ser., Ser. C},
volume={247},
pages={11--264},
 date={1988},
}

\bib{GS2}{article}{
 author={Gerstenhaber, M.},
 author={Schack, S. D.},
 title={The cohomology of presheaves of algebras. I: Presheaves over a partially ordered set},
 journal={Transactions of the American Mathematical Society},
 volume={310},
 number={1},
 pages={135--165},
 date={1988},
}

\bib{GGS}{article}{
 author={Gerstenhaber, M.},
 author={Giaquinto, A.},
 author={Schack, S. D.},
 title={Diagrams of Lie algebras.},
 journal={Journal of Pure and Applied Algebra},
 volume={196},
 number={2-3},
 pages={169--184},
 date={2005},
}

				\bib{Hod}{book}{
					author={Hodges, W.},
					title={A shorter model theory},
					publisher={Cambridge University Press, Cambridge},
					date={1997},
					pages={x+310},
				}
				\bib{HS}{article} {
					author={Herrlich, H.},
					author={Strecker, G. E.},
					title={ Coreflective Subcategories},
					journal={Trans. Amer. Math. Soc.},
					volume = {157},
					pages = {3205-226},
					date = {1971},
				}

				\bib{Ill}{article}{
					author={Illusie, L.},
					title={Existence de r\'{e}solutions globales},
					series={Lecture Notes in Mathematics, Vol. 225},
					note={Th\'{e}orie des intersections et th\'{e}or\`eme de Riemann-Roch, S\'{e}minaire de G\'{e}om\'{e}trie Alg\'{e}brique du Bois-Marie 1966--1967 (SGA 6);
						Dirig\'{e} par P. Berthelot, A. Grothendieck et L. Illusie. Avec la
						collaboration de D. Ferrand, J. P. Jouanolou, O. Jussila, S. Kleiman, M.
						Raynaud et J. P. Serre},
					publisher={Springer-Verlag, Berlin-New York, 1971},
				}

\bib{Lin}{article}{
					author={Lin, B. I.-P},
					title = {Semiperfect coalgebras},
					journal = {Journal of Algebra},
					volume = {49},
					pages = {357--373},
					year = {1977},
				}
				
				\bib{Lint}{article}{
					author={Linton, F. E. J.},
					title={Coequalizers in categories of algebras},
					conference={
						title={Sem. on Triples and Categorical Homology Theory (ETH, Z\"{u}rich,
							1966/67)},
					},
					book={
						publisher={Springer, Berlin},
					},
					date={1969},
					pages={75--90},
				}

\bib{LVb}{article}{
 author={Lowen, W.},
 author={Van den Bergh, M.},
 title={A Hochschild cohomology comparison theorem for prestacks.},
 journal={Transactions of the American Mathematical Society},
 volume={363},
 number={2},
 pages={969--986},
 date={2011},
}

\bib{WLS}{article}{
   author={Lowen, W.},
   author={Ramos Gonz\'{a}lez, J.},
   author={Shoikhet, B.},
   title={On the tensor product of linear sites and Grothendieck categories},
   journal={Int. Math. Res. Not. IMRN},
   date={2018},
   number={21},
   pages={6698--6736},
}
				
				\bib{Mac}{book}{
					author={MacLane, S.},
					title={Categories for the working mathematician},
					series={Graduate Texts in Mathematics, Vol. 5},
					publisher={Springer-Verlag, New York-Berlin},
					date={1971},
				}

				\bib{Gro}{article}{
					author={Grothendieck, A.},
					title={Sur quelques points d’alg`ebre homologique},
					journal={Tohoku Math. J.(2)},
					volume={9},
					date={1957},
					pages={119--221},
				}	
				\bib{KS}{book}{
					author={Kashiwara, M.},
					author={Schapira, P.},
					title={Categories and Sheaves},
					publisher={Springer-Verlag, Berlin-Heidelberg},
					date={2006},
				}
				\bib{MW}{article}{
					author={Mesablishvili, B.},
					author={ Wisbauer, R.},
					title={On Rational Pairings of Functors},
					journal={Appl Categor Struct},
					volume={21},
					date={2013},
					pages={249--290},
				}
				\bib{SY}{article}{
					author={Santana, A.P.},
					author={Yudin, I.},
					title={Perfect category-graded algebras},
					journal={Comm. Algebra},
					volume={40},
					date={2012},
					number={1},
					pages={157--172},
				}
				\bib{RS}{article}{
					author = {Street, R.},
					title = {Frobenius monads and pseudomonoids},
					journal = {Journal of Mathematical Physics},
					volume = {45},
					date = {2004},
					number = {10},
					pages = {3930-3948},
				}
				
				\bib{Stacks}{article}{
					author={The Stacks project},
					title={Available online, },
					journal={https://stacks.math.columbia.edu/},
				}
			\end{biblist}
			
		\end{bibdiv}

	\end{document}